\documentclass[preprint,12pt]{elsarticle}

\usepackage{amsmath,amssymb,amsthm,mathtools,mathrsfs,bm}
\usepackage{enumitem}
\usepackage{hyperref}

\biboptions{numbers,sort&compress}

\hypersetup{colorlinks=true,linkcolor=blue,citecolor=blue,urlcolor=blue}

\theoremstyle{plain}
\newtheorem{theorem}{Theorem}[section]
\newtheorem{proposition}[theorem]{Proposition}
\newtheorem{lemma}[theorem]{Lemma}

\theoremstyle{definition}

\newtheorem{assumption}[theorem]{Assumption}
\newtheorem{remark}[theorem]{Remark}

\newcommand{\one}{\mathbf{1}}
\newcommand{\Lip}{\operatorname{Lip}}
\newcommand{\supp}{\operatorname{supp}}
\newcommand{\Law}{\mathcal{L}}
\newcommand{\Risk}{\mathcal{R}}
\newcommand{\EmpRisk}{\widehat{\mathcal{R}}}
\newcommand{\ClassF}{\mathcal{F}}
\newcommand{\ClassG}{\mathcal{G}}
\newcommand{\ClassB}{\mathcal{B}}
\newcommand{\ClassQ}{\mathcal{Q}}
\newcommand{\NN}{\mathcal{N}}
\newcommand{\norminf}[1]{\lVert #1 \rVert_\infty}
\newcommand{\norm}[1]{\lVert #1 \rVert}
\newcommand{\abs}[1]{\lvert #1 \rvert}
\newcommand{\bbR}{\mathbb{R}}
\newcommand{\bbE}{\mathbb{E}}
\newcommand{\bbP}{\mathbb{P}}
\newcommand{\bbN}{\mathbb{N}}
\newcommand{\ClassGb}{\overline{\mathcal{G}}}

\begin{document}

\begin{frontmatter}

\title{Drift Estimation for a Multi-Dimensional L\'evy-Driven
Stochastic Differential Equation Using Deep Neural Networks}

\author{Taisei Noguchi}
\author{Teppei Ogihara}
\address{Graduate School of Information Science and Technology,
The University of Tokyo, Tokyo, Japan}
\begin{abstract}
We develop a non-asymptotic theory for nonparametric drift estimation in discretely observed multi-dimensional L\'evy-driven stochastic differential equations using sparse ReLU neural networks. Under exponential \(\beta\)-mixing and an exponential-tail condition on the L\'evy measure, we establish an oracle inequality for the least-squares estimator. The jump component changes both the martingale structure and the concentration regime relative to diffusion models: continuous martingales are replaced by discontinuous martingales, while the relevant empirical-process fluctuations are sub-exponential rather than sub-Gaussian. We handle these difficulties without truncating the observed increments, using an exponential-supermartingale argument for compensated Poisson integrals and \(\psi_1\)-chaining. Despite the weaker concentration, the resulting statistical bound is, up to constants, of the same order as the corresponding diffusion bound. For drift functions with hierarchical compositional structure, this yields intrinsic-dimensional convergence rates. The framework allows infinite jump activity and, in some cases, infinite variation. Finally, we establish a minimax lower bound on a fixed non-trivial compound-Poisson submodel. Together with the upper bound, this shows that the estimator is minimax optimal up to logarithmic factors.
\end{abstract}

\begin{keyword}
compositional structure \sep
deep neural networks \sep
L\'evy-driven stochastic differential equations \sep
minimax optimality \sep
nonparametric drift estimation \sep
oracle inequality
\end{keyword}

\end{frontmatter}

\section{Introduction}

Let us consider a \(d\)-dimensional c\`adl\`ag process
\(X=(X_t)_{t\ge0}\) satisfying the following L\'evy-driven stochastic
differential equation:
\begin{equation}\label{eq:model}
\begin{aligned}
X_t
={}&
\eta
+\int_0^t b(X_{s-})\,ds
+\int_0^t \Sigma(X_{s-})\,dW_s  \\
&\quad
+\int_0^t\int_{\bbR^d\setminus\{0\}}
\Xi(X_{s-})z\,\widetilde N(ds,dz),
\qquad t\ge0.
\end{aligned}
\end{equation}
Here
\(b:\bbR^d\to\bbR^d\) is the drift function,
\(\Sigma:\bbR^d\to\bbR^{d\times d}\) is the diffusion coefficient,
and \(\Xi:\bbR^d\to\bbR^{d\times d}\) is the jump coefficient.
Moreover, \(\eta\) is a \(d\)-dimensional random vector,
\(W=(W_t)_{t\ge0}\) is a \(d\)-dimensional standard Wiener process,
and \(N\) is a Poisson random measure on
\(
[0,\infty)\times\bigl(\bbR^d\setminus\{0\}\bigr)
\)
with intensity measure \(ds\,\nu(dz)\), where \(\nu\) is a L\'evy
measure on \(\bbR^d\setminus\{0\}\). We write
\(
\widetilde N(ds,dz):=N(ds,dz)-ds\,\nu(dz)
\)
for the associated compensated Poisson random measure, and assume that
\(W\), \(N\), and \(\eta\) are mutually independent. Our aim is to
estimate the drift function \(b\) nonparametrically from the discrete
observations of \(X\).

We observe a single trajectory of \(X\) at the equidistant times
\(
    0,\Delta,\ldots,n\Delta,
\)
and primarily have in mind the high-frequency and long-term regime
\(
    \Delta=\Delta_n\to0,
    \qquad
    n\Delta\to\infty.
\)
For a fixed \(i\in\{1,\ldots,d\}\), we estimate the \(i\)-th component
of the drift on a compact region of the state space.
After an affine rescaling, we take this region to be \([0,1]^d\) and set
\(
    f_0:=b^i\one_{[0,1]^d}.
\)
Since the components can be estimated separately, the resulting
estimators can be combined to recover the vector-valued drift \(b\).
Although the above asymptotic regime motivates our analysis, our principal
oracle bounds are non-asymptotic.

Nonparametric drift estimation for diffusion processes has been studied
extensively. In the one-dimensional setting, various estimation methods have
been developed under continuous, high-frequency discrete, and low-frequency
discrete observation schemes; see, for example,
Dalalyan~\cite{dal05},
Comte et al.~\cite{com-etal07}, and
Gobet et al.~\cite{gob-etal04}.
The theory has also been extended to multi-dimensional diffusions,
particularly when a continuous record of the process is available; see
Dalalyan and Rei{\ss}~\cite{dal-rei07} and
Nickl and Ray~\cite{nic-ray20}.
More recently, Oga and Koike~\cite{oga-koi24} introduced deep neural
networks into nonparametric drift estimation for discretely observed
multi-dimensional diffusions; we discuss their results in more detail below.

In contrast, despite the importance of L\'evy-driven and jump-diffusion models in finance and other applied fields, the literature on nonparametric drift estimation for such models is relatively limited.
For a discretely observed one-dimensional jump diffusion,
Schmisser~\cite{sch14} constructed adaptive penalized
least-squares estimators based on linear approximation spaces.
The paper studied both a non-truncated estimator under a sub-exponential
tail condition and a truncated estimator designed to accommodate more
general small-jump activity.
Mies~\cite{mie21} considered one-dimensional Markovian semimartingales
and developed pointwise estimators for the drift and state-dependent jump
activity, using bounded transformations and smooth jump filtering to handle,
under symmetry conditions, possibly infinite-variation and
infinite-variance jumps.
For multi-dimensional jump diffusions, Dexheimer~\cite{dex25}
constructed adaptive kernel estimators from continuous observations and
derived convergence rates under the supremum-norm risk.
Neural-network methods have also recently been explored for learning the
coefficients of L\'evy-driven stochastic systems.
Fang et al.~\cite{fan-etal22} developed a deep-learning approach for
systems driven by \(\alpha\)-stable L\'evy noise, while
Ram\'irez-Gonz\'alez and Sun~\cite{ram-sun26} considered neural-network
estimation for SDEs with finite-activity symmetric jumps.
These works are primarily methodological and numerical and do not provide
non-asymptotic statistical risk bounds or convergence-rate theory for the
resulting neural-network drift estimators.

In parallel, a large body of work has established theoretical advantages of
deep neural networks in nonparametric regression.
Schmidt-Hieber~\cite{sch-hie20} showed that sparse ReLU network
estimators can exploit a compositional structure of the regression function
and attain the corresponding minimax rate up to logarithmic factors.
Kohler and Langer~\cite{koh-lan21} obtained related convergence
rates for fully connected deep neural networks under hierarchical
composition models.
Related theoretical results further highlight the advantages of deep neural networks over conventional linear estimators. Suzuki~\cite{suz19} established the adaptivity of deep ReLU networks for Besov and mixed-smoothness classes, while Imaizumi and Fukumizu~\cite{ima-fuk19} showed that deep neural networks can effectively estimate piecewise smooth functions with non-smooth boundaries. These results show that deep neural networks can exploit low-dimensional or spatially inhomogeneous structures that are not efficiently captured by standard linear methods. Their analyses, however, are primarily developed
for independent regression data.

Oga and Koike~\cite{oga-koi24} connected this DNN regression theory with
nonparametric inference for stochastic differential equations.
For discretely observed multi-dimensional diffusion processes, they
established an oracle inequality for DNN least-squares estimators and showed
that, under a compositional assumption on the drift, the estimators attain
the corresponding minimax rate up to logarithmic factors.
The proof, however, relies on a substantially different noise argument from
that of Schmidt-Hieber~\cite{sch-hie20}: while the latter exploits the
conditional Gaussian structure of the regression noise, the diffusion
setting is handled through a self-normalized inequality for continuous
local martingales.

This approach does not extend directly to L\'evy-driven models.
As pointed out by Oga and Koike~\cite[Remark~2.6]{oga-koi24}, a self-normalized
extension to discontinuous martingales introduces the realized squared-jump
term \(\sum_{s\le T}(\Delta M_s)^2\), for which the pathwise control used
in the diffusion proof is generally unavailable.
Thus, neither the Gaussian argument underlying the regression theory nor
the continuous-martingale argument developed for diffusions directly
handles the sub-exponential jump fluctuations considered here.
The L\'evy-driven extension was consequently left open, with truncation
suggested as a possible route under sufficiently light-tailed jumps.

We resolve this difficulty without truncating the observed increments or
modifying the least-squares criterion.
Instead of extending the self-normalized approach and controlling the
resulting squared-jump term, we construct an exponential-supermartingale
bound directly for the compensated Poisson integral.
The jump component further requires a different empirical-process control:
its fluctuations are sub-exponential rather than sub-Gaussian, and we
control the corresponding local oscillations over the network class by a
\(\psi_1\)-chaining argument. Viewed in this way, our analysis indicates that the structural adaptivity of deep neural networks highlighted by Schmidt-Hieber~\cite{sch-hie20} is not confined to independent regression, but can also be retained in a dependent stochastic-process setting with sub-exponential jump fluctuations.

These arguments yield a non-asymptotic oracle inequality for sparse ReLU
estimators in the multi-dimensional L\'evy-driven model under a
sub-exponential tail condition.
Despite the weaker concentration induced by the jump component, the
resulting statistical bound has, up to constants, the same order as the
diffusion bound in~\cite{oga-koi24}.
Under the compositional structure of Schmidt-Hieber~\cite{sch-hie20}, this
gives the corresponding DNN convergence rate without deterioration of the
diffusion-case rate.
The framework is not restricted to finite-activity jump diffusions and
allows L\'evy measures with infinite jump activity and, in particular
cases, infinite variation.

Finally, we establish a matching minimax lower bound on a non-trivial
Gaussian compound-Poisson submodel and verify uniform exponential
\(\beta\)-mixing for this submodel.
Together with the upper bound, this shows that the DNN-based least-squares
estimator remains minimax optimal up to logarithmic factors in a genuinely
jump-driven statistical experiment.

The remainder of this paper is organized as follows.
Section~\ref{sec:assumptions} introduces the assumptions and statistical
framework, and Section~\ref{sec:main-results} presents the main results.
Their proofs are given in the subsequent sections, while
\ref{app:psi1-chaining} provides an auxiliary
\(\psi_1\)-chaining inequality used in the analysis of the jump component.

\paragraph{Notation}
For a vector or matrix \(W\), we write \(\abs{W}\) for the Frobenius norm
(i.e. the Euclidean norm for a vector), \(\abs{W}_\infty\) for the
maximum-entry norm, and \(\abs{W}_0\) for the number of non-zero entries.
For vectors \(x,y\in\bbR^d\), \(\langle x,y\rangle:=x^\top y\) denotes
the inner product. We write
\(\norminf{f}:=\sup_{x\in[0,1]^d}\abs{f(x)}\)
for the supremum norm on \([0,1]^d\), \(\lfloor x\rfloor\) for the largest
integer not exceeding \(x\), and \(\lceil x\rceil\) for the smallest integer
not smaller than \(x\). For a random vector \(\xi\), \(\Law_\xi\) denotes
the law of \(\xi\). We write \(\one_A\) for the indicator function of a set
\(A\). For a c\`adl\`ag process \(X\), \(X_{t-}\) denotes the left limit at
time \(t\).

\section{Assumptions and definitions}
\label{sec:assumptions}

\subsection{Standing assumptions}

We impose the following assumptions.

\begin{assumption}\label{asm:coefficients}
The coefficient functions \(b\), \(\Sigma\), and \(\Xi\) are globally Lipschitz
continuous. Namely, there exist constants
\(C_b', C_\Sigma', C_\Xi'>0\) such that
\[
\abs{b(x)-b(y)} \le C_b'\abs{x-y},\;
\abs{\Sigma(x)-\Sigma(y)} \le C_\Sigma'\abs{x-y},\;
\abs{\Xi(x)-\Xi(y)} \le C_\Xi'\abs{x-y}
\]
for all \(x,y\in\bbR^d\). In addition, the jump coefficient \(\Xi\) is
uniformly bounded on \(\bbR^d\). We write
\[
C_b := \sup_{x\in[0,1]^d}\abs{b(x)}<\infty,
\;
C_\Sigma := \sup_{x\in[0,1]^d}\abs{\Sigma(x)}<\infty,
\;
C_\Xi := \sup_{x\in\bbR^d}\abs{\Xi(x)}<\infty.
\]
\end{assumption}

The global boundedness of the jump coefficient is also commonly imposed
in statistical analyses of jump diffusions; see, for example,
Schmisser~\cite{sch14} and Dexheimer~\cite{dex25}.

Let \((P_t)_{t\ge0}\) be the transition semigroup of \(X\) and set
\(\mu_0:=\Law_\eta\). Following Masuda~\cite{mas07}, we define the
\(\beta\)-mixing coefficient of \(X\) by
\[
\beta_X(t)
:=
\sup_{s\ge0}
\int_{\bbR^d}
\norm{P_t(x,\cdot)-\mu_0P_{s+t}(\cdot)}_{\mathrm{TV}}\,
\mu_0P_s(dx),
\qquad t\ge0,
\]
where \(\norm{\cdot}_{\mathrm{TV}}\) denotes the total variation norm.

\begin{assumption}\label{asm:mixing}
The process \(X\) is exponentially \(\beta\)-mixing, that is, there exist
constants \(C_\beta,C_\beta'>0\) such that
\[
\beta_X(t) \le C_\beta' e^{-C_\beta t},\qquad t>0.
\]
\end{assumption}
\noindent See Remark~\ref{rem:exponential-beta-mixing} for a set of sufficient
conditions for Assumption~\ref{asm:mixing}.

\begin{assumption}\label{asm:sampling}
The sampling scheme satisfies \(\Delta\le 1\) and \(n\Delta\ge 2\).
\end{assumption}

\begin{assumption}\label{asm:levy-tail}
The L\'evy measure \(\nu\) satisfies the following exponential tail
condition: there exist constants \(C_m,\lambda>0\) such that
\[
\nu(\{\abs{z}>r\})\le C_m e^{-\lambda r},
\qquad r\ge 1.
\]
\end{assumption}

\noindent Assumption~\ref{asm:levy-tail} implies that
\(C_\nu:=\int_{\bbR^d\setminus\{0\}}\abs{z}^2\,\nu(dz)<\infty\)
and, more generally, that
\(\Lambda(a):=\int_{\bbR^d\setminus\{0\}}
\abs{z}^2e^{a\abs{z}}\,\nu(dz)<\infty\)
for every \(a\in[0,\lambda)\); see Remark~\ref{rem:Lambda-finite}.

Assumption~\ref{asm:levy-tail} is satisfied, for example, by compound Poisson processes with exponentially decaying jump-size tails, exponentially tempered \(\alpha\)-stable L\'evy processes with \(\alpha\in(0,2)\), including the CGMY process as a prominent one-dimensional example, and normal inverse Gaussian processes; see Rosi\'{n}ski~\cite{ros07}, Carr et al.~\cite{car-etal02}, and Barndorff-Nielsen~\cite{bar98}, respectively, for their definitions and applications. In particular, the assumption allows for infinite jump activity, and even infinite variation. We also note that the jump component in \eqref{eq:model} is written in centered form; for all the examples above, the first moment is finite, and centering the driving L\'evy process leaves its L\'evy measure unchanged, since the resulting deterministic finite-variation term can be absorbed into the drift coefficient \(b\).

\begin{remark}\label{rem:Lambda-finite}
For every \(a\in[0,\lambda)\), the definition of a L\'evy measure gives
\[
\int_{\{0<\abs{z}\le1\}}
\abs{z}^2e^{a\abs{z}}\,\nu(dz)
\le
e^a
\int_{\{0<\abs{z}\le1\}}
\abs{z}^2\,\nu(dz)
<\infty.
\]
For the large-jump part, setting
\(A_k:=\{z\in\bbR^d:k<\abs{z}\le k+1\}\), we obtain
\[
\int_{\{\abs{z}>1\}}
\abs{z}^2e^{a\abs{z}}\,\nu(dz)
\le
C_m e^a
\sum_{k=1}^\infty
(k+1)^2e^{-(\lambda-a)k}
<\infty.
\]
Hence \(\Lambda(a)<\infty\) for every \(a\in[0,\lambda)\); in particular,
\(C_\nu=\Lambda(0)<\infty\).
\end{remark}

\begin{remark}
\label{rem:exponential-beta-mixing}
Assumption~\ref{asm:mixing} is satisfied if, in addition to
Assumptions~\ref{asm:coefficients} and~\ref{asm:levy-tail},
the following conditions hold:
\begin{enumerate}
\item[(i)]
There exist constants \(0<\lambda_-\le\lambda_+<\infty\)
such that
\[
\lambda_-\abs{y}^2
\le
\abs{\Sigma(x)^\top y}^2
\le
\lambda_+\abs{y}^2
\]
for every \(x,y\in\bbR^d\).

\item[(ii)]
There exist constants \(r>0\) and \(M_0>0\) such that
\[
\langle b(x),x\rangle
\le
-r\abs{x}^2
\]
for every \(x\in\bbR^d\) with \(\abs{x}>M_0\).

\item[(iii)]
The initial variable satisfies \(\bbE[\abs{\eta}]<\infty\).
\end{enumerate}

This follows from Masuda~\cite[Theorem~2.2]{mas07}. Indeed, Masuda's Assumption~2(a)
follows from Menozzi, Pesce and Zhang~\cite[Theorem~1.2]{men-etal21}, whereas
Masuda's Assumption~\(3^\ast\) follows from
Masuda~\cite[Lemma~2.4(ii)]{mas07}, applied with
\(q=1\).

The lower non-degeneracy bound in~(i), which is used to verify
Masuda's Assumption~2(a), may be replaced by a suitable
non-degeneracy condition on the jump mechanism; see
Masuda's Assumption~2(b).
\end{remark}

\subsection{Multilayer neural networks}

We use sparse ReLU networks following the framework of
Schmidt-Hieber~\cite{sch-hie20}, localized to the estimation domain
as in Oga and Koike~\cite{oga-koi24}. Let
\(\sigma(x):=\max\{x,0\}\) for \(x\in\bbR\).
For \(\mathbf{v}=(v_1,\ldots,v_r)^\top\in\bbR^r\), define
\[
    \sigma_{\mathbf{v}}(y)
    :=
    \bigl(
        \sigma(y_1-v_1),\ldots,\sigma(y_r-v_r)
    \bigr)^\top,
    \qquad
    y=(y_1,\ldots,y_r)^\top\in\bbR^r.
\]

A network architecture is specified by the number \(L\in\bbN\) of hidden
layers and a width vector
\(\mathbf{p}=(p_0,\ldots,p_{L+1})\in\bbN^{L+2}\).
A neural network with architecture \((L,\mathbf{p})\) is a function
\(g:\bbR^{p_0}\to\bbR^{p_{L+1}}\) of the form
\begin{equation}\label{eq:nn-form}
    g(x)
    =
    W_L\sigma_{\mathbf{v}_L}
    W_{L-1}\sigma_{\mathbf{v}_{L-1}}
    \cdots
    W_1\sigma_{\mathbf{v}_1}W_0x,
    \qquad x\in\bbR^{p_0},
\end{equation}
where \(W_j\in\bbR^{p_{j+1}\times p_j}\) for \(j=0,\ldots,L\) and
\(\mathbf{v}_j\in\bbR^{p_j}\) for \(j=1,\ldots,L\).
We adopt the convention \(\mathbf{v}_0=0\).
Since we estimate real-valued functions on \(\bbR^d\), we always take
\(p_0=d\) and \(p_{L+1}=1\).

For \(s,F\in(0,\infty]\), define
\begin{align}
\ClassF(L,\mathbf{p},s,F)
:=
\Biggl\{
f=g\one_{[0,1]^d}\;:\;&
g \text{ is of the form \eqref{eq:nn-form}}, \notag\\
&
\max_{0\le j\le L}
\left(
    \abs{W_j}_\infty
    \vee
    \abs{\mathbf{v}_j}_\infty
\right)
\le 1, \notag\\
&
\sum_{j=0}^L
\left(
    \abs{W_j}_0+\abs{\mathbf{v}_j}_0
\right)
\le s,
\quad
\norminf{f}\le F
\Biggr\}.
\label{eq:nn-class}
\end{align}
We write
\(
    \ClassF(L,\mathbf{p},s)
    :=
    \ClassF(L,\mathbf{p},s,\infty).
\)

\subsection{Estimator and risks}

Let
\(
    \mathscr D_n:=\sigma(X_{k\Delta}:k=0,\dots,n).
\)
An estimator is a random function
\(\widehat f_n:\Omega\times\bbR^d\to\bbR\) that is
\(\mathscr D_n\otimes\ClassB(\bbR^d)\)-measurable. We assume that the
candidate class \(\ClassF\) is pointwise measurable and that
\(\widehat f_n\) takes values in \(\ClassF\).

Let \(X^i\) denote the \(i\)-th component of \(X\). For
\(k=0,\dots,n-1\), define
\[
    Y_{k\Delta}
    :=
    \frac{X^i_{(k+1)\Delta}-X^i_{k\Delta}}{\Delta}.
\]
As seen from the regression decomposition \eqref{eq:regression},
\(Y_{k\Delta}\) plays the role of a response variable whose regression
function is approximately \(f_0=b^i\one_{[0,1]^d}\). We therefore learn
\(f_0\) by seeking a function \(f\in\ClassF\) with small empirical
least-squares loss
\[
    \ClassQ_n(f)
    :=
    \frac1n\sum_{k=0}^{n-1}
    \bigl(Y_{k\Delta}-f(X_{k\Delta})\bigr)^2.
\]

Since an exact empirical risk minimizer is generally difficult to compute
for a non-convex neural-network class, we allow \(\widehat f_n\) to be an
arbitrary \(\ClassF\)-valued estimator and define its optimization error by
\[
    \Psi_n^{\ClassF}(\widehat f_n)
    :=
    \bbE\left[
        \ClassQ_n(\widehat f_n)
        -
        \inf_{\widetilde f\in\ClassF}\ClassQ_n(\widetilde f)
    \right].
\]
Thus, \(\Psi_n^{\ClassF}(\widehat f_n)\) measures how closely the learning
procedure approaches an exact empirical risk minimizer. For
\(\ClassF=\ClassF(L,\mathbf p,s,F)\), we simply write
\(
    \Psi_n(\widehat f_n)
    :=
    \Psi_n^{\ClassF(L,\mathbf p,s,F)}(\widehat f_n).
\)

Finally, the statistical accuracy of the learned function is measured by
the generalization risk
\[
    \Risk_n(\widehat f_n,f_0)
    :=
    \bbE\left[
        \frac1n\sum_{k=0}^{n-1}
        \bigl(
            \widehat f_n(X'_{k\Delta})
            -
            f_0(X'_{k\Delta})
        \bigr)^2
    \right],
\]
where \(X'\) is an independent copy of \(X\), independent of the observed sample. Hence, \(\ClassQ_n\) is the training criterion,
\(\Psi_n\) measures the optimization error, and \(\Risk_n\) evaluates the
out-of-sample error of the resulting estimator.

\section{Main results}
\label{sec:main-results}

\subsection{Oracle inequality}
Our first main result is the following non-asymptotic oracle inequality,
which separates the optimization and approximation errors from the
stochastic and discretization errors.

\begin{theorem}\label{thm:oracle}
Suppose that Assumptions~\ref{asm:coefficients}--\ref{asm:levy-tail} hold and that \(\norminf{f_0}\le F\) for some \(F\ge 1\). Let \(\widehat f_n\) be any estimator taking values in the class \(\ClassF(L,\mathbf{p},s,F)\) with \(s\ge 2\). Then, for every \(\tau>1\), there exists a constant \(C_\tau>0\), depending only on
\((d,\allowbreak C_b,\allowbreak C_b',\allowbreak C_\Sigma,\allowbreak C_\Sigma',
\allowbreak C_\Xi,\allowbreak C_\Xi',\allowbreak C_\nu,\allowbreak C_m,\allowbreak\lambda,
\allowbreak C_\beta,\allowbreak C_\beta',\allowbreak\tau)\), such that
\[
\begin{aligned}
\Risk_n(\widehat f_n,f_0)
&\le
\tau\Biggl(
\Psi_n(\widehat f_n)
+
\inf_{f\in\ClassF(L,\mathbf{p},s,F)}\norminf{f-f_0}^2
\Biggr)\\
&\quad+
C_\tau F^2
\Biggl(
\frac{s(L\log s+\log(n\Delta))\log(n\Delta)}{n\Delta}
+\Delta
\Biggr).
\end{aligned}
\]
\end{theorem}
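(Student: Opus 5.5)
The plan is to follow the Schmidt-Hieber / Oga--Koike template. The first step is to write the response in regression form,
\[
Y_{k\Delta}=f_0(X_{k\Delta})+\varepsilon_k+R_k \quad\text{on } \{X_{k\Delta}\in[0,1]^d\},
\]
with
\[
\varepsilon_k=\Delta^{-1}\Bigl(\int_{k\Delta}^{(k+1)\Delta}\Sigma^{i\cdot}(X_{s-})\,dW_s+\int_{k\Delta}^{(k+1)\Delta}\!\!\int \Xi^{i\cdot}(X_{s-})z\,\widetilde N(ds,dz)\Bigr),
\qquad
R_k=\Delta^{-1}\int_{k\Delta}^{(k+1)\Delta}\bigl(b^i(X_s)-b^i(X_{k\Delta})\bigr)\,ds.
\]
Using the Lipschitz property of $b$ and standard moment bounds for the SDE (Burkholder--Davis--Gundy with $C_\nu<\infty$), I expect $\bbE[R_k^2\one_{[0,1]^d}(X_{k\Delta})]\lesssim\Delta$. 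Outside $[0,1]^d$ every $f\in\ClassF$ vanishes, so those terms are common to all candidates and cancel in comparisons.

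The second step is the basic inequality. For any $f^*\in\ClassF$,
\[
\ClassQ_n(\widehat f_n)\le\ClassQ_n(f^*)+\bigl(\ClassQ_n(\widehat f_n)-\inf\ClassQ_n\bigr).
\]
Expanding the squares gives
\[
\widehat{\Risk}_n:=\bbE\Bigl[\tfrac1n\sum(\widehat f_n-f_0)^2(X_{k\Delta})\Bigr]\le\Psi_n+\norminf{f^*-f_0}^2+\tfrac2n\bbE\Bigl[\sum(\varepsilon_k+R_k)(\widehat f_n-f^*)(X_{k\Delta})\Bigr].
\]
The $R_k$ term I will handle by Cauchy--Schwarz and $ab\le \delta a^2+b^2/\delta$, producing $\delta\widehat\Risk_n+C F^2\Delta/\delta$. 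To pass between the in-sample risk $\widehat\Risk_n$ and the out-of-sample risk $\Risk_n$, I will use Berbee coupling into $\asymp n\Delta/\log(n\Delta)$ independent blocks, which Assumption~\ref{asm:mixing} makes available. Combined with a Bernstein inequality and a covering of $\ClassF$ in sup norm, this should give
\[
\Risk_n\le(1+\delta)\widehat\Risk_n+C F^2\frac{\log\NN\cdot\log(n\Delta)}{n\Delta},
\qquad
\log\NN(\delta',\ClassF,\norminf{\cdot})\lesssim s(L\log s+\log(n\Delta))
\]
at $\delta'=1/(n\Delta)$, exactly as in Oga--Koike, with the extra $\log(n\Delta)$ coming from the block length. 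Choosing $\delta$ in terms of $\tau$ then yields the factor $\tau$.

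The main obstacle is the martingale term $\frac1n\sum\varepsilon_k g(X_{k\Delta})$ with $g=\widehat f_n-f^*$ random. The Brownian part follows Oga--Koike (self-normalized bound for continuous martingales). For the jump part $M^g_T=\frac1{n\Delta}\int_0^{n\Delta}\!\int g(X_{\lfloor s/\Delta\rfloor\Delta})\Xi^{i\cdot}(X_{s-})z\,\widetilde N$, with $T=n\Delta$, I will use an exponential supermartingale. For $|\theta|\,\norminf{g}C_\Xi<\lambda$, the process
\[
\exp\Bigl(\theta M_t-\int_0^t\!\!\int(e^{\theta h z}-1-\theta hz)\,\nu(dz)\,ds\Bigr)
\]
is a supermartingale, and the compensator is bounded by $\tfrac{\theta^2}{2}\int_0^t h_s^2\,ds\,\Lambda(|\theta|\,|h|_\infty)$. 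This gives a Bernstein-type tail in which the variance term is $\frac1{T}\cdot\frac1n\sum g^2(X_{k\Delta})$ and the scale is $F/T$. That is, the tail is sub-exponential with $\psi_1$-scale $F/T$.

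I will then take a finite $\delta'$-net $\{g_j\}$ and apply a union bound on the net, normalizing by $\sqrt{\widehat\Risk}$-type quantities. The variance part then contributes $\sqrt{\widehat\Risk_n\log\NN/T}$, and the linear part contributes $F\log\NN/T$. Both are absorbed via $2ab\le\delta a^2+b^2/\delta$ into $\delta\widehat\Risk_n+CF^2\log\NN/T$, the same order as the diffusion case. The discretization from $g$ to the nearest net element requires controlling $\sup_{\norminf{g-g'}\le\delta'}|M^g-M^{g'}|$. Linearity of $M^g$ in $g$ alone gives $\norminf{g-g'}$ times the total variation $T^{-1}\int|z|\,N$, which may be infinite when the L\'evy measure has infinite variation. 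So I plan to apply the $\psi_1$-chaining inequality of the Appendix to the local oscillation process, using the same exponential bound for increments with $\psi_1$ and $\psi_2$ metrics proportional to $\norminf{\cdot}$. A Dudley-type integral over the entropy of $\ClassF$ then gives a bound of order $\delta'\log\NN$, which is negligible at $\delta'=1/T$.

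Collecting the estimates, taking the infimum over $f^*$, and absorbing the $\delta\widehat\Risk_n$ terms gives the stated bound with a constant $C_\tau$ depending on the listed parameters.
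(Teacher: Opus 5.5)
Your proposal is correct and is essentially the paper's proof. It uses the same ingredients: the Oga--Koike coupling lemma to pass from $\Risk_n$ to $\EmpRisk_n$; the drift and Brownian terms taken over from Oga--Koike; for the jump term, an exponential supermartingale with a deterministic parameter on a $\delta$-net with $\delta=(n\Delta)^{-1}$; and $\psi_1$-chaining for the local oscillation. The paper implements your union bound by dominating the exponential at the random index $j^*$ by the sum over the net and applying Jensen, which turns the random compensator into $\EmpRisk_n(\widehat f_n,f_0)+4F\delta$ without any self-normalization.

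One small correction to your motivation for chaining. Since $g$ is frozen on each interval, $\abs{M^g-M^{g'}}\le\norminf{g-g'}\,T^{-1}\sum_k\abs{\xi_k}$, with $\xi_k:=\int_{k\Delta}^{(k+1)\Delta}\int\Xi_i(X_{s-})z\,\widetilde N(ds,dz)$, is always finite even under infinite variation. The real obstruction is that this crude bound only gives $C\norminf{g-g'}\Delta^{-1/2}$ in expectation. Also, the single $\psi_1$ metric $\norminf{\cdot}$ (after the $(n\Delta)^{-1/2}$ normalization) suffices, so no $\psi_2$ component is needed.
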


Theorem~\ref{thm:oracle} may be viewed as a L\'evy-driven analogue of
Schmidt-Hieber~\cite[Theorem~2]{sch-hie20}.
Compared with the i.i.d.\ nonparametric regression setting, the relevant
information scale is the total observation horizon \(n\Delta\), rather
than the sample size \(n\). Moreover, the temporal dependence of
\((X_{k\Delta})_{k=0}^{n-1}\) gives rise to the additional
\(\log(n\Delta)\) factor, while the term \(\Delta\) accounts for the
discretization error.

More directly, Theorem~\ref{thm:oracle} provides a L\'evy-driven
extension of Oga and Koike~\cite[Theorem~3.1]{oga-koi24}.
As noted in Oga and Koike~\cite[Remark~3.1(b)]{oga-koi24}, their diffusion
bound is closely related to the projection-based oracle inequality of
Schmisser~\cite[Theorem~1]{sch13}, which concerns the expected
empirical risk. In the one-dimensional jump-diffusion setting,
Schmisser~\cite[Theorem~4]{sch14} further obtained, under a
sub-exponential tail condition, a bound of the same order as in the
corresponding diffusion model. Our result gives a similar conclusion
for sparse DNN estimators in a multidimensional setting: under
Assumption~\ref{asm:levy-tail}, the jump terms introduce no additional
rate term relative to Oga and Koike~\cite[Theorem~3.1]{oga-koi24}.
These results suggest that sub-exponential tails provide a natural
regime in which diffusion-type bounds may persist in the presence of
jumps. For heavier-tailed L\'evy measures, Schmisser
\cite[Theorem~10]{sch14} employed truncation and obtained
additional remainder terms depending on the Blumenthal--Getoor index;
an analogous robust DNN procedure is left for future work.

\subsection{Compositional function class}

Following Schmidt-Hieber~\cite{sch-hie20}, we consider a
hierarchical compositional structure in which each component map depends
on only a small subset of its input coordinates. For a closed rectangle \(D=[\ell,u]^p\) and \(m\in\bbN_0\), 
we write \(f\in C^m(D)\) if there exists \(\widetilde f\in C^m(\bbR^p)\)
such that \(f=\widetilde f|_D\). 
For \(\abs{\alpha}_1\le m\), we define
\(\partial^\alpha f := (\partial^\alpha \widetilde f)|_D\).

For \(\beta>0\), let
\(\underline{\beta}:=\max\{m\in\bbN_0:m<\beta\}
=\lceil\beta\rceil-1\), the largest integer strictly smaller than
\(\beta\). For a multi-index
\(\alpha=(\alpha_1,\dots,\alpha_p)\in\bbN_0^p\), write
\(\abs{\alpha}_1:=\sum_{j=1}^p\alpha_j\) and
\(\partial^\alpha
:=\partial_1^{\alpha_1}\cdots\partial_p^{\alpha_p}\).

For integers \(p\ge r\ge1\), we say that a function
\(f:[\ell,u]^p\to\bbR\) depends on at most \(r\) coordinates if there
exists \(S\subset\{1,\dots,p\}\) with \(\abs{S}\le r\) such that
\(f(x)=f(y)\) whenever \(x_j=y_j\) for every \(j\in S\). For
\(\beta,K>0\) and \(\ell<u\), define
\[
\begin{multlined}
\mathcal C_r^\beta([\ell,u]^p,K)
:=
\Biggl\{
f\in C^{\underline{\beta}}([\ell,u]^p):
f\text{ depends on at most }r\text{ coordinates},\\
\qquad
\sum_{\abs{\alpha}_1<\beta}
\norminf{\partial^\alpha f}
+
\sum_{\abs{\alpha}_1=\underline{\beta}}
\sup_{\substack{x,y\in[\ell,u]^p\\x\ne y}}
\frac{
    \abs{\partial^\alpha f(x)-\partial^\alpha f(y)}
}{
    \abs{x-y}_\infty^{\beta-\underline{\beta}}
}
\le K
\Biggr\}.
\end{multlined}
\]

Let \(q\in\bbN_0\),
\(\mathbf d=(d_0,\dots,d_{q+1})\in\bbN^{q+2}\),
\(\mathbf t=(t_0,\dots,t_q)\in\bbN^{q+1}\), and
\(\bm\beta=(\beta_0,\dots,\beta_q)\in(0,\infty)^{q+1}\), where
\(d_0=d\), \(d_{q+1}=1\), and \(t_i\le d_i\) for
\(i=0,\dots,q\).

We consider functions \(f:[0,1]^d\to\bbR\) admitting a representation
\begin{equation}\label{eq:composition}
    f=g_q\circ g_{q-1}\circ\cdots\circ g_0.
\end{equation}
Here, \(\ell_0=0\), \(u_0=1\), and the intervals
\([\ell_i,u_i]\), \(i=1,\dots,q+1\), are allowed to vary subject to
\(\ell_i<u_i\) and
\(\abs{\ell_i}\vee\abs{u_i}\le K\). We define
\[
\begin{multlined}
\ClassG(q,\mathbf d,\mathbf t,\bm\beta,K)
:=
\Biggl\{
f:[0,1]^d\to\bbR:
f=g_q\circ\cdots\circ g_0,\\
\qquad
g_i=(g_{ik})_{k=1}^{d_{i+1}}:
[\ell_i,u_i]^{d_i}
\to
[\ell_{i+1},u_{i+1}]^{d_{i+1}},\\
\qquad
g_{ik}\in
\mathcal C_{t_i}^{\beta_i}
([\ell_i,u_i]^{d_i},K),\quad
i=0,\dots,q,\quad
k=1,\dots,d_{i+1}
\Biggr\}.
\end{multlined}
\]
Here, \(d_i\) is the dimension of the \(i\)-th intermediate
representation, whereas \(t_i\) is the maximal number of coordinates
on which each component \(g_{ik}\) may depend. The compositional depth
\(q\) need not coincide with the neural-network depth \(L\).

For the drift estimation problem, define the zero-extended class
\[
\begin{aligned}
    \ClassGb(q,\mathbf d,\mathbf t,\bm\beta,K)
    &:=
    \Bigl\{
        f:\bbR^d\to\bbR:
        \supp(f)\subset[0,1]^d,\quad
        \\
    &\qquad
        f|_{[0,1]^d}
        \in\ClassG(q,\mathbf d,\mathbf t,\bm\beta,K)
    \Bigr\}.
\end{aligned}
\]

Finally, set
\[
    \beta_i^*
    :=
    \beta_i\prod_{\ell=i+1}^{q}(\beta_\ell\wedge1),
    \;
    \phi_n
    :=
    \max_{0\le i\le q}
    (n\Delta)^{-\frac{2\beta_i^*}{2\beta_i^*+t_i}},
    \;
    c_L
    :=
    \sum_{i=0}^{q}
    \frac{\beta_i+t_i}{t_i}
    \log_2(4t_i\vee4\beta_i).
\]

\begin{theorem}\label{thm:composition}
Suppose that Assumptions~\ref{asm:coefficients}--\ref{asm:levy-tail} are satisfied. Consider the \(d\)-dimensional L\'evy-driven SDE model \eqref{eq:model} such that
\(f_0 \in \ClassGb(q,\mathbf{d},\mathbf{t},\bm{\beta},K)\). Let
\(\widehat f_n\) be an estimator taking values in the network class \(\ClassF(L,\mathbf{p},s,F)\) and assume:
\begin{enumerate}[label=(\roman*)]
  \item \(F \ge K \vee 1\);
  \item \(11q+8+c_L\log_2(n\Delta\phi_n)\le L \le c_L^u n\Delta\phi_n\);
  \item \(c_p n\Delta\phi_n \le \min_{1\le i\le L} p_i\);
  \item \(c_s^{\ell} n\Delta\phi_n \log(n\Delta)\le s \le c_s^u n\Delta\phi_n \log(n\Delta)\),
\end{enumerate}
for some constants \(c_L^u,c_p,c_s^{\ell},c_s^u>0\). Then there exist constants \(C,C',C''>0\), depending only on
\((C_b,\allowbreak C_b',\allowbreak C_\Sigma,\allowbreak C_\Sigma',\allowbreak C_\Xi,
\allowbreak C_\Xi',\allowbreak C_\beta,\allowbreak C_\beta',\allowbreak C_m,\allowbreak\lambda,
\allowbreak c_L^u,\allowbreak c_p,\allowbreak c_s^{\ell},\allowbreak c_s^u,
\allowbreak q,\allowbreak\mathbf{d},\allowbreak\mathbf{t},\allowbreak\bm{\beta},\allowbreak F)\), such that
\[
\Risk_n(\widehat f_n,f_0)
\le
C\Psi_n(\widehat f_n)
+
C'\phi_n L\log^3(n\Delta)
+
C''\Delta.
\]
\end{theorem}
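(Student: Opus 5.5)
The plan is to combine Theorem~\ref{thm:oracle} with the approximation theory of Schmidt-Hieber~\cite{sch-hie20}, localized to $[0,1]^d$ as in Oga and Koike~\cite{oga-koi24}. Theorem~\ref{thm:oracle} is applied with a fixed $\tau$, say $\tau=2$. This gives
\[
\Risk_n(\widehat f_n,f_0)\le 2\Psi_n(\widehat f_n)+2\inf_{f\in\ClassF(L,\mathbf p,s,F)}\norminf{f-f_0}^2+C_2F^2\Bigl(\frac{s(L\log s+\log(n\Delta))\log(n\Delta)}{n\Delta}+\Delta\Bigr).
\]
Since $F$ is among the allowed dependencies, the $C_2F^2\Delta$ term is absorbed into $C''\Delta$. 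It remains to bound the approximation error and the complexity term separately by multiples of $\phi_nL\log^3(n\Delta)$.

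For the complexity term, condition~(iv) gives $s\le c_s^u n\Delta\phi_n\log(n\Delta)$, so $s/(n\Delta)\lesssim\phi_n\log(n\Delta)$. The lower bound in~(ii) and the upper bound in~(iv) give $\log s\lesssim\log(n\Delta)$, because $\phi_n\le1$ and $n\Delta\ge2$. Hence
\[
L\log s+\log(n\Delta)\lesssim L\log(n\Delta),
\]
and the whole term is $\lesssim\phi_nL\log^3(n\Delta)$. To make this precise, a lower bound on $s$ (here $s\ge2$) and the inequality $\log\log\le\log$ are used to keep the constants uniform.

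For the approximation term, the plan is to invoke Schmidt-Hieber's construction (his Theorem~5 and the proof of his Theorem~1) with $N\asymp n\Delta\phi_n$. This yields a ReLU network $\widetilde g$ with depth of order $\sum_i(\log_2(4t_i\vee4\beta_i))(\ldots)\log_2 N$, width $\asymp N$, and sparsity $\asymp N\log N$. It satisfies $\sup_{[0,1]^d}|\widetilde g-f_0|\lesssim\max_i N^{-\beta_i^*/t_i}$, and this error squared is $\lesssim\phi_n$ by the choice of $N$. Next, the network is embedded into the prescribed architecture $(L,\mathbf p,s)$:
\begin{enumerate}[label=(\alph*)]
\item deeper networks are obtained by composing with identity layers, which is possible since $L\ge 11q+8+c_L\log_2(n\Delta\phi_n)$;
\item larger widths are obtained by padding with zero weights, using~(iii);
\item the sparsity bound holds by~(iv), since $c_s^\ell n\Delta\phi_n\log(n\Delta)\gtrsim N\log N$ after adjusting constants;
\item the sup-norm bound is enforced by clipping with two extra ReLU layers, $x\mapsto(x\vee -F)\wedge F$, which does not increase the error because $\norminf{f_0}\le K\le F$.
\end{enumerate}
Multiplying by $\one_{[0,1]^d}$ gives an element of $\ClassF(L,\mathbf p,s,F)$ whose distance to $f_0=b^i\one_{[0,1]^d}$ equals the distance on $[0,1]^d$.

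The main obstacle is the bookkeeping in the embedding step: one must match Schmidt-Hieber's constants with the stated hypotheses, namely the precise lower bound $11q+8+c_L\log_2(\cdot)$ on $L$, together with the weight bounds $\le1$. The weight bounds require the rescaling between the intervals $[\ell_i,u_i]$ with $|\ell_i|\vee|u_i|\le K$ and $[0,1]$, as in Schmidt-Hieber's proof. If the constants $c_s^\ell$ and $c_p$ are too small for the construction at $N=n\Delta\phi_n$, I would instead take $N=c\,n\Delta\phi_n$ with a small $c$. This inflates the approximation error only by a constant factor depending on $(c_p,c_s^\ell,\mathbf t,\bm\beta)$, which is absorbed into $C'$. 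Collecting the three bounds gives the stated inequality.
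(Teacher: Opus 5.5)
Your proposal is correct and follows the paper's proof. The paper likewise applies Theorem~\ref{thm:oracle} with a fixed $\tau$, bounds the complexity term by $c\,\phi_n L\log^3(n\Delta)$ using the upper bound in (iv) together with $n\Delta\ge 2$, and controls the approximation error through Lemma~\ref{lem:composition-approx}, i.e.\ the Schmidt-Hieber/Oga--Koike construction; the embedding bookkeeping you spell out (taking $N=c\,n\Delta\phi_n$ with small $c$, identity and zero padding, and a final clipping) is exactly what the paper delegates to that lemma without further detail.
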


\subsection{Minimax optimality}

We finally show that the rate in Theorem~\ref{thm:composition} is optimal up
to a logarithmic factor in the minimax sense. In the pure diffusion setting,
Oga and Koike~\cite[Lemma~3.1, Proposition~3.1 and Theorem~3.3]{oga-koi24}
prove a uniform exponential \(\beta\)-mixing bound on a tractable diffusion
submodel, the corresponding uniform upper bound, and the matching minimax
lower bound.

If one only wanted a lower bound for a larger model containing the pure
diffusion case, one could restrict the model to that diffusion submodel. We do
not take this shortcut. Instead, we prove the lower bound on a fixed
non-trivial compound-Poisson submodel. This shows that the rate remains
unimprovable in a genuinely jump-driven statistical experiment, not merely
because the whole model contains the pure diffusion case. Since minimax lower
bounds may be proved on any smaller submodel, it is enough for our purpose to
work with this fixed jump mechanism.

Fix a jump intensity \(\iota>0\) and consider the following
compound-Poisson submodel:
\begin{equation}
\label{eq:gaussian-cp-submodel}
    dX_t=b(X_{t-})\,dt+dW_t+dJ_t,\qquad X_0=\eta .
\end{equation}
Here
\[
    J_t=\sum_{m=1}^{N_t}Z_m,\qquad t\ge0,
\]
where \(N=(N_t)_{t\ge0}\) is a Poisson process with intensity \(\iota\),
and \((Z_m)_{m\ge1}\) are i.i.d.\ \(N(0,I_d)\)-random vectors independent
of \(N\), \(W\), and \(\eta\). Equivalently, the L\'evy measure of \(J\) is
\[
    \nu_J(dz)=\iota\varphi_d(z)\,dz,
    \qquad
    \varphi_d(z):=(2\pi)^{-d/2}
    \exp\left(-\frac{\abs{z}^2}{2}\right).
\]
Throughout this subsection, this jump mechanism is fixed and only the drift
coefficient \(b\) varies.

For \(r\in(0,1]\) and \(K>0\), let \(\ClassB_0^J(d,r,K)\) be the class of
measurable functions \(b:\bbR^d\to\bbR^d\) satisfying
\[
    \langle b(x),x\rangle\le -r\abs{x}
    \quad\text{for all } \abs{x}>2d,
    \qquad
    \sup_{x\in\bbR^d}\abs{b(x)}_\infty\le K .
\]
For every \(b\in\ClassB_0^J(d,r,K)\),
\eqref{eq:gaussian-cp-submodel} admits a unique weak solution.
Indeed, by Jacod and Shiryaev~\cite[Corollary~III.2.41]{jac-shi03},
the associated martingale problem is well posed, since \(b\) is bounded,
the diffusion matrix is the identity, and the Lévy kernel
\(\nu_J(dz)=\iota\varphi_d(z)\,dz\) is finite and independent of the state variable.

The Gaussian specification is made for concreteness; its role is
discussed in Remark~\ref{rem:fixed-cp-extension} below.

The following lemma is the analogue of
Oga and Koike~\cite[Lemma~3.1]{oga-koi24} for the Gaussian compound-Poisson submodel.

\begin{lemma}
\label{lem:uniform-mixing-gaussian-cp}
Consider the \(d\)-dimensional jump diffusion
\eqref{eq:gaussian-cp-submodel} with
\(b\in\ClassB_0^J(d,r,K)\). Assume that
\(\bbE[\exp(r\abs{\eta})]<\infty\).
Then there exist positive constants \(C_\beta\) and \(C_\beta'\) such that
\[
    \beta_X(t)\le C_\beta' \exp(-C_\beta t),
    \qquad t\ge0,
\]
uniformly over \(b\in\ClassB_0^J(d,r,K)\). Here \(C_\beta\) depends only on
\(d,r,K,\iota\), while \(C_\beta'\) depends only on
\(d,r,K,\iota\) and \(\bbE[\exp(r\abs{\eta})]\).
\end{lemma}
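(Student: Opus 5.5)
We prove the lemma by a Harris-type argument with constants tracked uniformly over \(\ClassB_0^J(d,r,K)\), as in Oga and Koike~\cite[Lemma~3.1]{oga-koi24}, with the jump part handled by its generator. The Harris theorem gives geometric ergodicity in a weighted total variation norm,
\[
\norm{P_t(x,\cdot)-\pi}_{\mathrm{TV}}\le C V(x)e^{-ct},
\]
where \(\pi\) is the invariant law. It needs two ingredients: a Foster--Lyapunov drift condition and a minorization on a compact set. Once \(C,c\) depend only on \((d,r,K,\iota)\), the definition of \(\beta_X\) and the triangle inequality give
\[
\beta_X(t)\le C e^{-ct}\sup_{s\ge 0}\bigl(\mu_0P_sV+\mu_0P_{s+t}V\bigr).
\]
The supremum is bounded using the drift condition together with \(\bbE[V(\eta)]\). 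This accounts for how \(C_\beta\) and \(C_\beta'\) depend on the parameters.

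\textbf{Step 1 (Lyapunov function).} Take \(V(x)=\exp(r\phi(x))\), where \(\phi\) is smooth with \(\phi(x)=\abs{x}\) for \(\abs{x}\ge 1\), and \(\nabla\phi\), \(\nabla^2\phi\) are bounded. The generator is
\[
\mathcal A V=\langle b,\nabla V\rangle+\tfrac12\Delta V+\iota\int\bigl(V(x+z)-V(x)\bigr)\varphi_d(z)\,dz .
\]
For \(\abs{x}>2d\), the drift condition gives \(\langle b(x),\nabla V\rangle\le -r^2V\). The Laplacian term is at most \(C(r,d)\,r\abs{x}^{-1}V+r^2V/2\), which needs care. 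The jump term is at most
\[
\iota V(x)\bigl(\bbE e^{r\abs{Z}}-1\bigr),
\]
because \(\phi(x+z)\le\phi(x)+\abs{z}+c\).

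These terms do not automatically combine into a negative drift: the jump term can exceed \(r^2V\) when \(\iota\) is large. So I would instead use \(V(x)=\exp(\epsilon\phi(x))\) with a small \(\epsilon\). The drift then contributes \(-\epsilon r V\), the Laplacian is \(O(\epsilon^2+\epsilon/\abs{x})V\), and the jump term is \(\iota(\bbE e^{\epsilon\abs{Z}}-1)V=O(\iota\epsilon)V\). This still competes at order \(\epsilon\). The fix is to use the bound \(\bbE e^{\epsilon\langle \nabla\phi,Z\rangle}-1=O(\epsilon^2)\), which holds because \(Z\) is symmetric with Gaussian tails, together with a second-order Taylor expansion in which \(\abs{\nabla^2\phi}\lesssim 1/\abs{x}\) on the region where \(\abs{z}\) is small relative to \(\abs{x}\). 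This gives
\[
\mathcal AV\le -\tfrac{\epsilon r}{2}V+C\,\one_{\{\abs{x}\le R\}}
\]
for a small \(\epsilon=\epsilon(d,r,\iota)\) and a large \(R=R(d,r,K,\iota)\). Since \(\epsilon\le r\), we have \(\bbE[V(\eta)]\le\bbE[e^{r\abs{\eta}}]\) up to a constant, which matches the stated assumption. I expect this step to be the main obstacle: the jump term must be shown to be genuinely of second order uniformly in \(x\).

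\textbf{Step 2 (minorization).} Show \(P_1(x,\cdot)\ge \kappa\,\mathrm{Leb}|_{B}\) for all \(\abs{x}\le R\), where \(B\) is a fixed ball, uniformly over \(b\). Condition on the event of no jumps on \([0,1]\), which has probability \(e^{-\iota}\). On this event the process is a diffusion with unit diffusion matrix and drift bounded by \(K\sqrt d\). By Girsanov's theorem, with Novikov's condition holding uniformly because \(b\) is bounded, together with Cauchy--Schwarz, the transition density is bounded below by \(c(K,d,R)\) times the Gaussian density. The Gaussian density is bounded below on \(B\). This gives \(\kappa=\kappa(d,K,\iota,R)\).

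\textbf{Step 3 (conclusion).} Apply the continuous-time Harris theorem, for instance via the skeleton chain \(P_1\) and the Hairer--Mattingly quantitative version, whose constants depend only on the drift and minorization constants. From \(\mathcal AV\le -cV+C\) we get \(P_sV\le V+C/c\) uniformly in \(s\). Substituting this into the bound from the first paragraph yields the claim.
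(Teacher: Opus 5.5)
Your proposal is correct and follows essentially the same route as the paper: an exponential Lyapunov function \(\exp(\theta\rho(x))\) with \(\rho\) a smoothed \(\abs{x}\) and \(\theta\) small, centering of the Gaussian jumps so that the nonlocal term is second order (plus a tail term that vanishes as \(\abs{x}\to\infty\)), minorization through the no-jump event of probability \(e^{-\iota}\), and a Hairer--Mattingly contraction for the skeleton chain combined with the uniform bound on \(\mu_0P_sV\). The one step to tighten is the minorization: unconditional Girsanov plus Cauchy--Schwarz only gives \(Q^1(x,A)\ge\bbP(x+W_1\in A)^2/C\) for the jump-free diffusion, which is not the claimed density lower bound, so either run the argument conditionally on \(W_1\) (Brownian bridge), cite the Qian--Russo--Zheng lower bound as the paper does, or note that the quadratic bound already gives a uniform lower bound on the total-variation overlap of \(P^1(x,\cdot)\) and \(P^1(y,\cdot)\) for \(x,y\) in the compact set, which is all the Hairer--Mattingly contraction uses.
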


For a fixed component \(i\in\{1,\dots,d\}\), define
\(\ClassB_i^J(r,q,\mathbf d,\mathbf t,\boldsymbol\beta,K)\) as the class
of all functions \(b\in\ClassB_0^J(d,r,K)\) satisfying the following
conditions:
\begin{enumerate}
    \item[(i)] \( \abs{b(x)-b(y)}\le K\abs{x-y} \) for all \(x,y\in\bbR^d\);
    \item[(ii)] \( f_0=b^i\one_{[0,1]^d}
        \in
        \ClassGb(q,\mathbf d,\mathbf t,\boldsymbol\beta,K) \).
\end{enumerate}
Theorem~\ref{thm:composition} and
Lemma~\ref{lem:uniform-mixing-gaussian-cp} imply the following upper bound
uniformly over this submodel, exactly as in
Oga and Koike~\cite[Proposition~3.1]{oga-koi24}.

\begin{proposition}
\label{prop:minimax-upper-gaussian-cp}
Consider the jump diffusion model \eqref{eq:gaussian-cp-submodel} with
\(b\in\ClassB_i^J(r,q,\allowbreak\mathbf d,\allowbreak\mathbf t,
\allowbreak\boldsymbol\beta,\allowbreak K)\) and
\(\bbE[\exp(r\abs{\eta})]<\infty\).
Let \(\widehat f_n\) be a minimizer of \(\ClassQ_n(f)\) over
\(f\in\ClassF(L,\mathbf p,s,F)\), where the network parameters satisfy
conditions \emph{(i)}--\emph{(iv)} in Theorem~\ref{thm:composition}. Suppose
that \(L=O(\log_2(n\Delta))\), \(n\Delta\to\infty\), and
\(n\Delta^2\to0\).
Then
\[
    \sup_{b\in\ClassB_i^J(r,q,\mathbf d,\mathbf t,\boldsymbol\beta,K)}
    \Risk_n(\widehat f_n,f_0)
    =
    O\!\left(\phi_n\log^4(n\Delta)\right)
\]
as \(n\to\infty\).
\end{proposition}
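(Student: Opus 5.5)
The plan is to apply Theorem~\ref{thm:composition} to each \(b\in\ClassB_i^J(r,q,\mathbf d,\mathbf t,\boldsymbol\beta,K)\) and then check that every constant in the resulting bound can be chosen independently of \(b\). First, I cast the submodel \eqref{eq:gaussian-cp-submodel} in the form \eqref{eq:model}. Since \(\nu_J\) has a finite first moment, the uncompensated jumps can be written as
\[
J_t=\int_0^t\int z\,\widetilde N(ds,dz)+t\int z\,\nu_J(dz),
\]
and the second term vanishes because \(\varphi_d\) is symmetric. So the model is \eqref{eq:model} with \(\Sigma\equiv I_d\), \(\Xi\equiv I_d\) and \(\nu=\nu_J\).

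Next I verify the assumptions of Theorem~\ref{thm:composition}, with constants that are uniform over the class.
\begin{itemize}
\item \textbf{Assumption~\ref{asm:coefficients}.} It holds with \(C_b'\le K\) by condition (i), and \(C_b\le\sqrt d\,K\) from \(\sup\abs{b}_\infty\le K\). Also \(C_\Sigma=C_\Xi=\sqrt d\) and \(C_\Sigma'=C_\Xi'=0\); since the statements require positive Lipschitz constants, any positive value may be used.
\item \textbf{Assumption~\ref{asm:levy-tail}.} The Gaussian tail gives \(\nu_J(\{\abs z>r\})\le \iota\, C_d e^{-r}\), so \(\lambda=1\) and \(C_m\) and \(C_\nu=\iota d\) depend only on \((d,\iota)\).
\item \textbf{Assumption~\ref{asm:mixing}.} Lemma~\ref{lem:uniform-mixing-gaussian-cp} gives it with \(C_\beta,C_\beta'\) depending only on \(d,r,K,\iota\) and \(\bbE[\exp(r\abs\eta)]\), uniformly in \(b\).
\item \textbf{Assumption~\ref{asm:sampling}.} It holds for large \(n\), since \(\Delta\to0\) (a consequence of \(n\Delta^2\to0\) and \(n\Delta\to\infty\)) and \(n\Delta\to\infty\).
\item \textbf{Remaining hypotheses.} Condition (ii) in the definition of the class gives \(f_0\in\ClassGb\), and the network parameters satisfy (i)--(iv) by hypothesis.
\end{itemize}
Hence the constants \(C,C',C''\) in Theorem~\ref{thm:composition}, which depend only on these quantities, are the same for every \(b\) in the class.

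Since \(\widehat f_n\) is an exact minimizer, \(\Psi_n(\widehat f_n)=0\). Using \(L=O(\log(n\Delta))\), the second term becomes \(C'\phi_nL\log^3(n\Delta)=O(\phi_n\log^4(n\Delta))\). It remains to absorb \(C''\Delta\). I write \(\Delta=(n\Delta^2)/(n\Delta)=o((n\Delta)^{-1})\). Every exponent satisfies \(2\beta_i^*/(2\beta_i^*+t_i)<1\), so \(\phi_n\ge (n\Delta)^{-1}\) and therefore \(\Delta=o(\phi_n)\). Taking the supremum over \(b\) gives the claim.

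I expect the main obstacle to be the genuine uniformity of the constants. The constant \(C\) in Theorem~\ref{thm:composition} also depends on the approximation step; this is uniform because it only uses \((q,\mathbf d,\mathbf t,\boldsymbol\beta,K,F)\) through Schmidt-Hieber's approximation theorem. A second point is that "exists an estimator taking values in the class" requires measurability of the minimizer. I would handle this as in \cite{oga-koi24}, choosing a measurable (near-)minimizer, or by noting that the bound allows an arbitrarily small optimization error.
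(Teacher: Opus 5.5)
Your proposal is correct and follows essentially the same route as the paper. You apply Theorem~\ref{thm:composition} with constants made uniform in \(b\) via Lemma~\ref{lem:uniform-mixing-gaussian-cp} and the Gaussian exponential moments, set \(\Psi_n(\widehat f_n)=0\), absorb \(L\log^3(n\Delta)\) into \(\log^4(n\Delta)\), and use \(\Delta=o((n\Delta)^{-1})=o(\phi_n)\). The parts the paper compresses into one line are the rewriting of the submodel in the centered form \eqref{eq:model} with \(\Sigma=\Xi=I_d\) and \(\nu=\nu_J\), and the check of Assumptions~\ref{asm:coefficients}--\ref{asm:levy-tail}; you spell these out explicitly.
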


\begin{proof}
This follows from Theorem~\ref{thm:composition} and
Lemma~\ref{lem:uniform-mixing-gaussian-cp} in the same way as
Oga and Koike~\cite[Proposition~3.1]{oga-koi24}. Indeed, the Gaussian
compound-Poisson L\'evy measure has exponential moments of all orders, and
Lemma~\ref{lem:uniform-mixing-gaussian-cp} gives the constants in
Assumption~\ref{asm:mixing} uniformly over
\(\ClassB_i^J(r,q,\mathbf d,\mathbf t,\boldsymbol\beta,K)\). For an exact
minimizer, \(\Psi_n(\widehat f_n)=0\). The approximation term in
Theorem~\ref{thm:composition} is
\(O(\phi_n L\log^3(n\Delta))=O(\phi_n\log^4(n\Delta))\), because
\(L=O(\log_2(n\Delta))\). Finally, since
\[
    \phi_n
    =
    \max_{0\le j\le q}
    (n\Delta)^{-2\beta_j^*/(2\beta_j^*+t_j)}
    \ge (n\Delta)^{-1},
\]
the condition \(n\Delta^2\to0\) implies
\(\Delta=o((n\Delta)^{-1})=o(\phi_n)\). Hence the discretization term
\(C''\Delta\) in Theorem~\ref{thm:composition} is negligible.
\end{proof}

We next state the corresponding minimax lower bound. Let \(i^*\) be a rate-determining index satisfying
\[
    \frac{\beta_{i^*}^*}{2\beta_{i^*}^*+t_{i^*}}
    =
    \min_{0\le j\le q}
    \frac{\beta_j^*}{2\beta_j^*+t_j},
\]
and write
\[
    \beta^*:=\beta_{i^*},
    \qquad
    \beta^{**}:=\beta_{i^*}^*,
    \qquad
    t^*:=t_{i^*},
    \qquad
    B:=\prod_{\ell=i^*+1}^{q}(\beta_\ell\wedge1).
\]
Then \(\beta^{**}=\beta^*B\).

The following result gives a L\'evy-driven counterpart of
Oga and Koike~\cite[Theorem~3.3]{oga-koi24} on the Gaussian
compound-Poisson submodel, under the additional regularity condition
\(\beta^{**}\ge1\).

\begin{theorem}
\label{thm:minimax-lower-gaussian-cp}
Consider the jump diffusion model \eqref{eq:gaussian-cp-submodel} with
\(b\in\ClassB_i^J(r,q,\allowbreak\mathbf d,\allowbreak\mathbf t,
\allowbreak\boldsymbol\beta,\allowbreak K)\) and
\(\bbE[\exp(r\abs{\eta})]<\infty\). Suppose that
\(n\Delta\to\infty\) and \(\Delta\to0\) as \(n\to\infty\). Suppose also that
\(\beta^{**}\ge1\) and
\(t_j\le \min\{d_0,\dots,d_{j-1}\}\) for \(j=1,\dots,q\).
When \(q=0\), the second condition is understood to be void. Then there exists
\(K_0>0\) such that, for all \(K\ge K_0\),
\[
    \liminf_{n\to\infty}
    \phi_n^{-1}
    \inf_{\widehat f_n}
    \sup_{b\in\ClassB_i^J(r,q,\mathbf d,\mathbf t,\boldsymbol\beta,K)}
    \Risk_n(\widehat f_n,f_0)
    >0,
\]
where the infimum is taken over all estimators.
\end{theorem}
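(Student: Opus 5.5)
The plan is to reduce the minimax lower bound to a multiple-hypothesis testing problem via Fano's lemma or Tsybakov's Theorem~2.5, following the construction of Schmidt-Hieber~\cite{sch-hie20} and Oga and Koike~\cite[Theorem~3.3]{oga-koi24}. The only genuinely new ingredient is the Kullback--Leibler bound between laws of the discretely observed jump diffusion.

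\emph{Step 1: hypotheses.} Take \(b^{(0)}(x)=-x\,\chi(x)\) with a smooth bounded cutoff, chosen so that \(b^{(0)}\in\ClassB_0^J(d,r,K)\). Set \(b^{(\omega)}=b^{(0)}+e_i h_\omega\), where \(h_\omega\) is supported in \([0,1]^d\) and lies in the compositional class. Following Schmidt-Hieber, I would build
\[
h_\omega(x)=\sum_{j}\omega_j\,\epsilon_n^{\beta^{**}}\,\psi\bigl((x_{S}-x_j)/\epsilon_n^{B}\bigr)\quad\text{or its compositional analogue,}
\]
depending only on the \(t^*\) coordinates of the rate-determining layer. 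The bumps have height \(\epsilon_n^{\beta^{**}}\) and are placed on a grid of \(M\asymp \epsilon_n^{-t^*}\) cells, where \(\epsilon_n=(n\Delta)^{-1/(2\beta^{**}+t^*)}\). Varshamov--Gilbert then gives \(2^{M/8}\) hypotheses with pairwise separation \(\|h_\omega-h_{\omega'}\|_{L^2([0,1]^d)}^2\gtrsim \phi_n\). The conditions \(t_j\le\min\{d_0,\dots,d_{j-1}\}\) and \(\beta^{**}\ge 1\) are used for two things: embedding into \(\ClassG\), and keeping \(b^{(\omega)}\) \(K\)-Lipschitz for \(K\ge K_0\). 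The latter works because the bump has gradient of order \(\epsilon_n^{\beta^{**}-B}\le 1\) when \(\beta^{**}\ge B\), which holds since \(\beta^*\ge1\). The dissipativity condition is untouched because the perturbation is supported in \([0,1]^d\subset\{|x|\le 2d\}\).

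\emph{Step 2: separation in \(\Risk_n\).} The risk is measured through the law of \(X'_{k\Delta}\). I need \(\frac1n\sum_k\bbE[(h_\omega-h_{\omega'})^2(X'_{k\Delta})]\gtrsim \|h_\omega-h_{\omega'}\|_{L^2}^2\). For this I would show that the time-averaged marginal densities of \(X'\) are bounded below on \([0,1]^d\), uniformly over the class. This follows from a Gaussian lower bound for the transition density. The Brownian part gives this directly, since the process without jumps on a unit time interval has probability \(e^{-\iota}\) and is then a bounded-drift diffusion with Aronson-type lower bounds. Combining it with Lemma~\ref{lem:uniform-mixing-gaussian-cp} handles the initial-law dependence, because the marginals converge to the invariant law uniformly.

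\emph{Step 3: KL bound, the main obstacle.} I need \(\mathrm{KL}(P^n_\omega\|P^n_0)\lesssim n\Delta\,\|h_\omega\|_{L^2}^2\). The chosen \(\epsilon_n\) then makes this at most \(cM\), and Fano applies. The discrete-observation KL is bounded by the continuous-path KL, using the data-processing inequality. By Girsanov for semimartingales with the same jump characteristics and unit diffusion matrix, this equals
\[
\tfrac12\bbE_\omega\int_0^{n\Delta}|h_\omega(X_s)|^2ds.
\]
The jumps cancel since \(\nu_J\) does not depend on \(b\); this is exactly where the fixed compound-Poisson mechanism is used. This quantity is \(\le \tfrac12 n\Delta\,\sup_s\bbE_\omega|h_\omega(X_s)|^2\). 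I then need an upper bound on the marginal density of \(X_s\) on \([0,1]^d\), uniform over the hypotheses. Condition on the jump times and sizes: between jumps the process is a bounded-drift diffusion with Gaussian upper bounds on its transition density. Mixing over the jumps, after a unit time the density is bounded by a constant depending on \((d,K,\iota)\). For \(s<1\), either assume \(\eta\) has a bounded density or replace the integral over \([0,1]\) by a negligible \(O(1)\) contribution compared with \(n\Delta\). With \(\|h_\omega\|_\infty^2 M^{-1}\)-type sums, this gives \(\mathrm{KL}\lesssim n\Delta\,\epsilon_n^{2\beta^{**}+t^*}M\asymp M\cdot c\) after scaling the bump amplitude constant.

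\emph{Step 4: conclusion.} Tsybakov's theorem gives \(\inf_{\widehat f}\max_\omega\Risk_n\gtrsim\phi_n\) for all large \(n\), which yields the stated \(\liminf\). Measurability and the choice \(K\ge K_0\) only enter to ensure all hypotheses lie in \(\ClassB_i^J\).
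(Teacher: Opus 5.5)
Your strategy is the paper's. You reduce $\Risk_n$ to an $L^2([0,1]^d)$ loss through a uniform lower bound on the law of the chain on the cube, and you build a Schmidt-Hieber-type packing. You compute the Kullback--Leibler divergence by Girsanov, where the fixed compound-Poisson mechanism makes the jump part of the likelihood ratio trivial. You bound the occupation integral by conditioning on the jump path, so that $X-J$ is a diffusion with bounded time-dependent drift; the paper uses the Ba\~nos--Kr\"uhner density bound here, which is uniform in $t\ge t_0$ without a Chapman--Kolmogorov step. You conclude by Fano together with the discrete-versus-continuous comparison. Your Step~2 is slightly leaner than Lemma~\ref{lem:levy-minimax-risk-reduction}, since it needs no clipping and leaves no $C/(n\Delta)$ remainder. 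It does, however, rely on the uniform bound $\sup_{t\ge0}\bbE_b[V(X_t)]\le\bbE[V(\eta)]+C_0/a$ and on the minorization from the \emph{proof} of Lemma~\ref{lem:uniform-mixing-gaussian-cp}. The $\beta$-mixing statement by itself does not give convergence of the marginals.

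Step~1, however, does not yet produce admissible hypotheses, and it needs three repairs.

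\emph{Support of the perturbation.} On the cube the perturbation must equal $F_w(x_1,\dots,x_{t^*})$, which is constant in the inactive coordinates. So when $t^*<d$ it cannot be both supported in $[0,1]^d$ and continuous on $\bbR^d$: it would vanish on a face such as $\{x_d=1\}$, forcing $F_w\equiv0$. Condition (i) of $\ClassB_i^J$ then fails. You need an extension $\chi(x)F_w(x_1,\dots,x_{t^*})$ with $\chi\equiv1$ near $[0,1]^d$ and support in $\{\abs{x}\le R_0\}\subset\{\abs{x}<2d\}$. The KL bound is then in terms of $L^2(\bbR^d)$ norms of these extensions, which are comparable to the cube norms by Fubini.

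\emph{Base drift.} The base drift must vanish on that support. With $b^{(0)}(x)=-x\chi(x)$ and $\chi=1$ near the origin, you get $f_0=-x_i+h_\omega$ on the cube. Nothing guarantees that this lies in $\ClassGb(q,\mathbf d,\mathbf t,\boldsymbol\beta,K)$, because the compositional class is not closed under addition. The paper instead uses $-D$ with $D=0$ on $\{\abs{x}\le R_0\}$.

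\emph{Bump scaling.} The composed bump is $\epsilon_n^{\beta^{**}}H((z-u)/\epsilon_n)$ with $H=\prod_r\mathsf K(z_r)^B$. Its width is $\epsilon_n$, not $\epsilon_n^B$; width $\epsilon_n^B$ would give $\epsilon_n^{-Bt^*}$ cells and a weaker rate than $\phi_n$ when $B<1$. Its gradient is therefore of order $\epsilon_n^{\beta^{**}-1}$. It is exactly the hypothesis $\beta^{**}\ge1$, not $\beta^*\ge1$, that gives a Lipschitz constant uniform in $n$. This also requires $\mathsf K^B$ to be Lipschitz when $B<1$, which is why the paper uses the flat bump $\mathsf K(x)=c\exp\{-1/(x(1-x))\}$.

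With these changes your argument coincides with the paper's proof.
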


The condition \(\beta^{**}\ge1\) ensures that the localized packing
constructed below admits globally Lipschitz extensions with a Lipschitz
constant uniform in \(n\). The condition on
\(\mathbf d\) and \(\mathbf t\) is the same as the one imposed in the
compositional lower-bound argument of Schmidt-Hieber~\cite{sch-hie20} and used in
Oga and Koike~\cite{oga-koi24}. Proposition~\ref{prop:minimax-upper-gaussian-cp}
and Theorem~\ref{thm:minimax-lower-gaussian-cp} show that the least-squares
estimator over the neural-network class is minimax optimal over the Gaussian
compound-Poisson submodel up to the logarithmic factor \(\log^4(n\Delta)\).

\begin{remark}
\label{rem:fixed-cp-extension}
The Gaussian specification is not essential for the preceding minimax
result. The same upper and lower bounds continue to hold when
\(\nu_J\) is any fixed nonzero finite L\'evy measure satisfying
\[
    \int_{\bbR^d} z\,\nu_J(dz)=0,
    \qquad
    \int_{\bbR^d}\abs{z}^2e^{\theta_J\abs{z}}\nu_J(dz)<\infty
\]
for some \(\theta_J>0\).
Indeed, the proof uses only centering, a suitable exponential moment,
and finite jump activity for the mixing argument, while the lower-bound
construction requires only that the jump mechanism be fixed across the
drift alternatives.
\end{remark}

\section{Proof of Theorem~\ref{thm:oracle}}
\label{sec:proof-oracle}

We first define the expected empirical error by
\[
\EmpRisk_n(\widehat f_n,f_0)
:=
\bbE\Bigl[
\frac1n\sum_{k=0}^{n-1}
\bigl(\widehat f_n(X_{k\Delta})-f_0(X_{k\Delta})\bigr)^2
\Bigr].
\]
For \(\delta>0\), a subset \(\ClassG\subset\ClassF\) is called a
\(\delta\)-net of \(\ClassF\) with respect to \(\norminf{\cdot}\) if for
every \(f\in\ClassF\) there exists \(g\in\ClassG\) such that
\(\norminf{f-g}\le\delta\). The smallest possible cardinality of such a
set is denoted by \(\NN(\delta,\ClassF,\norminf{\cdot})\).

The proof starts from two general comparison bounds.

\begin{lemma}[Oga and Koike~\cite{oga-koi24}, Lemma~4.1]\label{lem:risk-to-emp-risk}
Let \(X=(X_t)_{t\ge 0}\) be a general \(d\)-dimensional stochastic process,
not necessarily satisfying \eqref{eq:model}. Let \(\delta>0\), and let
\(\NN_n\) be an integer such that
\(\NN_n\ge \NN(\delta,\ClassF,\norminf{\cdot})\vee \exp(7.4)\).
Also, let \(a_n\) be a positive number such that
\(\mu_n:=\lfloor n/(2a_n)\rfloor>0\). In addition, suppose that there is
a number \(F\ge 1\) such that \(\abs{f}\le F\) for all
\(f\in\ClassF\cup\{f_0\}\). Then, for all \(\varepsilon\in(0,1]\),
\[
\begin{aligned}
\Risk_n(\widehat f_n,f_0)
&\le
(1+\varepsilon)\EmpRisk_n(\widehat f_n,f_0)
+
\frac{24(1+\varepsilon)^2}{\varepsilon}\,
F^2\frac{\log\NN_n}{\mu_n}
\\
&\quad+4(2+\varepsilon)F^2\beta_X(a_n\Delta)
+\frac{8F^2}{n}
+4(2+\varepsilon)F\delta.
\end{aligned}
\]
\end{lemma}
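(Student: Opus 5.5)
This is a blocking argument (Berbee coupling) applied to the bounded loss class; it is Lemma~4.1 of Oga and Koike, so the plan is to follow their route. The process \(X\) enters only through \(\beta_X\).

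\textbf{Step 1 (discretization).} For \(f\in\ClassF\cup\{f_0\}\) write \(g_f:=(f-f_0)^2\), so \(0\le g_f\le 4F^2\). Let \(f^\delta\) be the point of a minimal \(\delta\)-net \(\ClassG\) closest to \(\widehat f_n\). Then
\[
\abs{g_{\widehat f_n}-g_{f^\delta}}\le 4F\delta ,
\]
which produces the terms \(4(2+\varepsilon)F\delta\) (these appear once on the \(\Risk_n\) side and once, multiplied by \(1+\varepsilon\), on the \(\EmpRisk_n\) side). It therefore suffices to compare
\[
\bbE\Bigl[\frac1n\sum_k g_{f^\delta}(X'_{k\Delta})\Bigr]
\quad\text{with}\quad
(1+\varepsilon)\,\bbE\Bigl[\frac1n\sum_k g_{f^\delta}(X_{k\Delta})\Bigr].
\]
Because \(X'\) is independent of the sample, the first quantity is \(\bbE[P_n g_{f^\delta}]\) with \(P_n g:=\bbE\bigl[\frac1n\sum_k g(X'_{k\Delta})\bigr]\) deterministic. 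It is thus enough to bound
\[
\bbE\Bigl[\max_{g\in\ClassG}\bigl\{P_n g_g-(1+\varepsilon)P_n^{\mathrm{emp}}g_g\bigr\}\Bigr],
\]
where \(P_n^{\mathrm{emp}}\) is the empirical average over the sample.

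\textbf{Step 2 (blocking and coupling).} Split \(\{0,\dots,n-1\}\) into \(2\mu_n\) consecutive blocks of length \(a_n\), alternately labelled odd and even. The leftover indices, at most \(2a_n\) of them, contribute \(O(F^2a_n/n)\); this, together with rounding in \(\mu_n\), is absorbed into \(\log\NN_n/\mu_n\) and \(8F^2/n\). Berbee's lemma gives, for the odd blocks, independent copies with the correct block marginals that differ from the originals with probability at most \(\mu_n\beta_X(a_n\Delta)\), and likewise for the even blocks. The definition of \(\beta_X\), which takes a supremum over the initial time under the flow \(\mu_0P_s\), is what makes the coupling valid in this possibly nonstationary setting. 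Since all functions are bounded by \(4F^2\), replacing the original blocks by the coupled ones costs \(4(2+\varepsilon)F^2\beta_X(a_n\Delta)\) after normalization.

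\textbf{Step 3 (concentration and optimization).} For the independent blocks, let \(U_j\) denote the block sums divided by \(a_n\); these lie in \([0,4F^2]\) and satisfy \(\mathrm{Var}(U_j)\le 4F^2\,\bbE U_j\). Bernstein's inequality together with a union bound over \(\NN_n\) elements gives
\[
\bbP\bigl(\bar\mu-\bar U\ge t\bigr)\le \NN_n\exp\!\Bigl(-\frac{c\,\mu_n t^2}{F^2(\bar\mu+t)}\Bigr).
\]
The step I expect to be the main obstacle is turning this into the multiplicative form \(\bar\mu\le(1+\varepsilon)\bar U+\text{remainder}\). The plan is to use the self-bounding trick: the event \(\bar\mu-(1+\varepsilon)\bar U>u\) forces \(\bar\mu-\bar U>(\varepsilon\bar\mu+u)/(1+\varepsilon)\). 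Inserting this into Bernstein cancels the variance, leaving a tail of order \(\exp(-c\,\varepsilon\mu_n u/((1+\varepsilon)F^2))\). Integrating this tail in \(u\) yields
\[
\frac{(1+\varepsilon)^2}{\varepsilon}\,F^2\,\frac{\log\NN_n}{\mu_n}
\]
plus a term \(F^2/(\mu_n\NN_n)\). The condition \(\NN_n\ge e^{7.4}\) is there so that this last term, and the factor \(2\) from treating odd and even blocks separately, can be swallowed into the constant \(24\). Careful bookkeeping of the constants is needed here, following Oga and Koike~\cite{oga-koi24} exactly. Summing the odd-block and even-block bounds and collecting the discretization, coupling and remainder terms gives the stated inequality.
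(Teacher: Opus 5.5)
Your outline is correct and follows essentially the route behind Oga and Koike's Lemma~4.1, which the paper imports without proof: \(\delta\)-net discretization, independent blocks via Berbee's coupling under the \(\beta\)-mixing coefficient, and Bernstein plus a union bound turned into the multiplicative \((1+\varepsilon)\) form. Two points should be made explicit. First, the coupling must cost \(O(F^2\beta_X(a_n\Delta))\) and not \(O(F^2\mu_n\beta_X(a_n\Delta))\), and the probability of some mismatch does not give this; you need that each block is mismatched with probability at most \(\beta_X(a_n\Delta)\), and that one mismatched block moves the normalized maximum by at most a constant times \((1+\varepsilon)F^2/\mu_n\), so the expected number of mismatched blocks is what enters. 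Second, integrating the tail leaves a remainder of order \((1+\varepsilon)^2F^2/(\varepsilon\mu_n)\) rather than \(F^2/(\mu_n\NN_n)\), and it is this remainder that \(\NN_n\ge e^{7.4}\) absorbs into the \(\log\NN_n\) term.
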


\begin{lemma}\label{lem:emp-risk-bound}
Let \(X\) satisfy the L\'evy-driven SDE \eqref{eq:model}, and let \(\ClassF\) be a pointwise measurable class of real-valued functions supported in \([0,1]^d\). Assume that \(\abs{f}\le F\) for all \(f\in\ClassF\cup\{f_0\}\) with \(F\ge 1\), and write \(N_n:=\NN(\delta,\ClassF,\norminf{\cdot})\). Then, under Assumptions~\ref{asm:coefficients}, \ref{asm:sampling}, and~\ref{asm:levy-tail}, for every \(\varepsilon\in(0,1)\) there exists a constant \(C_\varepsilon>0\) depending only on
\((C_b,C_b',C_\Sigma,C_\Sigma',C_\Xi,C_\Xi',C_\nu,C_m,\lambda,\varepsilon)\)
such that
\[
\EmpRisk_n(\widehat f_n,f_0)
\le
\frac{1}{1-\varepsilon}\Psi_n^{\ClassF}(\widehat f_n)
+
\frac{1+\varepsilon}{1-\varepsilon}
\inf_{f\in\ClassF}\Risk_n(f,f_0)
+
C_\varepsilon F^2\,\gamma^{(L)}_{\delta,n},
\]
where
\[
\gamma^{(L)}_{\delta,n}
:=
\Delta+\delta
+\frac{1}{\sqrt{n\Delta}}
\int_0^{\delta}\log \NN(u,\ClassF,\norminf{\cdot})\,du
+\frac{\log N_n+1}{n\Delta}.
\]
\end{lemma}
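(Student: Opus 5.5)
We follow the classical least-squares comparison argument of Schmidt-Hieber and Oga--Koike, with the noise decomposed according to \eqref{eq:model}. Write
\[
Y_{k\Delta}=f_0(X_{k\Delta})+R_{k\Delta}+\xi_{k\Delta}+\zeta_{k\Delta}
\quad\text{on }\{X_{k\Delta}\in[0,1]^d\}.
\]
Here \(R_{k\Delta}:=\Delta^{-1}\int_{k\Delta}^{(k+1)\Delta}(b^i(X_{s-})-b^i(X_{k\Delta}))\,ds\) is the discretization remainder. The term \(\xi_{k\Delta}\) is \(\Delta^{-1}\) times the Brownian increment of \(\int\Sigma^i(X_{s-})\,dW_s\), and \(\zeta_{k\Delta}\) is \(\Delta^{-1}\) times the increment of the compensated Poisson integral \(\int\!\int\Xi^i(X_{s-})z\,\widetilde N(ds,dz)\). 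Since every \(f\in\ClassF\) and \(f_0\) vanish off \([0,1]^d\), only indices with \(X_{k\Delta}\in[0,1]^d\) enter the cross terms.

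For any fixed \(f\in\ClassF\), the definition of \(\Psi_n^{\ClassF}\) gives \(\bbE[\ClassQ_n(\widehat f_n)]\le\Psi_n^{\ClassF}(\widehat f_n)+\bbE[\ClassQ_n(f)]\). Expanding the squares then yields
\[
\EmpRisk_n(\widehat f_n,f_0)\le \Psi_n^{\ClassF}+\EmpRisk_n(f,f_0)+\frac2n\bbE\sum_k(\widehat f_n-f)(X_{k\Delta})\,(R_{k\Delta}+\xi_{k\Delta}+\zeta_{k\Delta}).
\]
We have \(\EmpRisk_n(f,f_0)=\Risk_n(f,f_0)\), because for a deterministic \(f\) both are expectations of the same functional of a copy of \(X\).

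We treat the three cross terms separately, bounding each by \(\varepsilon\EmpRisk_n(\widehat f_n,f_0)+\varepsilon\Risk_n(f,f_0)\) plus a remainder of order \(F^2\gamma^{(L)}_{\delta,n}\). Rearranging then gives the factors \(1/(1-\varepsilon)\) and \((1+\varepsilon)/(1-\varepsilon)\).

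\emph{Discretization term.} By Assumption~\ref{asm:coefficients} and moment bounds for increments of \eqref{eq:model} (BDG for the Brownian part, Kunita-type bounds with \(C_\nu\) for the jump part), we get \(\bbE[R_{k\Delta}^2\one_{[0,1]^d}(X_{k\Delta})]\lesssim\Delta\). Cauchy--Schwarz and Young then give \(\varepsilon\EmpRisk_n+\varepsilon\Risk_n(f,f_0)+C_\varepsilon\Delta\). A linear-growth bound for \(b\) off \([0,1]^d\) is needed here and follows from the Lipschitz assumption and \(C_b\).

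\emph{Continuous martingale term.} This is handled as in Oga--Koike. Replace \(\widehat f_n\) by the nearest element \(g_j\) of a \(\delta\)-net, which costs \(F\delta\)-type errors after bounding \(\bbE\frac1n\sum|\xi_{k\Delta}|\lesssim\Delta^{-1/2}\). Then \(\frac1n\sum_k(g_j-f)(X_{k\Delta})\xi_{k\Delta}\) is a continuous martingale at time \(n\Delta\) with quadratic variation \(\le C_\Sigma^2\,\Delta^{-1}\cdot\frac1n\sum(g_j-f)^2(X_{k\Delta})\cdot n^{-1}\). The self-normalized inequality applied over \(N_n\) elements yields \(\varepsilon\EmpRisk+C_\varepsilon F^2(\log N_n+1)/(n\Delta)\). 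The entropy integral enters via chaining of the net-approximation error, which is the \(\frac{1}{\sqrt{n\Delta}}\int_0^\delta\log\NN\) term.

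\emph{Jump term (main obstacle).} Set \(h:=(g_j-f)/(2F)\), so that \(|h|\le1\). Let \(M^h_t:=\int_0^t\!\int h(X_{\lfloor s/\Delta\rfloor\Delta})\,\Xi^i(X_{s-})z\,\widetilde N(ds,dz)\). For \(|\theta|\le a<\lambda\) divided by \(C_\Xi\), the process
\[
\exp\Bigl(\theta M^h_t-\int_0^t\!\int\bigl(e^{\theta u}-1-\theta u\bigr)\big|_{u=h\,\Xi^i z}\,\nu(dz)\,ds\Bigr)
\]
is a supermartingale. Since \(e^{x}-1-x\le \tfrac{x^2}{2}e^{|x|}\), the compensator is bounded by \(\frac{\theta^2}{2}\Lambda(aC_\Xi)\,C_\Xi^2\int_0^t h^2\,ds\). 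This gives a Bernstein-type bound: with probability \(\ge1-e^{-x}\),
\[
M^h_{n\Delta}\le \varepsilon'\Delta\textstyle\sum_k h^2(X_{k\Delta})+C(x/\varepsilon'+x).
\]
Here the linear \(x\) term reflects the \(\psi_1\) regime. Taking a union over the net, dividing by \(n\Delta\), and integrating the tail gives \(\varepsilon\EmpRisk+C_\varepsilon F^2(\log N_n+1)/(n\Delta)\); the sub-exponential part is of the same order. The delicate step is the net-approximation error: \(\bbE\sup_{\|g-g'\|\le\delta}\frac1n\sum(g-g')(X_{k\Delta})\zeta_{k\Delta}\). We bound it by \(\psi_1\)-chaining (Appendix~\ref{app:psi1-chaining}) using the above increments, which are mixed sub-Gaussian/sub-exponential with respect to \(\norminf{\cdot}\). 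We expect the chaining bound \(\int_0^\delta\log\NN(u)\,du/\sqrt{n\Delta}\), plus a \(\delta\) term, where the \(\log\NN\) (rather than \(\sqrt{\log\NN}\)) comes from the sub-exponential part.

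Combining the three bounds with \(\varepsilon\) rescaled and taking the infimum over \(f\) concludes the proof.
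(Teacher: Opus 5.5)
Your architecture matches the paper's. You use the least-squares comparison identity and Cauchy--Schwarz/Young for the drift remainder. For the jumps you use an exponential supermartingale for the compensated Poisson integral with the compensator controlled through \(\Lambda\), a union bound over a \(\delta\)-net, and \(\psi_1\)-chaining for the local oscillation.

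The differences in the jump part are cosmetic:
\begin{itemize}
\item You keep \(\widehat f_n-f\) in the cross terms and split \((g_j-f)^2\) between \(\EmpRisk_n(\widehat f_n,f_0)\) and \(\Risk_n(f,f_0)\). The paper instead uses \(\bbE[(\bar f-f_0)(X_{k\Delta})(Z^i_{k\Delta}+J^i_{k\Delta})]=0\) to pass to \(\widehat f_n-f_0\).
\item You integrate a Bernstein tail over the net, whereas the paper applies Jensen to \(\bbE\exp(\cdot)\le N_n\) at the random index.
\end{itemize}
Your increments do give \(\norm{\widetilde Z^J(f)-\widetilde Z^J(g)}_{\psi_1}\le C\norminf{f-g}\) with \(C\) free of \(n,\Delta\), because \(n\Delta\ge2\) allows \(\theta\asymp1/\norminf{f-g}\). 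This is what Lemma~\ref{lem:jump-increment-tail} and Lemma~\ref{lem:jump-chaining} extract. One small slip: for \(\abs{\theta}\le a/C_\Xi\) the compensator constant is \(\Lambda(a)\), not \(\Lambda(aC_\Xi)\).

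The step that fails as written is in your Brownian paragraph, in two places.
\begin{itemize}
\item Replacing \(\widehat f_n\) by \(g_j\) via \(\bbE\frac1n\sum_k\abs{\xi_{k\Delta}}\one_{A_{k\Delta}}\lesssim\Delta^{-1/2}\) costs \(\delta\Delta^{-1/2}\). This is not of order \(\gamma^{(L)}_{\delta,n}\): with \(\delta=(n\Delta)^{-1}\) and \(\Delta=n^{-3/4}\) it equals \(n^{1/8}\to\infty\).
\item Your deterministic bound on the quadratic variation, and any sub-Gaussian increment bound needed for chaining, presuppose \(\abs{\Sigma_i(X_s)}\le C_\Sigma\) for \(s\in(k\Delta,(k+1)\Delta]\). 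But \(C_\Sigma\) bounds \(\Sigma\) only on \([0,1]^d\), and the path can leave the cube within the interval.
\end{itemize}

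The missing device is the split \(Z^i_{k\Delta}=Z^{i,\prime}_{k\Delta}+Z^{i,\prime\prime}_{k\Delta}\) used in the paper, following Oga--Koike:
\begin{itemize}
\item The frozen part \(Z^{i,\prime}\) uses \(\Sigma_i(X_{k\Delta})\), which is bounded by \(C_\Sigma\) on \(A_{k\Delta}\). It therefore has genuinely sub-Gaussian increments and is chained, giving the \(\int_0^\delta\sqrt{\log\NN}\) term of Lemma~\ref{lem:diffusion-type-estimates}(vii).
\item The remainder satisfies \(\bbE[(Z^{i,\prime\prime}_{k\Delta})^2\one_{A_{k\Delta}}]\le\frac12C_\Sigma'^2C_{\Lip}=O(1)\) by the localized increment estimate. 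So the crude replacement is applied only to \(Z^{i,\prime\prime}\) and costs \(O(\delta)\).
\item In the net term, the non-frozen part of the quadratic variation is controlled in expectation through the self-normalized bound. The price is the extra \(F^2\Delta\) term in Lemma~\ref{lem:diffusion-type-estimates}(v).
\end{itemize}
No such split is needed for the jumps because \(\Xi\) is bounded on all of \(\bbR^d\), which your jump argument implicitly and correctly uses.
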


\subsection{Proof of Lemma~\ref{lem:emp-risk-bound}}

Following Oga and Koike~\cite[Lemma~4.2]{oga-koi24}, set 
\[ 
N_n:=\NN(\delta,\ClassF,\norminf{\cdot}). 
\] 
Fix a \(\delta\)-net \(\{f_1,\dots,f_{N_n}\}\) of \(\ClassF\) with respect to \(\norminf{\cdot}\), and choose a data-dependent index \(j^*\in\{1,\dots,N_n\}\) such that 
\[ 
\norminf{\widehat f_n-f_{j^*}}\le\delta. 
\]
We now introduce the regression decomposition. Fix \(i\in\{1,\dots,d\}\).
Let \(\Sigma_i(x)\) and \(\Xi_i(x)\) denote the \(i\)-th rows of the matrices
\(\Sigma(x)\) and \(\Xi(x)\), respectively. For \(k=0,1,\dots,n-1\), define
\[
I_{k\Delta}^i
:=
\frac1\Delta
\int_{k\Delta}^{(k+1)\Delta}
\bigl(b^i(X_s)-b^i(X_{k\Delta})\bigr)\,ds,
\]
\[
\begin{aligned}
Z_{k\Delta}^{i,\prime}
&:=
\frac1\Delta
\int_{k\Delta}^{(k+1)\Delta}
\Sigma_i(X_{k\Delta})\,dW_s,\\
Z_{k\Delta}^{i,\prime\prime}
&:=
\frac1\Delta
\int_{k\Delta}^{(k+1)\Delta}
\bigl(\Sigma_i(X_{s-})-\Sigma_i(X_{k\Delta})\bigr)\,dW_s,
\end{aligned}
\]
and
\[
J_{k\Delta}^i
:=
\frac1\Delta
\int_{k\Delta}^{(k+1)\Delta}
\int_{\bbR^d\setminus\{0\}}
\Xi_i(X_{s-})z\,\widetilde N(ds,dz).
\]
Set
\[
Z_{k\Delta}^i
:=
Z_{k\Delta}^{i,\prime}+Z_{k\Delta}^{i,\prime\prime}.
\]
Then
\begin{equation}\label{eq:regression}
Y_{k\Delta}
=
b^i(X_{k\Delta})+I_{k\Delta}^i+Z_{k\Delta}^i+J_{k\Delta}^i.
\end{equation}
Compared with the diffusion setting of Oga and Koike~\cite{oga-koi24},
the present decomposition contains the additional compensated Poisson
term \(J_{k\Delta}^i\).

We next decompose the expected empirical risk into the drift, Brownian,
and jump contributions. The drift and Brownian terms are treated by the
same arguments as in Oga and Koike~\cite[Corollaries~4.1--4.2 and
Lemmas~4.8, 4.10--4.12]{oga-koi24}. The jump term is the
new part and will be handled separately.

\begin{lemma}
\label{lem:emp-risk-decomposition}
For every non-random \(\bar f\in\ClassF\),
\[
\begin{aligned}
\EmpRisk_n(\widehat f_n,f_0)
&=
\overline\Psi_n(\widehat f_n,\bar f)
+\EmpRisk_n(\bar f,f_0) \\
&\quad
+2\bbE\biggl[
\frac1n\sum_{k=0}^{n-1}
(\widehat f_n-\bar f)(X_{k\Delta})I_{k\Delta}^i
\biggr] \\
&\quad
+2\bbE\biggl[
\frac1n\sum_{k=0}^{n-1}
(\widehat f_n-f_0)(X_{k\Delta})Z_{k\Delta}^i
\biggr] \\
&\quad
+2\bbE\biggl[
\frac1n\sum_{k=0}^{n-1}
(\widehat f_n-f_0)(X_{k\Delta})J_{k\Delta}^i
\biggr],
\end{aligned}
\]
where
\[
\overline\Psi_n(\widehat f_n,\bar f)
:=
\bbE\bigl[
\ClassQ_n(\widehat f_n)-\ClassQ_n(\bar f)
\bigr].
\]
\end{lemma}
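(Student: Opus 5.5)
The identity in Lemma~\ref{lem:emp-risk-decomposition} is purely algebraic. It holds pathwise before taking expectations, so the plan is to expand $\ClassQ_n$ using the regression decomposition \eqref{eq:regression} and regroup. Write $\varepsilon_k:=I_{k\Delta}^i+Z_{k\Delta}^i+J_{k\Delta}^i$. Since $f_0=b^i\one_{[0,1]^d}$, the function $b^i$ and $f_0$ need not agree outside $[0,1]^d$. Define $r_k:=b^i(X_{k\Delta})-f_0(X_{k\Delta})$, so that $Y_{k\Delta}=f_0(X_{k\Delta})+r_k+\varepsilon_k$. Every $f\in\ClassF$ is supported in $[0,1]^d$, and so is $f_0$. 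Hence for $f,g\in\ClassF\cup\{f_0\}$ we have $(f-g)(X_{k\Delta})\,r_k=0$, because $r_k=0$ whenever $X_{k\Delta}\in[0,1]^d$ and $f-g$ vanishes otherwise.

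\textbf{Step 1 (expand the loss).} For any $f\in\ClassF$,
\[
(Y_{k\Delta}-f(X_{k\Delta}))^2=(f_0-f)^2(X_{k\Delta})+2(f_0-f)(X_{k\Delta})(r_k+\varepsilon_k)+(r_k+\varepsilon_k)^2 .
\]
Applying this to $\widehat f_n$ and $\bar f$ and subtracting, the $f$-independent term $(r_k+\varepsilon_k)^2$ cancels. By the support observation above, the $r_k$ cross terms also vanish. This leaves
\[
\ClassQ_n(\widehat f_n)-\ClassQ_n(\bar f)=\frac1n\sum_{k}\Bigl[(\widehat f_n-f_0)^2-(\bar f-f_0)^2\Bigr](X_{k\Delta})-\frac2n\sum_k(\widehat f_n-\bar f)(X_{k\Delta})\varepsilon_k .
\]
Taking expectations and rearranging gives
\[
\EmpRisk_n(\widehat f_n,f_0)=\overline\Psi_n(\widehat f_n,\bar f)+\EmpRisk_n(\bar f,f_0)+2\bbE\Bigl[\frac1n\sum_k(\widehat f_n-\bar f)(X_{k\Delta})\varepsilon_k\Bigr].
\]

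\textbf{Step 2 (split the noise).} Split $\varepsilon_k$ into its three pieces. The $I$ part already has the stated form. For the $Z$ and $J$ parts, write $\widehat f_n-\bar f=(\widehat f_n-f_0)-(\bar f-f_0)$. It then remains to show
\[
\bbE\bigl[(\bar f-f_0)(X_{k\Delta})Z_{k\Delta}^i\bigr]=\bbE\bigl[(\bar f-f_0)(X_{k\Delta})J_{k\Delta}^i\bigr]=0 .
\]
This holds because $\bar f$ is non-random and bounded, and $(\bar f-f_0)(X_{k\Delta})$ is $\mathcal F_{k\Delta}$-measurable. Moreover, $Z_{k\Delta}^i$ and $J_{k\Delta}^i$ are increments over $[k\Delta,(k+1)\Delta]$ of a Brownian stochastic integral and a compensated Poisson integral. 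Their conditional expectations given $\mathcal F_{k\Delta}$ vanish, provided these are true martingales rather than only local martingales.

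\textbf{Main obstacle: integrability.} The only genuine point is justifying square integrability of the integrands, so that the integrals are $L^2$-martingales and all expectations in the identity are finite. For $J$, we have $\bbE\int\abs{\Xi_i(X_{s-})z}^2\nu(dz)\,ds\le C_\Xi^2C_\nu\Delta<\infty$ by Assumption~\ref{asm:coefficients} and Remark~\ref{rem:Lambda-finite}. For $Z$, linear growth of $\Sigma$ (from Lipschitz continuity) combined with a standard second-moment bound $\sup_{s\le n\Delta}\bbE\abs{X_s}^2<\infty$ suffices. That moment bound follows from Gronwall's inequality under the Lipschitz and $C_\nu<\infty$ conditions, assuming $\bbE\abs{\eta}^2<\infty$; otherwise one localizes via conditioning on $\eta$. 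The same moment bound makes $I_{k\Delta}^i$ and $Y_{k\Delta}$ square integrable, so $\overline\Psi_n$ is well defined and the rearrangement of expectations is legitimate.
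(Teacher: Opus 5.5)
Your proposal is correct and is essentially the paper's argument: expand the least-squares contrasts using \eqref{eq:regression}, note that $b^i-f_0$ vanishes on $[0,1]^d$ while every candidate function vanishes off it, and remove the $\bar f$-cross terms with $Z^i_{k\Delta}$ and $J^i_{k\Delta}$ by the martingale property, exactly as in Oga--Koike's Lemma~4.7 with the extra jump term. For the integrability step you need no moment assumption on $\eta$: every cross term carries a factor vanishing off $[0,1]^d$, so it suffices to bound $\one_{A_{k\Delta}}\Sigma_i(X_{s-})$ in $L^2$ via Lemma~\ref{lem:diffusion-type-estimates}(i), with $A_{k\Delta}=\{X_{k\Delta}\in[0,1]^d\}$; this is cleaner than your conditioning-on-$\eta$ fallback, which by itself still needs a bound uniform in the starting point to make the unconditional expectation finite.
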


\begin{proof}
This is obtained by expanding the least-squares contrasts and using
the regression decomposition \eqref{eq:regression}, exactly as in
Oga and Koike~\cite[Lemma~4.7]{oga-koi24}. The only difference is that
the present decomposition contains the additional jump martingale term
\(J_{k\Delta}^i\), which gives the last term above.
\end{proof}

\begin{lemma}
\label{lem:diffusion-type-estimates}
The following estimates hold.

\begin{enumerate}
\item[(i)] Define \(A_t:=\{X_t\in[0,1]^d\}\) for every \(t\ge0\). Let
\[
C_{\Lip}
:=
6\bigl(C_b^2+C_\Sigma^2+C_\Xi^2C_\nu\bigr)
\exp\!\Bigl\{
6\bigl(C_b'^2+C_\Sigma'^2+C_\Xi'^2C_\nu\bigr)
\Bigr\}.
\]
For any \(k\in\{0,\dots,n-1\}\) and
\(s\in[k\Delta,(k+1)\Delta]\),
\[
\bbE\bigl[
\abs{X_s-X_{k\Delta}}^2\one_{A_{k\Delta}}
\bigr]
\le
C_{\Lip}(s-k\Delta).
\]

\item[(ii)] For every \(k\in\{0,\dots,n-1\}\),
\[
\bbE\bigl[
(I_{k\Delta}^i)^2\one_{A_{k\Delta}}
\bigr]
\le
\frac12 C_b'^2C_{\Lip}\Delta .
\]

\item[(iii)] For every \(k\in\{0,\dots,n-1\}\),
\[
\bbE\bigl[
(Z_{k\Delta}^{i,\prime\prime})^2
\one_{A_{k\Delta}}
\bigr]
\le
\frac12 C_\Sigma'^2 C_{\Lip}.
\]

\item[(iv)] For every \(\varepsilon>0\),
\[
\bbE\biggl[\biggl\lvert \frac1n\sum_{k=0}^{n-1}(\widehat f_n-\bar f)(X_{k\Delta})I_{k\Delta}^i \biggr\rvert \biggr]
\le
\frac{\varepsilon}{4}\EmpRisk_n(\widehat f_n,f_0)
+
\frac{\varepsilon}{2}\EmpRisk_n(\bar f,f_0)
+
\frac{3C_b'^2C_{\Lip}}{4\varepsilon}\Delta .
\]

\item[(v)] We have
\[
\begin{aligned}
&\bbE\biggl[
\biggl\lvert
\frac1n\sum_{k=0}^{n-1}
(f_{j^*}-f_0)(X_{k\Delta})Z_{k\Delta}^i
\biggr\rvert
\biggr]
\le
\biggl\{
\frac{4C_\Sigma^2}{n\Delta}
\bigl(2\log N_n+\log 2\bigr)
\biggr\}^{1/2} \\
&\hspace{35mm}\times
\biggl\{
\EmpRisk_n(\widehat f_n,f_0)
+4F\delta
+
\frac{2F^2C_\Sigma'^2C_{\Lip}}{C_\Sigma^2}\Delta
\biggr\}^{1/2}.
\end{aligned}
\]

\item[(vi)] We have
\[
\bbE\biggl[\biggl\lvert \frac1n\sum_{k=0}^{n-1}(\widehat f_n-f_{j^*})(X_{k\Delta})Z_{k\Delta}^{i,\prime\prime} \biggr\rvert \biggr]
\le
\delta
\biggl(
\frac12 C_\Sigma'^2C_{\Lip}
\biggr)^{1/2}.
\]

\item[(vii)] There exists a constant \(C_Z>0\), depending only on
\(C_\Sigma\), such that
\[
\bbE\biggl[\biggl\lvert \frac1n\sum_{k=0}^{n-1}(\widehat f_n-f_{j^*})(X_{k\Delta})Z_{k\Delta}^{i,\prime} \biggr\rvert \biggr]
\le
\frac{C_Z}{\sqrt{n\Delta}}
\int_0^\delta
\sqrt{
\log \NN(u,\ClassF,\norminf{\cdot})
}\,du .
\]
\end{enumerate}
\end{lemma}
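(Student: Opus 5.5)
We will prove the seven estimates in order, since each later item uses the earlier ones. The arguments follow Oga and Koike~\cite{oga-koi24}. The only genuinely new input is that the moment computations must also account for the compensated Poisson integral.

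\emph{Item (i).} On \(A_{k\Delta}\), write \(X_s-X_{k\Delta}\) as the sum of the drift, Brownian, and jump integrals over \([k\Delta,s]\). We bound \(\bbE[\abs{\cdot}^2\one_{A_{k\Delta}}]\) by \(3\) times the sum of the three second moments. The factor \(\one_{A_{k\Delta}}\) is \(\mathcal F_{k\Delta}\)-measurable, so it can be moved inside the stochastic integrals. For the three terms we use the following tools:
\begin{itemize}
\item Cauchy--Schwarz for the drift integral;
\item the It\^o isometry for the Brownian integral;
\item the isometry \(\bbE\bigl|\int\!\int \Xi(X_{u-})z\,\widetilde N\bigr|^2=\bbE\int\!\int\abs{\Xi(X_{u-})z}^2\nu(dz)\,du\) for the jump integral.
\end{itemize}
Next, the Lipschitz bounds give \(\abs{b(X_u)}^2\le 2C_b^2+2C_b'^2\abs{X_u-X_{k\Delta}}^2\) on \(A_{k\Delta}\), and similarly for \(\Sigma\) and for \(\Xi\) (the latter weighted by \(C_\nu\)). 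This yields, with \(\Delta\le1\),
\[
h(s)\le 6(C_b^2+C_\Sigma^2+C_\Xi^2C_\nu)(s-k\Delta)+6(C_b'^2+C_\Sigma'^2+C_\Xi'^2C_\nu)\int_{k\Delta}^s h(u)\,du,
\]
where \(h(s):=\bbE[\abs{X_s-X_{k\Delta}}^2\one_{A_{k\Delta}}]\). We need \(h\) to be finite a priori. This holds because \(X\) has a finite local second moment on \(A_{k\Delta}\), a standard consequence of the Lipschitz coefficients and \(C_\nu<\infty\); if needed, we localize with stopping times. Gronwall's inequality then gives \(h(s)\le C_{\Lip}(s-k\Delta)\).

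\emph{Items (ii) and (iii).} For (ii), Jensen's inequality gives \((I^i_{k\Delta})^2\le\Delta^{-1}\int_{k\Delta}^{(k+1)\Delta}C_b'^2\abs{X_s-X_{k\Delta}}^2ds\). Applying (i) and integrating \(s-k\Delta\) gives \(\tfrac12C_b'^2C_{\Lip}\Delta\). For (iii), the It\^o isometry together with (i) gives \(\Delta^{-2}\int C_\Sigma'^2C_{\Lip}(s-k\Delta)\,ds=\tfrac12C_\Sigma'^2C_{\Lip}\).

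\emph{Item (iv).} Every \(f\in\ClassF\) is supported in \([0,1]^d\), so the summand vanishes off \(A_{k\Delta}\). We apply Young's inequality in the form \(\abs{ab}\le\frac{\varepsilon}{4}\cdot\frac{a^2}{2}\cdot 2+\dots\). We then split \((\widehat f_n-\bar f)^2\le 2(\widehat f_n-f_0)^2+2(\bar f-f_0)^2\), tuning the constants to match the displayed ones, and finally use (ii).

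\emph{Item (vi).} We bound \(\abs{\widehat f_n-f_{j^*}}\le\delta\) and use \(\one_{A_{k\Delta}}\). Cauchy--Schwarz together with (iii) then gives the claim.

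\emph{Item (v).} This is the main step. We first split \(Z=Z'+Z''\). For the \(Z''\) part we use Cauchy--Schwarz and (iii), absorbing it into the \(\Delta\)-term of the second factor. For the \(Z'\) part, we follow Oga and Koike~\cite[Lemma~4.10]{oga-koi24}. For each net element \(f_j\), consider the continuous martingale
\[
M^j_t=\int_0^t\sum_k(f_j-f_0)(X_{k\Delta})\Sigma_i(X_{k\Delta})\one_{(k\Delta,(k+1)\Delta]}(u)\,dW_u .
\]
Its quadratic variation is at most \(C_\Sigma^2\Delta\sum_k(f_j-f_0)^2(X_{k\Delta})\). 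We normalize by \(\sqrt{\langle M^j\rangle}\) and apply the self-normalized exponential inequality for continuous martingales, combined with a union bound over the \(N_n\) net elements. This gives \(\bbE\max_j (M^j)^2/(\langle M^j\rangle+\cdots)\lesssim 2\log N_n+\log2\). Cauchy--Schwarz then produces the product form. In that product, the factor \(\bbE\frac1n\sum(f_{j^*}-f_0)^2\) is bounded by \(\EmpRisk_n(\widehat f_n,f_0)+4F\delta\). The principal difficulty is the bookkeeping: the data-dependent index \(j^*\) has to be handled by the maximum over \(j\). Because the noise here is Brownian only, this step transfers verbatim from the diffusion case.

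\emph{Item (vii).} This is a chaining bound for the process \(f\mapsto n^{-1}\sum_k(\widehat f_n-f_{j^*})(X_{k\Delta})Z'_{k\Delta}\). It is sub-Gaussian with respect to \(\norminf{\cdot}\), with variance proxy \(C_\Sigma^2/(n\Delta)\), because each increment is a Brownian integral with bounded integrand. Dudley's entropy integral over the ball of radius \(\delta\) therefore gives the stated bound, exactly as in \cite[Lemma~4.12]{oga-koi24}.
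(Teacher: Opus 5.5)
Your items (i)--(iv), (vi) and (vii) follow the paper's route and are fine. One bookkeeping point in (iv): the plain split $(\widehat f_n-\bar f)^2\le2(\widehat f_n-f_0)^2+2(\bar f-f_0)^2$ gives $C_b'^2C_{\Lip}\Delta/\varepsilon$ rather than $3C_b'^2C_{\Lip}\Delta/(4\varepsilon)$. To get the displayed constants, use $(a+b)^2\le\frac32a^2+3b^2$ with Young weight $\varepsilon/3$.

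\emph{The gap is in (v).} You split $Z^i_{k\Delta}=Z^{i,\prime}_{k\Delta}+Z^{i,\prime\prime}_{k\Delta}$ and handle the $Z^{i,\prime\prime}$ part by Cauchy--Schwarz and (iii). But (iii) is only an $O(1)$ bound: the factor $\Delta^{-1}$ in $Z^{i,\prime\prime}_{k\Delta}$ cancels the gain from (i). So this step only yields
\[
\bbE\Bigl[\Bigl\lvert\frac1n\sum_{k=0}^{n-1}(f_{j^*}-f_0)(X_{k\Delta})Z^{i,\prime\prime}_{k\Delta}\Bigr\rvert\Bigr]
\le
\bigl(\EmpRisk_n(\widehat f_n,f_0)+4F\delta\bigr)^{1/2}\Bigl(\frac12C_\Sigma'^2C_{\Lip}\Bigr)^{1/2},
\]
with no factor $(n\Delta)^{-1/2}$. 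On the right-hand side of (v), the $\Delta$-term appears only multiplied by $\frac{4C_\Sigma^2}{n\Delta}(2\log N_n+\log2)$. It therefore contributes just $\{8F^2C_\Sigma'^2C_{\Lip}(2\log N_n+\log2)/n\}^{1/2}$, so an $O(1)\cdot(\EmpRisk_n(\widehat f_n,f_0)+4F\delta)^{1/2}$ term cannot be absorbed there. Fed into the proof of Lemma~\ref{lem:emp-risk-bound}, it would leave an $O(1)$ remainder after Young's inequality. Cauchy--Schwarz discards exactly the martingale cancellation across $k$ that is needed. Because $j^*$ is data-dependent, you also cannot recover that cancellation by orthogonality for a fixed function.

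\emph{The fix:} the paper takes this route over from Oga and Koike's Lemma~4.10, and it does not split $Z$ in (v). For each net element, apply your self-normalized/union-bound argument to the full martingale
\[
M^j_t:=\int_0^t\sum_{k=0}^{n-1}(f_j-f_0)(X_{k\Delta})\Sigma_i(X_{u-})\one_{(k\Delta,(k+1)\Delta]}(u)\,dW_u,
\qquad
\frac{M^j_{n\Delta}}{n\Delta}=\frac1n\sum_{k=0}^{n-1}(f_j-f_0)(X_{k\Delta})Z^i_{k\Delta},
\]
whose quadratic variation is now random. On $A_{k\Delta}$, combine $\abs{\Sigma_i(X_u)}^2\le2C_\Sigma^2+2C_\Sigma'^2\abs{X_u-X_{k\Delta}}^2$, item (i), and $(f_{j^*}-f_0)^2\le(\widehat f_n-f_0)^2+4F\delta$ to obtain
\[
\bbE\bigl[\langle M^{j^*}\rangle_{n\Delta}\bigr]
\le
2C_\Sigma^2n\Delta\Bigl\{\EmpRisk_n(\widehat f_n,f_0)+4F\delta+\frac{2F^2C_\Sigma'^2C_{\Lip}}{C_\Sigma^2}\Delta\Bigr\}.
\]
This is exactly the second factor in (v). The $\Delta$-term comes from the state dependence of the integrand in the quadratic variation, handled inside the maximal inequality, not from a separate Cauchy--Schwarz estimate.

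Alternatively, you may keep the split. Then the $Z^{i,\prime\prime}$ part must go through the same maximal argument over the net. This works because, by (i), the corresponding quadratic variations have expectation at most $2F^2C_\Sigma'^2C_{\Lip}n\Delta^2$. The crude $O(1)$ bound (iii) is harmless only in (vi), where it is multiplied by $\delta$.
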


\begin{proof}
We first prove (i). Using the It\^o isometry and the \(L^2\)-isometry for stochastic
integrals with respect to compensated Poisson random measures
(Applebaum~\cite[Lemma~4.2.2, its subsequent extension, and
Section~4.3.2]{app09}),
\[
\begin{aligned}
&\bbE\bigl[
\abs{X_s-X_{k\Delta}}^2\one_{A_{k\Delta}}
\bigr] \\
&=
\bbE\Biggl[
\sum_{i=1}^d
\Biggl(
\int_{k\Delta}^{s} b^i(X_{u-})\,du
+
\int_{k\Delta}^{s} \Sigma^i(X_{u-})\,dW_u
\\[-0.2em]
&\hspace{8em}
+
\int_{k\Delta}^{s}\int_{\bbR^d\setminus\{0\}}
\Xi^i(X_{u-})z\,\widetilde N(du,dz)
\Biggr)^2
\one_{A_{k\Delta}}
\Biggr] \\
&\le
3\bbE\Biggl[
\sum_{i=1}^d
\Biggl(
\int_{k\Delta}^{s} b^i(X_{u-})\one_{A_{k\Delta}}\,du
\Biggr)^2
+
\sum_{i=1}^d
\int_{k\Delta}^{s}
\Sigma^i(X_{u-})\Sigma^i(X_{u-})^\top
\one_{A_{k\Delta}}\,du
\\[-0.2em]
&\hspace{4em}
+
\sum_{i=1}^d
\int_{k\Delta}^{s}\int_{\bbR^d\setminus\{0\}}
\bigl(\Xi^i(X_{u-})z\bigr)^2
\one_{A_{k\Delta}}\,\nu(dz)\,du
\Biggr] \\
&\le
3\bbE\Biggl[
\int_{k\Delta}^{s}
\sum_{i=1}^d
\Bigl\{
\bigl(b^i(X_{u-})\bigr)^2
+
\Sigma^i(X_{u-})\Sigma^i(X_{u-})^\top
\\[-0.2em]
&\hspace{11em}
+
C_\nu
\Xi^i(X_{u-})\Xi^i(X_{u-})^\top
\Bigr\}
\one_{A_{k\Delta}}\,du
\Biggr],
\end{aligned}
\]
where the last inequality follows from the Schwarz inequality and
\(s-k\Delta\le1\), and from
\[
\int_{\bbR^d\setminus\{0\}}
\bigl(\Xi^i(X_{u-})z\bigr)^2\,\nu(dz)
\le
C_\nu\,
\Xi^i(X_{u-})\Xi^i(X_{u-})^\top .
\]
Additionally, for any \(u\in[k\Delta,(k+1)\Delta]\),
\[
\begin{aligned}
\bigl(b^i(X_{u-})\bigr)^2
&=
\bigl(b^i(X_{k\Delta})
+
(b^i(X_{u-})-b^i(X_{k\Delta}))\bigr)^2
\\
&\le
2\bigl(b^i(X_{k\Delta})\bigr)^2
+
2\bigl(b^i(X_{u-})-b^i(X_{k\Delta})\bigr)^2,
\end{aligned}
\]
and
\[
\begin{aligned}
\Sigma^i(X_{u-})\Sigma^i(X_{u-})^\top
&=
\sum_j
\bigl(
\Sigma^{ij}(X_{k\Delta})
+
(\Sigma^{ij}(X_{u-})-\Sigma^{ij}(X_{k\Delta}))
\bigr)^2 \\
&\le
2\sum_j\bigl(\Sigma^{ij}(X_{k\Delta})\bigr)^2
+
2\sum_j
\bigl(\Sigma^{ij}(X_{u-})-\Sigma^{ij}(X_{k\Delta})\bigr)^2,
\end{aligned}
\]
and similarly,
\[
\begin{aligned}
\Xi^i(X_{u-})\Xi^i(X_{u-})^\top
&\le
2\sum_j\bigl(\Xi^{ij}(X_{k\Delta})\bigr)^2
+
2\sum_j
\bigl(\Xi^{ij}(X_{u-})-\Xi^{ij}(X_{k\Delta})\bigr)^2 .
\end{aligned}
\]
Using these inequalities and Assumption~\ref{asm:coefficients}, we conclude
\[
\begin{aligned}
&\bbE\bigl[
\abs{X_s-X_{k\Delta}}^2\one_{A_{k\Delta}}
\bigr] \\
&\le
6\bbE\biggl[
\int_{k\Delta}^{s}
\Bigl\{
C_b^2+C_\Sigma^2+C_\Xi^2C_\nu
+
\bigl(C_b'^2+C_\Sigma'^2+C_\Xi'^2C_\nu\bigr)
\abs{X_{u-}-X_{k\Delta}}^2
\one_{A_{k\Delta}}
\Bigr\}
\,du
\biggr] \\
&=
6\bigl(C_b^2+C_\Sigma^2+C_\Xi^2C_\nu\bigr)(s-k\Delta) \\
&\qquad
+
6\bigl(C_b'^2+C_\Sigma'^2+C_\Xi'^2C_\nu\bigr)
\int_{k\Delta}^{s}
\bbE\bigl[
\abs{X_u-X_{k\Delta}}^2
\one_{A_{k\Delta}}
\bigr]\,du .
\end{aligned}
\]
In the last line, we used the fact that \(X_{u-}=X_u\) for Lebesgue-a.e.
\(u\), since \(X\) is c\`adl\`ag. Therefore, by Gronwall's inequality,
\[
\begin{aligned}
&
\bbE\bigl[
\abs{X_s-X_{k\Delta}}^2\one_{A_{k\Delta}}
\bigr]
\\
&\qquad\le
6\bigl(C_b^2+C_\Sigma^2+C_\Xi^2C_\nu\bigr)
(s-k\Delta)
\exp\!\Bigl\{
6\bigl(C_b'^2+C_\Sigma'^2+C_\Xi'^2C_\nu\bigr)(s-k\Delta)
\Bigr\}.
\end{aligned}
\]
Since \(s-k\Delta\le\Delta\le1\) by Assumption~\ref{asm:sampling},
(i) follows.

We next prove (ii)--(vii). The proof is the same as the diffusion proof
of Oga and Koike~\cite[Corollaries~4.1--4.2 and Lemmas~4.8,
4.10--4.12]{oga-koi24}, with the localized increment estimate in their
argument replaced by (i). Thus the constant appearing in those estimates
is replaced by the present \(C_{\Lip}\).

More precisely, (ii) and (iii) are the counterparts of their
Corollaries~4.1 and 4.2. Then (iv) follows from (ii) exactly as in their
Lemma~4.8. Finally, the Brownian estimates (v)--(vii) are obtained as in
their Lemmas~4.10--4.12; the only resulting change is that the Brownian
remainder estimate uses (iii), hence the displayed \(C_{\Lip}\)-dependent
constants.
\end{proof}

We now estimate the jump contribution in
Lemma~\ref{lem:emp-risk-decomposition}. The drift and Brownian terms in that
decomposition are controlled by Lemma~\ref{lem:diffusion-type-estimates}, so it
remains to handle the term containing \(J_{k\Delta}^i\).

We split the jump stochastic term as
\begin{equation}\label{eq:jump-net-osc-decomposition}
\begin{aligned}
\frac1n\sum_{k=0}^{n-1}
(\widehat f_n-f_0)(X_{k\Delta})J_{k\Delta}^i
&=
\frac1n\sum_{k=0}^{n-1}
(f_{j^*}-f_0)(X_{k\Delta})J_{k\Delta}^i  \\
&\quad+
\frac1n\sum_{k=0}^{n-1}
(\widehat f_n-f_{j^*})(X_{k\Delta})J_{k\Delta}^i .
\end{aligned}
\end{equation}
The first term is a finite-net term, while the second is a local oscillation
term. We bound the former by an exponential-supermartingale argument combined
with a union bound over the net, and the latter by deriving a \(\psi_1\)
increment bound and then applying a chaining argument.

The sub-exponential moment assumption on the L\'evy measure is used precisely in these
exponential estimates: it guarantees the finiteness of the exponential
compensators. Recall that, for \(a\in[0,\lambda)\),
\[
\Lambda(a):=
\int_{\bbR^d\setminus\{0\}}\abs{z}^2e^{a\abs{z}}\,\nu(dz)<\infty
\]
by Assumption~\ref{asm:levy-tail}; see also
Remark~\ref{rem:Lambda-finite}. This quantity enters the compensator bounds
through \(e^x-1-x\le x^2e^{\abs{x}}\) for \(x\in\bbR\).

\begin{lemma}
\label{lem:jump-exponential-local-martingale}
Let \(h=h(s,z)\) be a predictable integrand on
\([0,\infty)\times(\bbR^d\setminus\{0\})\), and fix \(\varepsilon>0\).
For \(t\ge0\), define
\[
M_t(h):=
\int_0^t\int_{\bbR^d\setminus\{0\}}
h(s,z)\,\widetilde N(ds,dz),
\]
\[
A_{\varepsilon,t}(h):=
\int_0^t\int_{\bbR^d\setminus\{0\}}
\bigl(e^{\varepsilon h(s,z)}-1-\varepsilon h(s,z)\bigr)
\,\nu(dz)\,ds,
\]
and
\[
Z_t^\varepsilon(h):=
\exp\bigl(\varepsilon M_t(h)-A_{\varepsilon,t}(h)\bigr).
\]
Assume that, for every \(t\ge0\),
\[
I_{\varepsilon,t}(h):=
\int_0^t\int_{\bbR^d\setminus\{0\}}
\abs{h(s,z)}^2 e^{\varepsilon\abs{h(s,z)}}\,\nu(dz)\,ds
<\infty
\]
almost surely. Then \(M(h)\) is a locally square-integrable local
martingale, \(A_{\varepsilon,t}(h)<\infty\) for every \(t\ge0\) almost surely,
and both \(Z^\varepsilon(h)\) and \(Z^\varepsilon(-h)\) are nonnegative local
martingales. In particular, they are supermartingales and
\[
\bbE[Z_t^\varepsilon(h)]\le1,
\qquad
\bbE[Z_t^\varepsilon(-h)]\le1,
\qquad t\ge0.
\]
Moreover,
\[
0\le A_{\varepsilon,t}(h)
\le
\varepsilon^2 I_{\varepsilon,t}(h),
\]
and
\[
A_{\varepsilon,t}(h)+A_{\varepsilon,t}(-h)
\le
\varepsilon^2 I_{\varepsilon,t}(h).
\]
\end{lemma}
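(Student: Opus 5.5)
The argument rests on the elementary inequality \(0\le e^x-1-x\le x^2e^{\abs{x}}\) for \(x\in\bbR\), followed by standard stochastic calculus for compensated Poisson integrals. We begin with the deterministic bounds on \(A_{\varepsilon,t}\). Apply the inequality pointwise with \(x=\varepsilon h(s,z)\) to get
\[
0\le e^{\varepsilon h}-1-\varepsilon h\le \varepsilon^2 h^2e^{\varepsilon\abs{h}}.
\]
Integrating against \(\nu(dz)\,ds\) gives \(0\le A_{\varepsilon,t}(h)\le\varepsilon^2I_{\varepsilon,t}(h)<\infty\) a.s.; the same holds for \(-h\), since \(I_{\varepsilon,t}(-h)=I_{\varepsilon,t}(h)\). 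For the sum, we use
\[
e^x+e^{-x}-2=2(\cosh x-1)\le x^2\cosh x\le x^2e^{\abs{x}}.
\]
This follows by comparing Taylor coefficients: \(2/(2k)!\le 1/(2k-2)!\). It yields \(A_{\varepsilon,t}(h)+A_{\varepsilon,t}(-h)\le\varepsilon^2I_{\varepsilon,t}(h)\).

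Next, the local martingale property of \(M(h)\). Since \(e^{\varepsilon\abs{h}}\ge1\), we have \(\int_0^t\!\int h^2\,\nu(dz)\,ds\le I_{\varepsilon,t}(h)<\infty\) a.s. The integral \(\int\!\int h^2\,\nu\,ds\) is continuous and adapted, so the stopping times \(\tau_m:=\inf\{t:\int_0^t\!\int h^2\,\nu\,ds\ge m\}\) localize it. By the \(L^2\)-isometry for compensated Poisson integrals (Applebaum, as used in Lemma~\ref{lem:diffusion-type-estimates}(i)), \(M(h)\) is therefore a locally square-integrable local martingale.

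For \(Z^\varepsilon(h)\), apply It\^o's formula for semimartingales with jumps to \(\exp(Y_t)\), where \(Y_t=\varepsilon M_t(h)-A_{\varepsilon,t}(h)\). There is no continuous martingale part, and \(A\) is continuous of finite variation. The compensator terms cancel exactly by the choice of \(A_{\varepsilon}\), leaving
\[
Z_t^\varepsilon(h)=1+\int_0^t\!\int Z^\varepsilon_{s-}(h)\bigl(e^{\varepsilon h(s,z)}-1\bigr)\,\widetilde N(ds,dz).
\]
This is the Dol\'eans-Dade exponential identity. It remains to check that the right-hand side is a local martingale. The integrand \(Z_{s-}(e^{\varepsilon h}-1)\) is predictable, and \(Z_{s-}\) is locally bounded, being c\`adl\`ag and adapted with left limits. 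By the mean value theorem, \(\abs{e^{\varepsilon h}-1}\le\varepsilon\abs{h}e^{\varepsilon\abs{h}}\). Hence
\[
\int\!\int (e^{\varepsilon h}-1)^2\,\nu\,ds\le\varepsilon^2\int\!\int h^2e^{2\varepsilon\abs{h}}\,\nu\,ds,
\]
which carries an exponent \(2\varepsilon\) rather than \(\varepsilon\). \textbf{This integrability step is the main obstacle.}

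To avoid needing more than \(I_{\varepsilon,t}(h)<\infty\), I would split \(e^{\varepsilon h}-1\) into two pieces. On \(\{\abs{\varepsilon h}\le1\}\) the square is bounded by \(e^2\varepsilon^2h^2\), which is \(\nu\,ds\)-integrable, so this part is a locally square-integrable integral. On \(\{\abs{\varepsilon h}>1\}\) the quantity \(\abs{e^{\varepsilon h}-1}\le\abs{\varepsilon h}^2e^{\varepsilon\abs{h}}\) is \(\nu\,ds\)-integrable, i.e.\ it lies in the \(L^1\) class. For such an integrand, the compensated integral \(\int\!\int g\,N-\int\!\int g\,\nu\,ds\) is a local martingale with locally integrable variation (Jacod--Shiryaev II.1.28). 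Both pieces are therefore local martingales, and so is \(Z^\varepsilon(h)\).

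The same argument with \(-h\) in place of \(h\) treats \(Z^\varepsilon(-h)\). Both processes are nonnegative, since they are exponentials. A nonnegative local martingale is a supermartingale by Fatou's lemma applied along a localizing sequence. Hence \(\bbE[Z^\varepsilon_t(\pm h)]\le Z_0=1\).
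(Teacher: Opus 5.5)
Your proof is correct, but at the key step it takes a different route from the paper. The bounds on \(A_{\varepsilon,t}(\pm h)\), the localization of \(M(h)\), and the step from nonnegative local martingale to supermartingale match the paper.

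The difference is in how you get the exponential local martingale property. The paper does not run It\^o's formula itself. It rewrites \(Y=\varepsilon M(h)-A_{\varepsilon}(h)\) in Applebaum's L\'evy--It\^o form, with jumps in \(\{\abs{z}<1\}\) compensated and jumps in \(\{\abs{z}\ge1\}\) uncompensated, their compensator moved into the finite-variation coefficient. It then checks \(\int_0^t\!\int_{\{\abs{z}\ge1\}}\abs{e^{\varepsilon h}-1}\,\nu(dz)\,ds<\infty\), using the same \(\{\varepsilon\abs{h}\le1\}/\{\varepsilon\abs{h}>1\}\) split you use, and notes that the drift-cancellation condition of Applebaum's Corollary~5.2.2 holds identically.

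You instead derive the Dol\'eans-Dade identity directly. You then check by hand that the compensated integral of \(Z_{s-}(e^{\varepsilon h}-1)\) is a local martingale, splitting by the size of \(\varepsilon h\) on the whole jump space rather than by the size of \(z\).

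The paper's version is shorter and rests on a textbook criterion. Yours is self-contained and shows explicitly why \(I_{\varepsilon,t}(h)<\infty\), with exponent \(\varepsilon\) rather than \(2\varepsilon\), is enough. The paper leaves that point, in the small-jump region, to the hypotheses of the cited corollary.

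The obstacle you flag can in fact be avoided if you do not merge the two integrals. Write \(Z^\varepsilon_t(h)-1=\int_0^t Z^\varepsilon_{s-}(h)\,\varepsilon\,dM_s(h)+\int_0^t\!\int Z^\varepsilon_{s-}(h)\bigl(e^{\varepsilon h}-1-\varepsilon h\bigr)\,\widetilde N(ds,dz)\). The first term is a stochastic integral of a locally bounded predictable process against a local martingale. The second is a compensated integral of a nonnegative predictable integrand whose \(\nu(dz)\,ds\)-integral is locally bounded, by \(0\le e^u-1-u\le u^2e^{\abs{u}}\) and local boundedness of \(Z_{-}\). Both are local martingales with no further splitting.
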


\begin{proof}
Since \(e^{\varepsilon\abs{h}}\ge1\), the assumption implies that, for
every \(t\ge0\),
\[
\int_0^t\int_{\bbR^d\setminus\{0\}}
\abs{h(s,z)}^2\,\nu(dz)\,ds<\infty
\]
almost surely. Hence \(M(h)\) is a well-defined compensated Poisson
integral and is a locally square-integrable local martingale.

For \(u\in\bbR\),
\[
0\le e^u-1-u\le u^2e^{\abs{u}},
\qquad
0\le e^u+e^{-u}-2\le u^2e^{\abs{u}}.
\]
Therefore,
\[
0\le A_{\varepsilon,t}(h)
\le
\varepsilon^2 I_{\varepsilon,t}(h)<\infty,
\]
and
\[
A_{\varepsilon,t}(h)+A_{\varepsilon,t}(-h)
=
\int_0^t\int_{\bbR^d\setminus\{0\}}
\bigl(e^{\varepsilon h(s,z)}
+e^{-\varepsilon h(s,z)}-2\bigr)\,\nu(dz)\,ds
\le
\varepsilon^2 I_{\varepsilon,t}(h).
\]

It remains to prove the exponential local martingale property. We connect
the present all-space compensated notation with Applebaum's ordinary
exponential criterion
\cite[Section~5.2, Corollary~5.2.2]{app09}. Put
\[
Y_t:=\varepsilon M_t(h)-A_{\varepsilon,t}(h).
\]
Splitting the jump region into small and large jumps gives
\begin{align*}
Y_t
&=
\int_0^t\int_{\{\abs{z}<1\}}
\varepsilon h(s,z)\,\widetilde N(ds,dz)
+
\int_0^t\int_{\{\abs{z}\ge1\}}
\varepsilon h(s,z)\,N(ds,dz) \\
&\quad
-\int_0^t\int_{\{\abs{z}<1\}}
\bigl(e^{\varepsilon h(s,z)}-1-\varepsilon h(s,z)\bigr)
\,\nu(dz)\,ds \\
&\quad
-\int_0^t\int_{\{\abs{z}\ge1\}}
\bigl(e^{\varepsilon h(s,z)}-1\bigr)
\,\nu(dz)\,ds .
\end{align*}
Here we used
\[
\int_{\{\abs{z}\ge1\}}\varepsilon h(s,z)\,\widetilde N(ds,dz)
=
\int_{\{\abs{z}\ge1\}}\varepsilon h(s,z)\,N(ds,dz)
-
\int_{\{\abs{z}\ge1\}}\varepsilon h(s,z)\,\nu(dz)\,ds.
\]

Thus \(Y\) is in Applebaum's small-jump/large-jump form with no Brownian
part,
\[
H(s,z)=\varepsilon h(s,z)
\quad\text{on }\{\abs{z}<1\},
\qquad
K(s,z)=\varepsilon h(s,z)
\quad\text{on }\{\abs{z}\ge1\},
\]
and finite-variation coefficient
\[
G(s)=
-\int_{\{\abs{z}<1\}}
\bigl(e^{\varepsilon h(s,z)}-1-\varepsilon h(s,z)\bigr)\nu(dz)
-\int_{\{\abs{z}\ge1\}}
\bigl(e^{\varepsilon h(s,z)}-1\bigr)\nu(dz).
\]
The small-jump \(L^2\) condition follows from the square-integrability of
\(h\). The large-jump integrability follows from
\(\nu(\{\abs{z}\ge1\})<\infty\), the square-integrability of \(h\), and
\(I_{\varepsilon,t}(h)<\infty\); indeed, splitting according to
\(\{\varepsilon\abs{h}\le1\}\) and \(\{\varepsilon\abs{h}>1\}\) gives
\[
\int_0^t\int_{\{\abs{z}\ge1\}}
\abs{e^{\varepsilon h(s,z)}-1}\,\nu(dz)\,ds<\infty.
\]

Moreover, the drift cancellation condition in
\cite[Corollary~5.2.2]{app09} is satisfied identically:
\begin{align*}
& G(s)
+\int_{\{\abs{z}<1\}}
\bigl(e^{H(s,z)}-1-H(s,z)\bigr)\nu(dz)
+\int_{\{\abs{z}\ge1\}}
\bigl(e^{K(s,z)}-1\bigr)\nu(dz) \\
&\qquad =0
\end{align*}
for Lebesgue-a.e. \(s\), almost surely. Hence
\(e^{Y_t}=Z_t^\varepsilon(h)\) is a local martingale. Applying the same
argument to \(-h\) gives that \(Z^\varepsilon(-h)\) is also a local
martingale. Since both processes are nonnegative, they are
supermartingales, and therefore
\[
\bbE[Z_t^\varepsilon(h)]\le Z_0^\varepsilon(h)=1,
\qquad
\bbE[Z_t^\varepsilon(-h)]\le Z_0^\varepsilon(-h)=1.
\]
\end{proof}

We first control the finite-net term, namely the first term on the
right-hand side of \eqref{eq:jump-net-osc-decomposition}.

\begin{lemma}\label{lem:jump-net-main-term}
For every \(\varepsilon\in(0,1)\), set
\[
\varepsilon_J:=
\min\left\{
\frac{\lambda\wedge1}{4FC_\Xi},
\frac{\varepsilon}{8C_\Xi^2\Lambda\!\left(\frac{\lambda\wedge1}{2}\right)}
\right\}.
\]
Then
\[
\bbE\Bigl[
\frac1n\sum_{k=0}^{n-1}(f_{j^*}-f_0)(X_{k\Delta})J_{k\Delta}^i
\Bigr]
\le
\frac{\log N_n}{\varepsilon_J n\Delta}
+
\frac{\varepsilon}{8}
\Bigl(
\EmpRisk_n(\widehat f_n,f_0)+4F\delta
\Bigr).
\]
\end{lemma}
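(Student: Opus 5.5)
Write \(g_j:=f_j-f_0\) for \(j=1,\dots,N_n\). Each \(g_j\) is non-random, supported in \([0,1]^d\), and satisfies \(\norminf{g_j}\le 2F\). The key observation is that
\[
\frac1n\sum_{k=0}^{n-1} g_j(X_{k\Delta})J^i_{k\Delta}
=\frac{1}{n\Delta}M_{n\Delta}(h_j),
\qquad
h_j(s,z):=\sum_{k=0}^{n-1} g_j(X_{k\Delta})\one_{(k\Delta,(k+1)\Delta]}(s)\,\Xi_i(X_{s-})z .
\]
The integrand \(h_j\) is predictable, and \(\abs{h_j(s,z)}\le 2FC_\Xi\abs{z}\).

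The plan is to apply Lemma~\ref{lem:jump-exponential-local-martingale} to \(h_j\) with exponent \(a:=\varepsilon_J\). Since \(a\abs{h_j}\le 2FC_\Xi\varepsilon_J\abs{z}\le \frac{\lambda\wedge1}{2}\abs{z}\), we get
\[
I_{a,n\Delta}(h_j)\le C_\Xi^2\Lambda\!\left(\tfrac{\lambda\wedge1}{2}\right)\int_0^{n\Delta}\sum_k g_j(X_{k\Delta})^2\one_{(k\Delta,(k+1)\Delta]}(s)\,ds
= C_\Xi^2\Lambda\!\left(\tfrac{\lambda\wedge1}{2}\right)\Delta\sum_{k}g_j(X_{k\Delta})^2 .
\]
Here \(\abs{\Xi_i z}^2\le C_\Xi^2\abs z^2\), and the bound is finite a.s. The lemma then gives \(\bbE[Z^a_{n\Delta}(h_j)]\le1\), with \(A_{a,n\Delta}(h_j)\le a^2I_{a,n\Delta}(h_j)\).

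Next I remove the data-dependent index \(j^*\) by a union bound inside an exponential. Consider
\[
\bbE\exp\Bigl(\max_j\bigl(aM_{n\Delta}(h_j)-A_{a,n\Delta}(h_j)\bigr)\Bigr)\le\sum_j\bbE Z^a_{n\Delta}(h_j)\le N_n .
\]
By Jensen's inequality this gives
\[
\bbE\bigl[aM_{n\Delta}(h_{j^*})-A_{a,n\Delta}(h_{j^*})\bigr]\le\log N_n .
\]
Dividing by \(an\Delta\) yields
\[
\bbE\Bigl[\frac1n\sum_k g_{j^*}(X_{k\Delta})J^i_{k\Delta}\Bigr]
\le\frac{\log N_n}{\varepsilon_J n\Delta}+\varepsilon_J C_\Xi^2\Lambda\!\left(\tfrac{\lambda\wedge1}{2}\right)\bbE\Bigl[\frac1n\sum_k g_{j^*}(X_{k\Delta})^2\Bigr].
\]
The second choice in the minimum defining \(\varepsilon_J\) makes the prefactor at most \(\varepsilon/8\).

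It remains to compare \(\frac1n\sum_k (f_{j^*}-f_0)^2(X_{k\Delta})\) with the empirical risk of \(\widehat f_n\). Using \(\norminf{\widehat f_n-f_{j^*}}\le\delta\) and the bound \(\abs{(f_{j^*}-f_0)^2-(\widehat f_n-f_0)^2}\le\delta\cdot 4F\) (from \(\abs{a^2-b^2}=\abs{a-b}\abs{a+b}\)), we get
\[
\bbE\frac1n\sum_k g_{j^*}(X_{k\Delta})^2\le\EmpRisk_n(\widehat f_n,f_0)+4F\delta,
\]
which is the claim.

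The main obstacle is the measurability and integrability bookkeeping. I need predictability of \(h_j\), which holds because the \(g_j\) are fixed functions of \(X_{k\Delta}\) on each interval. I also need the expectation of \(aM(h_{j^*})-A(h_{j^*})\) to be well defined, which follows since \(M(h_j)\) is square-integrable given \(\Lambda(0)<\infty\) and the bounded coefficients. The role of the first constraint in \(\varepsilon_J\) is to keep the exponent within the range where \(\Lambda\) is finite; this must be checked with the factor \(2F\) from \(\norminf{g_j}\le2F\).
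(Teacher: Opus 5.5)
Your proposal is correct and follows essentially the same route as the paper. Both arguments apply Lemma~\ref{lem:jump-exponential-local-martingale} to each net element with exponent \(\varepsilon_J\), bound the exponential at the random index \(j^*\) by the sum over the net and obtain \(\log N_n\) via Jensen, control the compensator by \(\varepsilon_J^2C_\Xi^2\Lambda((\lambda\wedge1)/2)\Delta\sum_k(f_{j^*}-f_0)^2(X_{k\Delta})\), and then pass to \(\EmpRisk_n(\widehat f_n,f_0)+4F\delta\). Your remark that each \(M_{n\Delta}(h_j)\) is square-integrable, so that the expectation at the random index is well defined, is a detail the paper leaves implicit.
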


\begin{proof}
Fix \(f\in\ClassF\) and define
\[
h_f(s,z):=
\sum_{k=0}^{n-1}
(f-f_0)(X_{k\Delta})\Xi_i(X_{s-})z\,
\one_{(k\Delta,(k+1)\Delta]}(s),
\qquad
0\le s\le n\Delta .
\]
Since \(\norminf{f-f_0}\le 2F\), we have
\[
\abs{\varepsilon_J h_f(s,z)}
\le
2\varepsilon_J FC_\Xi\abs{z}
\le
\frac{\lambda\wedge1}{2}\abs{z}.
\]
Consequently,
\begin{align*}
I_{\varepsilon_J,n\Delta}(h_f)
&=
\int_0^{n\Delta}\int_{\bbR^d\setminus\{0\}}
\abs{h_f(s,z)}^2e^{\varepsilon_J\abs{h_f(s,z)}}\,\nu(dz)\,ds \\
&\le
C_\Xi^2
\Lambda\!\left(\frac{\lambda\wedge1}{2}\right)
\Delta
\sum_{k=0}^{n-1}(f-f_0)^2(X_{k\Delta})
<\infty
\end{align*}
almost surely. Hence Lemma~\ref{lem:jump-exponential-local-martingale}
applies to \(h_f\).

In particular, for each net element \(f_j\),
\[
\bbE\left[
\exp\bigl(
\varepsilon_J M_{n\Delta}(h_{f_j})
-
A_{\varepsilon_J,n\Delta}(h_{f_j})
\bigr)
\right]
\le 1.
\]
Since \(j^*\in\{1,\dots,N_n\}\),
\[
\exp\bigl(
\varepsilon_J M_{n\Delta}(h_{f_{j^*}})
-
A_{\varepsilon_J,n\Delta}(h_{f_{j^*}})
\bigr)
\le
\sum_{j=1}^{N_n}
\exp\bigl(
\varepsilon_J M_{n\Delta}(h_{f_j})
-
A_{\varepsilon_J,n\Delta}(h_{f_j})
\bigr).
\]
Therefore
\[
\bbE\left[
\exp\bigl(
\varepsilon_J M_{n\Delta}(h_{f_{j^*}})
-
A_{\varepsilon_J,n\Delta}(h_{f_{j^*}})
\bigr)
\right]
\le N_n.
\]
By Jensen's inequality,
\[
\varepsilon_J\bbE[M_{n\Delta}(h_{f_{j^*}})]
-
\bbE[A_{\varepsilon_J,n\Delta}(h_{f_{j^*}})]
\le
\log N_n.
\]
Moreover, Lemma~\ref{lem:jump-exponential-local-martingale} and the
above bound on \(I_{\varepsilon_J,n\Delta}(h_f)\) give
\[
A_{\varepsilon_J,n\Delta}(h_f)
\le
\varepsilon_J^2 C_\Xi^2
\Lambda\!\left(\frac{\lambda\wedge1}{2}\right)
\Delta
\sum_{k=0}^{n-1}(f-f_0)^2(X_{k\Delta}).
\]
Applying this with \(f=f_{j^*}\) yields
\[
\bbE[A_{\varepsilon_J,n\Delta}(h_{f_{j^*}})]
\le
\varepsilon_J^2 C_\Xi^2
\Lambda\!\left(\frac{\lambda\wedge1}{2}\right)
n\Delta
\Bigl(
\EmpRisk_n(\widehat f_n,f_0)+4F\delta
\Bigr),
\]
because
\((f_{j^*}-f_0)^2 \le (\widehat f_n-f_0)^2+4F\delta\).

Finally, by the definition of \(h_f\) and \(J_{k\Delta}^i\),
\[
M_{n\Delta}(h_f)
=
\Delta
\sum_{k=0}^{n-1}
(f-f_0)(X_{k\Delta})J_{k\Delta}^i.
\]
Thus
\begin{align*}
&\bbE\Bigl[
\frac1n\sum_{k=0}^{n-1}
(f_{j^*}-f_0)(X_{k\Delta})J_{k\Delta}^i
\Bigr] \\
&\quad =
\frac{1}{n\Delta}\bbE[M_{n\Delta}(h_{f_{j^*}})] \\
&\quad \le
\frac{\log N_n}{\varepsilon_J n\Delta}
+
\varepsilon_J C_\Xi^2
\Lambda\!\left(\frac{\lambda\wedge1}{2}\right)
\Bigl(
\EmpRisk_n(\widehat f_n,f_0)+4F\delta
\Bigr).
\end{align*}
Since
\(\varepsilon_J C_\Xi^2 \Lambda\!\left((\lambda\wedge1)/2\right) \le \varepsilon/8\),
the desired bound follows.
\end{proof}

We next control the second term on the right-hand side of
\eqref{eq:jump-net-osc-decomposition}. This is the local oscillation term around
the net representative \(f_{j^*}\). To prepare for the subsequent chaining
argument, we first prove a fixed-pair tail estimate for deterministic
\(f,g\in\ClassF\), which yields the required \(\psi_1\)-increment bound.

\begin{lemma}\label{lem:jump-increment-tail}
For non-random
\(f,g\in\ClassF\) and
\((s,z)\in[0,n\Delta]\times(\bbR^d\setminus\{0\})\), define
\[
h_{f,g}(s,z):=
\frac1{\sqrt{n\Delta}}
\sum_{k=0}^{n-1}
(f-g)(X_{k\Delta})\Xi_i(X_{s-})z\,
\one_{(k\Delta,(k+1)\Delta]}(s).
\]
If \(\norminf{f-g}=0\), then \(h_{f,g}\equiv0\). Otherwise, set
\[
C_{\lambda,\Xi}:=\frac{\lambda\wedge1}{2C_\Xi},
\qquad
B_{\lambda,\Xi}:=
C_{\lambda,\Xi}^2C_\Xi^2
\Lambda\!\left(\frac{\lambda\wedge1}{2}\right),
\qquad
\varepsilon_{f,g}:=
\frac{C_{\lambda,\Xi}}{\norminf{f-g}}.
\]
Then
\[
0\le
\varepsilon_{f,g}^{-1}
A_{\varepsilon_{f,g},n\Delta}(h_{f,g})
\le
C_{\lambda,\Xi}C_\Xi^2
\Lambda\!\left(\frac{\lambda\wedge1}{2}\right)
\norminf{f-g},
\]
and, for every \(x>0\),
\[
\bbP\left(
M_{n\Delta}(h_{f,g})
-
\varepsilon_{f,g}^{-1}
A_{\varepsilon_{f,g},n\Delta}(h_{f,g})
\ge x
\right)
\le
\exp\!\left(
-\frac{C_{\lambda,\Xi}}{\norminf{f-g}}x
\right),
\]
\[
\bbP\left(
M_{n\Delta}(h_{f,g})
-
\varepsilon_{f,g}^{-1}
A_{\varepsilon_{f,g},n\Delta}(h_{f,g})
\le -x
\right)
\le
\exp(B_{\lambda,\Xi})
\exp\!\left(
-\frac{C_{\lambda,\Xi}}{\norminf{f-g}}x
\right).
\]
Consequently,
\[
\bbP\left(
\abs{
M_{n\Delta}(h_{f,g})
-
\varepsilon_{f,g}^{-1}
A_{\varepsilon_{f,g},n\Delta}(h_{f,g})
}
\ge x
\right)
\le
\bigl(1+\exp(B_{\lambda,\Xi})\bigr)
\exp\!\left(
-\frac{C_{\lambda,\Xi}}{\norminf{f-g}}x
\right).
\]
\end{lemma}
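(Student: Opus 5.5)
The plan is to apply Lemma~\ref{lem:jump-exponential-local-martingale} to \(h_{f,g}\) with \(\varepsilon=\varepsilon_{f,g}\). We then combine it with a Chernoff (Markov) bound and with the deterministic control of the compensator. All the randomness of the compensator enters only through \(\sum_k(f-g)^2(X_{k\Delta})\), and that sum is bounded by \(n\norminf{f-g}^2\).

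\emph{Step 1 (integrability and compensator bound).} The case \(\norminf{f-g}=0\) is trivial, so assume \(\norminf{f-g}>0\). Since \(\abs{(f-g)(X_{k\Delta})}\le\norminf{f-g}\), for \(s\in(k\Delta,(k+1)\Delta]\) we have
\[
\varepsilon_{f,g}\abs{h_{f,g}(s,z)}\le \frac{C_{\lambda,\Xi}}{\norminf{f-g}}\cdot\frac{\norminf{f-g}C_\Xi\abs{z}}{\sqrt{n\Delta}}\le\frac{\lambda\wedge1}{2}\abs{z},
\]
where we used \(n\Delta\ge2\ge1\). Hence
\[
I_{\varepsilon_{f,g},n\Delta}(h_{f,g})\le \frac{C_\Xi^2\Lambda\bigl(\tfrac{\lambda\wedge1}{2}\bigr)}{n\Delta}\,\Delta\sum_{k}(f-g)^2(X_{k\Delta})\le C_\Xi^2\Lambda\bigl(\tfrac{\lambda\wedge1}{2}\bigr)\norminf{f-g}^2,
\]
which is deterministic and finite. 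Lemma~\ref{lem:jump-exponential-local-martingale} then gives \(0\le A_{\varepsilon_{f,g},n\Delta}(h_{f,g})\le\varepsilon_{f,g}^2 I\). Dividing by \(\varepsilon_{f,g}\) yields \(\varepsilon_{f,g}^{-1}A\le\varepsilon_{f,g}C_\Xi^2\Lambda\norminf{f-g}^2=C_{\lambda,\Xi}C_\Xi^2\Lambda\norminf{f-g}\), which is the first claim. The same lemma gives the two supermartingale bounds \(\bbE[Z^{\varepsilon_{f,g}}_{n\Delta}(\pm h_{f,g})]\le1\).

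\emph{Step 2 (upper tail).} Write \(D:=M_{n\Delta}(h_{f,g})-\varepsilon_{f,g}^{-1}A_{\varepsilon_{f,g},n\Delta}(h_{f,g})\). Then \(\varepsilon_{f,g}D=\log Z^{\varepsilon_{f,g}}_{n\Delta}(h_{f,g})\), so by Markov's inequality
\[
\bbP(D\ge x)\le e^{-\varepsilon_{f,g}x}\,\bbE[Z^{\varepsilon_{f,g}}_{n\Delta}(h_{f,g})]\le e^{-C_{\lambda,\Xi}x/\norminf{f-g}}.
\]

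\emph{Step 3 (lower tail).} This is the only slightly delicate step, because \(D\) is centered with the compensator of \(h\), not of \(-h\). We have
\[
-\varepsilon_{f,g}D=\varepsilon_{f,g}M(-h)+A(h)=\log Z^{\varepsilon_{f,g}}(-h)+A(h)+A(-h),
\]
with the time index \(n\Delta\) suppressed. By the last inequality of Lemma~\ref{lem:jump-exponential-local-martingale} and Step 1,
\[
A(h)+A(-h)\le\varepsilon_{f,g}^2 I\le C_{\lambda,\Xi}^2C_\Xi^2\Lambda=B_{\lambda,\Xi},
\]
and this bound is deterministic. Markov's inequality applied to \(e^{-\varepsilon_{f,g}D}\) therefore gives \(\bbP(D\le-x)\le e^{B_{\lambda,\Xi}}e^{-\varepsilon_{f,g}x}\).

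\emph{Step 4.} Adding the two tail bounds yields the two-sided bound with constant \(1+e^{B_{\lambda,\Xi}}\).

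The main point to check is that the bound on \(I\) is deterministic and holds uniformly for all \(t\le n\Delta\). This is needed so that the hypothesis of Lemma~\ref{lem:jump-exponential-local-martingale} holds almost surely and \(B_{\lambda,\Xi}\) can be taken out of the expectation. It relies on extending \(h_{f,g}\) by zero for \(s>n\Delta\).
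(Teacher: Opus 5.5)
Your proposal is correct and matches the paper's proof step for step. The shared steps are the deterministic bound \(I_{\varepsilon_{f,g},n\Delta}(h_{f,g})\le C_\Xi^2\Lambda\bigl(\tfrac{\lambda\wedge1}{2}\bigr)\norminf{f-g}^2\), Markov's inequality applied to \(Z^{\varepsilon_{f,g}}_{n\Delta}(h_{f,g})\) for the upper tail, and, for the lower tail, rewriting \(-\varepsilon_{f,g}D\) via \(Z^{\varepsilon_{f,g}}_{n\Delta}(-h_{f,g})\) together with the symmetric compensator bound \(A(h)+A(-h)\le B_{\lambda,\Xi}\). The one point the paper states that you leave implicit is that \(h_{f,g}\) is predictable, which Lemma~\ref{lem:jump-exponential-local-martingale} needs; it holds because \(f,g\) are non-random and \((f-g)(X_{k\Delta})\one_{(k\Delta,(k+1)\Delta]}(s)\,\Xi_i(X_{s-})\) is left-continuous and adapted.
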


\begin{proof}
If \(\norminf{f-g}=0\), then \(h_{f,g}\equiv0\), and the claim is
trivial. Hence assume \(\norminf{f-g}>0\).

Since \(f,g\) are non-random, \(h_{f,g}\) is predictable. Moreover,
\[
\abs{h_{f,g}(s,z)}
\le
\frac{\norminf{f-g}C_\Xi}{\sqrt{n\Delta}}\abs{z},
\]
and hence, by Assumption~\ref{asm:sampling},
\[
\abs{\varepsilon_{f,g}h_{f,g}(s,z)}
\le
\frac{\lambda\wedge1}{2\sqrt{n\Delta}}\abs{z}
\le
\frac{\lambda\wedge1}{2}\abs{z}.
\]
Thus
\begin{align*}
I_{\varepsilon_{f,g},n\Delta}(h_{f,g})
&=
\int_0^{n\Delta}\int_{\bbR^d\setminus\{0\}}
\abs{h_{f,g}(s,z)}^2
e^{\varepsilon_{f,g}\abs{h_{f,g}(s,z)}}\,\nu(dz)\,ds \\
&\le
\frac{\norminf{f-g}^2C_\Xi^2}{n\Delta}
\int_0^{n\Delta}\int_{\bbR^d\setminus\{0\}}
\abs{z}^2
e^{\frac{\lambda\wedge1}{2}\abs{z}}\,\nu(dz)\,ds \\
&=
\norminf{f-g}^2C_\Xi^2
\Lambda\!\left(\frac{\lambda\wedge1}{2}\right)
<\infty .
\end{align*}
Hence Lemma~\ref{lem:jump-exponential-local-martingale} applies to
\(h_{f,g}\) with \(\varepsilon=\varepsilon_{f,g}\).

By the compensator bound in
Lemma~\ref{lem:jump-exponential-local-martingale},
\[
A_{\varepsilon_{f,g},n\Delta}(h_{f,g})
\le
\varepsilon_{f,g}^2
I_{\varepsilon_{f,g},n\Delta}(h_{f,g}).
\]
Therefore
\[
\varepsilon_{f,g}^{-1}
A_{\varepsilon_{f,g},n\Delta}(h_{f,g})
\le
\varepsilon_{f,g}
\norminf{f-g}^2C_\Xi^2
\Lambda\!\left(\frac{\lambda\wedge1}{2}\right)
=
C_{\lambda,\Xi}C_\Xi^2
\Lambda\!\left(\frac{\lambda\wedge1}{2}\right)
\norminf{f-g}.
\]

For the upper tail, the same lemma gives
\[
\bbE\left[
\exp\left(
\varepsilon_{f,g}M_{n\Delta}(h_{f,g})
-
A_{\varepsilon_{f,g},n\Delta}(h_{f,g})
\right)
\right]\le1.
\]
Hence Markov's inequality yields
\[
\bbP\left(
M_{n\Delta}(h_{f,g})
-
\varepsilon_{f,g}^{-1}
A_{\varepsilon_{f,g},n\Delta}(h_{f,g})
\ge x
\right)
\le
e^{-\varepsilon_{f,g}x}.
\]

For the lower tail, applying
Lemma~\ref{lem:jump-exponential-local-martingale} to \(-h_{f,g}\) gives
\[
\bbE\left[
\exp\left(
-\varepsilon_{f,g}M_{n\Delta}(h_{f,g})
-
A_{\varepsilon_{f,g},n\Delta}(-h_{f,g})
\right)
\right]\le1.
\]
Moreover, by the same lemma,
\[
A_{\varepsilon_{f,g},n\Delta}(h_{f,g})
+
A_{\varepsilon_{f,g},n\Delta}(-h_{f,g})
\le
\varepsilon_{f,g}^2
I_{\varepsilon_{f,g},n\Delta}(h_{f,g})
\le
B_{\lambda,\Xi}.
\]
Consequently,
\begin{align*}
&\bbE\left[
\exp\left(
-\varepsilon_{f,g}
\left\{
M_{n\Delta}(h_{f,g})
-
\varepsilon_{f,g}^{-1}
A_{\varepsilon_{f,g},n\Delta}(h_{f,g})
\right\}
\right)
\right] \\
&\quad
=
\bbE\left[
\exp\left(
-\varepsilon_{f,g}M_{n\Delta}(h_{f,g})
+
A_{\varepsilon_{f,g},n\Delta}(h_{f,g})
\right)
\right] \\
&\quad
\le
\exp(B_{\lambda,\Xi}).
\end{align*}
Another application of Markov's inequality gives
\[
\bbP\left(
M_{n\Delta}(h_{f,g})
-
\varepsilon_{f,g}^{-1}
A_{\varepsilon_{f,g},n\Delta}(h_{f,g})
\le -x
\right)
\le
\exp(B_{\lambda,\Xi})e^{-\varepsilon_{f,g}x}.
\]
Substituting
\(\varepsilon_{f,g} = C_{\lambda,\Xi}/\norminf{f-g}\)
gives the one-sided bounds, and the two-sided estimate follows from the
union bound.
\end{proof}

To pass from the fixed-pair increment bound above to a uniform local
estimate, we use a \(\psi_1\)-chaining inequality stated and proved in
\ref{app:psi1-chaining}; see
Lemma~\ref{lem:psi1-chaining}.
For \(x\ge0\), let \(\psi_1(x):=e^x-1\), and, for a real-valued random
variable \(Y\), define the associated Orlicz norm by
\[
    \norm{Y}_{\psi_1}
    :=
    \inf\left\{
        c>0:
        \bbE\left[
            \psi_1\left(\frac{\abs{Y}}{c}\right)
        \right]
        \le 1
    \right\},
\]
with the convention that the infimum over the empty set is \(+\infty\).

\begin{lemma}\label{lem:jump-chaining}
There exists a constant \(C_J>0\),
depending only on \(C_\Xi\), \(C_m\), and \(\lambda\), such that
\[
\bbE\Bigl[
\Bigl\lvert
\frac1n\sum_{k=0}^{n-1}(\widehat f_n-f_{j^*})(X_{k\Delta})J_{k\Delta}^i
\Bigr\rvert
\Bigr]
\le
\frac{C_J}{\sqrt{n\Delta}}
\int_0^\delta \log \NN(u,\ClassF,\norminf{\cdot})\,du.
\]
\end{lemma}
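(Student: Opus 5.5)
We will reduce the oscillation term to the supremum of a sub-exponential process indexed by the \(\delta\)-ball around each net point, and then apply the \(\psi_1\)-chaining inequality of Lemma~\ref{lem:psi1-chaining}. For non-random \(f\in\ClassF\), set
\[
T(f):=\frac{1}{\sqrt{n\Delta}}\sum_{k=0}^{n-1}f(X_{k\Delta})\,\Delta J_{k\Delta}^i
=M_{n\Delta}(h_{f,0}),
\]
so that, by linearity of the compensated Poisson integral, \(T(f)-T(g)=M_{n\Delta}(h_{f,g})\). With this notation,
\[
\frac1n\sum_{k=0}^{n-1}(\widehat f_n-f_{j^*})(X_{k\Delta})J_{k\Delta}^i=\frac{1}{\sqrt{n\Delta}}\bigl(T(\widehat f_n)-T(f_{j^*})\bigr).
\]
The left-hand side of the lemma is therefore bounded by
\[
(n\Delta)^{-1/2}\,\bbE\Bigl[\max_{j}\sup_{f\in\ClassF:\,\norminf{f-f_j}\le\delta}\abs{T(f)-T(f_j)}\Bigr].
\]
Pointwise measurability of \(\ClassF\) makes this supremum measurable, and we may pass to countable subclasses throughout.

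\emph{Step 1: the \(\psi_1\) increment bound.} The difficulty is that Lemma~\ref{lem:jump-increment-tail} controls \(M_{n\Delta}(h_{f,g})\) only after subtracting the compensator term \(\varepsilon_{f,g}^{-1}A_{\varepsilon_{f,g},n\Delta}(h_{f,g})\). That term is nonnegative and bounded deterministically by \(c_1\norminf{f-g}\), with \(c_1:=C_{\lambda,\Xi}C_\Xi^2\Lambda((\lambda\wedge1)/2)\). Writing \(D:=M_{n\Delta}(h_{f,g})-\varepsilon_{f,g}^{-1}A\), we get \(\abs{M_{n\Delta}(h_{f,g})}\le\abs{D}+c_1\norminf{f-g}\). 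The two-sided tail bound in Lemma~\ref{lem:jump-increment-tail}, combined with the standard conversion from tails to Orlicz norms, gives \(\norm{D}_{\psi_1}\le c_2\norminf{f-g}\), where \(c_2\) depends on \(C_{\lambda,\Xi}\) and \(B_{\lambda,\Xi}\). A constant \(a\) satisfies \(\norm{a}_{\psi_1}=\abs{a}/\log 2\). By the triangle inequality,
\[
\norm{T(f)-T(g)}_{\psi_1}\le C\norminf{f-g},
\]
with \(C\) depending only on \(C_\Xi,C_m,\lambda\). Here \(\Lambda\) is controlled through Remark~\ref{rem:Lambda-finite}, which yields dependence on \(C_m\), \(\lambda\), and the small-jump integral. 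I expect this to be the main subtle point. The increment has a \emph{bias-like} shift, and one must check that it is harmless: it is bounded by a constant multiple of the metric itself, not by a quantity that grows with \(n\Delta\). This is exactly where the normalization \(1/\sqrt{n\Delta}\) in \(h_{f,g}\) and Assumption~\ref{asm:sampling} (\(n\Delta\ge2\)) enter, since they guarantee \(\abs{\varepsilon_{f,g}h_{f,g}}\le\frac{\lambda\wedge1}{2}\abs z\).

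\emph{Step 2: chaining.} The process \((T(f))_{f\in\ClassF}\) has \(\psi_1\)-increments controlled by the sup-norm metric, so Lemma~\ref{lem:psi1-chaining} applies. It should give, for each fixed center \(f_j\),
\[
\bbE\sup_{\norminf{f-f_j}\le\delta}\abs{T(f)-T(f_j)}\lesssim\int_0^\delta\log\NN(u,\ClassF,\norminf{\cdot})\,du,
\]
which is the \(\psi_1\) analogue of Dudley's bound. It remains to handle the maximum over the \(N_n\) centers, since \(j^*\) is data dependent. The cleanest route is to avoid a separate union bound over \(j\): we chain over all of \(\ClassF\) using the oscillation form of the chaining inequality,
\[
\bbE\sup_{\norminf{f-g}\le\delta}\abs{T(f)-T(g)}\lesssim\int_0^\delta\log\NN(u)\,du+\delta\log\NN(\delta).
\]
The extra term is absorbed into the integral because \(\log\NN(u)\) is nonincreasing in \(u\), so \(\delta\log\NN(\delta)\le\int_0^\delta\log\NN(u)\,du\). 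If Lemma~\ref{lem:psi1-chaining} is stated only for suprema over a ball, the fallback is to apply it on each ball and combine the balls with a \(\psi_1\) maximal inequality. This costs a factor of \(\log N_n=\log\NN(\delta)\), which is again absorbed in the same way.

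\emph{Step 3: conclusion.} Substituting \(f=\widehat f_n\) and \(g=f_{j^*}\) (which satisfy \(\norminf{\widehat f_n-f_{j^*}}\le\delta\)) and multiplying by \((n\Delta)^{-1/2}\) gives the claimed bound with \(C_J\) depending only on \(C_\Xi,C_m,\lambda\).
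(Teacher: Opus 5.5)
Your proposal is correct and follows the paper's proof essentially step by step. You use the same \(\psi_1\)-increment bound, obtained from the tail estimate of Lemma~\ref{lem:jump-increment-tail} plus the deterministic \(O(\norminf{f-g})\) bound on the compensator term, and the same process \(T=\widetilde Z^J\); you then apply the oscillation form of Lemma~\ref{lem:psi1-chaining} over all pairs with \(\norminf{f-g}\le\delta\) and evaluate at \((\widehat f_n,f_{j^*})\).

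Two minor caveats. First, your fallback (chaining on each ball, then a \(\psi_1\) maximal inequality over the \(N_n\) centers) would give a \emph{multiplicative} factor \(\log(1+N_n)\) in front of \(\int_0^\delta\log\NN(u,\ClassF,\norminf{\cdot})\,du\); the monotonicity argument cannot absorb that, so it is fortunate the fallback is not needed. Second, separability is better justified as in the paper: use total boundedness of \((\ClassF,\norminf{\cdot})\) when the entropy integral is finite, together with the pathwise Lipschitz bound \(\abs{T(f)-T(g)}\le(n\Delta)^{-1/2}\norminf{f-g}\,\Delta\sum_{k}\abs{J_{k\Delta}^i}\). Pointwise measurability alone does not suffice, because its pointwise approximants need not respect the constraint \(\norminf{f-g}\le\delta\).
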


\begin{proof}
Fix non-random \(f,g\in\ClassF\). If \(\norminf{f-g}=0\), then
\(h_{f,g}\equiv0\), and hence
\(M_{n\Delta}(h_{f,g})=0\).
Thus the desired \(\psi_1\)-increment bound is trivial in this case.
Hence assume \(\norminf{f-g}>0\), and set
\(\varepsilon_{f,g}:= C_{\lambda,\Xi}/\norminf{f-g}\).
Define
\[
Y(f,g):=
M_{n\Delta}(h_{f,g})
-
\varepsilon_{f,g}^{-1}
A_{\varepsilon_{f,g},n\Delta}(h_{f,g}).
\]
Lemma~\ref{lem:jump-increment-tail} implies that, for every \(x>0\),
\[
\bbP\bigl(\abs{Y(f,g)}\ge x\bigr)
\le
\bigl(1+\exp(B_{\lambda,\Xi})\bigr)
\exp\!\left(
-\frac{C_{\lambda,\Xi}}{\norminf{f-g}}x
\right).
\]
Applying van der Vaart and Wellner
\cite[Lemma~2.2.1]{van-wel96} with
\(C=C_{\lambda,\Xi}/\norminf{f-g}\) and
\(K=1+\exp(B_{\lambda,\Xi})\), we obtain
\begin{equation}\label{eq:jump-Y-psi1}
\norm{Y(f,g)}_{\psi_1}
\le
\frac{2+\exp(B_{\lambda,\Xi})}{C_{\lambda,\Xi}}
\norminf{f-g}.
\end{equation}
Next, by Lemma~\ref{lem:jump-increment-tail},
\[
0\le
\varepsilon_{f,g}^{-1}
A_{\varepsilon_{f,g},n\Delta}(h_{f,g})
\le
C_{\lambda,\Xi}C_\Xi^2
\Lambda\!\left(\frac{\lambda\wedge1}{2}\right)
\norminf{f-g}.
\]
Since \(\psi_1(\log2)=1\), the definition of the Orlicz norm gives
\(\norm{U}_{\psi_1}\le B/\log2\) whenever \(\abs{U}\le B\) a.s.
Therefore,
\begin{equation}\label{eq:jump-A-psi1}
\norm{
\varepsilon_{f,g}^{-1}
A_{\varepsilon_{f,g},n\Delta}(h_{f,g})
}_{\psi_1}
\le
\frac{C_{\lambda,\Xi}C_\Xi^2}{\log2}
\Lambda\!\left(\frac{\lambda\wedge1}{2}\right)
\norminf{f-g}.
\end{equation}
Since
\[
M_{n\Delta}(h_{f,g})
=
Y(f,g)
+
\varepsilon_{f,g}^{-1}
A_{\varepsilon_{f,g},n\Delta}(h_{f,g}),
\]
the triangle inequality for the Orlicz norm, together with
\eqref{eq:jump-Y-psi1} and \eqref{eq:jump-A-psi1}, yields
\[
\norm{M_{n\Delta}(h_{f,g})}_{\psi_1}
\le
\left(
\frac{2+\exp(B_{\lambda,\Xi})}{C_{\lambda,\Xi}}
+
\frac{C_{\lambda,\Xi}C_\Xi^2}{\log2}
\Lambda\!\left(\frac{\lambda\wedge1}{2}\right)
\right)
\norminf{f-g}.
\]

Now define the process indexed by \(f\in\ClassF\),
\[
\widetilde Z^{J}(f)
:=
\frac1{\sqrt{n\Delta}}
\sum_{k=0}^{n-1}f(X_{k\Delta})
\int_{k\Delta}^{(k+1)\Delta}
\int_{\bbR^d\setminus\{0\}}
\Xi_i(X_{u-})z\,\widetilde N(du,dz).
\]
If
\[
    \int_0^\delta
    \log \NN(u,\ClassF,\norminf{\cdot})\,du
    =\infty,
\]
the desired bound is trivial. Otherwise
\((\ClassF,\norminf{\cdot})\) is separable, and the sample paths of
\(\widetilde Z^J\) are \(\norminf{\cdot}\)-continuous. Hence
\((\widetilde Z^J(f))_{f\in\ClassF}\) is separable.
Then
\[
\widetilde Z^{J}(f)-\widetilde Z^{J}(g)
=
M_{n\Delta}(h_{f,g}),
\]
and hence the previous estimate shows that the process
\((\widetilde Z^{J}(f))_{f\in\ClassF}\) has \(\psi_1\)-increments
controlled by \(\norminf{f-g}\). Applying Lemma~\ref{lem:psi1-chaining}
with the semimetric \(d(f,g):=\norminf{f-g}\) yields
\[
\bbE\Bigl[
\sup_{\norminf{f-g}\le\delta}
\abs{\widetilde Z^{J}(f)-\widetilde Z^{J}(g)}
\Bigr]
\le
C_J
\int_0^\delta \log \NN(u,\ClassF,\norminf{\cdot})\,du,
\]
for a constant \(C_J>0\) depending only on \(C_\Xi\), \(C_m\), and
\(\lambda\).

Finally, using the property of the chosen index \(j^*\) that
\(\norminf{\widehat f_n-f_{j^*}}\le\delta\), we have
\begin{align*}
&\bbE\Bigl[
\Bigl\lvert
\frac1n\sum_{k=0}^{n-1}
(\widehat f_n-f_{j^*})(X_{k\Delta})J_{k\Delta}^i
\Bigr\rvert
\Bigr] \\
&\quad =
\frac1{\sqrt{n\Delta}}
\bbE\Bigl[
\abs{
\widetilde Z^{J}(\widehat f_n)-\widetilde Z^{J}(f_{j^*})
}
\Bigr] \\
&\quad \le
\frac1{\sqrt{n\Delta}}
\bbE\Bigl[
\sup_{\norminf{f-g}\le\delta}
\abs{\widetilde Z^{J}(f)-\widetilde Z^{J}(g)}
\Bigr] \\
&\quad \le
\frac{C_J}{\sqrt{n\Delta}}
\int_0^\delta \log \NN(u,\ClassF,\norminf{\cdot})\,du.
\end{align*}
This proves the claim.
\end{proof}

\begin{proof}[Proof of Lemma~\ref{lem:emp-risk-bound}]
Fix a non-random \(\bar f\in\ClassF\). By Lemma~\ref{lem:emp-risk-decomposition} and the decomposition
\(\widehat f_n-f_0=(\widehat f_n-f_{j^*})+(f_{j^*}-f_0)\),
we obtain
\begin{align*}
\EmpRisk_n(\widehat f_n,f_0)
&\le
\overline\Psi_n(\widehat f_n,\bar f)
+\EmpRisk_n(\bar f,f_0)
+2\bbE\Bigl[\Bigl\lvert \frac1n\sum_{k=0}^{n-1}(\widehat f_n-\bar f)(X_{k\Delta})I_{k\Delta}^i\Bigr\rvert\Bigr] \\
&\hspace{-1.5em}
+2\bbE\Bigl[\Bigl\lvert \frac1n\sum_{k=0}^{n-1}(f_{j^*}-f_0)(X_{k\Delta})Z_{k\Delta}^i\Bigr\rvert\Bigr]
+2\bbE\Bigl[\Bigl\lvert \frac1n\sum_{k=0}^{n-1}(\widehat f_n-f_{j^*})(X_{k\Delta})Z_{k\Delta}^{i,\prime\prime}\Bigr\rvert\Bigr] \\
&\hspace{-1.5em}
+2\bbE\Bigl[\Bigl\lvert \frac1n\sum_{k=0}^{n-1}(\widehat f_n-f_{j^*})(X_{k\Delta})Z_{k\Delta}^{i,\prime}\Bigr\rvert\Bigr]
+2\bbE\Bigl[\frac1n\sum_{k=0}^{n-1}(f_{j^*}-f_0)(X_{k\Delta})J_{k\Delta}^i\Bigr] \\
&\hspace{-1.5em}
+2\bbE\Bigl[\Bigl\lvert \frac1n\sum_{k=0}^{n-1}(\widehat f_n-f_{j^*})(X_{k\Delta})J_{k\Delta}^i\Bigr\rvert\Bigr].
\end{align*}
By Lemma~\ref{lem:diffusion-type-estimates}(iv),
\[
2\bbE\Bigl[\Bigl\lvert \frac1n\sum_{k=0}^{n-1}(\widehat f_n-\bar f)(X_{k\Delta})I_{k\Delta}^i\Bigr\rvert\Bigr]
\le
\frac{\varepsilon}{2}\EmpRisk_n(\widehat f_n,f_0)
+\varepsilon \EmpRisk_n(\bar f,f_0)
+\frac{3C_b'^2C_{\Lip}}{2\varepsilon}\Delta.
\]
Also, by Lemma~\ref{lem:diffusion-type-estimates}(v) and the inequality \(2ab\le \frac{\varepsilon}{4}b^2+\frac{4}{\varepsilon}a^2\),
\begin{align*}
&2\bbE\Bigl[\Bigl\lvert \frac1n\sum_{k=0}^{n-1}(f_{j^*}-f_0)(X_{k\Delta})Z_{k\Delta}^i\Bigr\rvert\Bigr] \\
&\le
\frac{\varepsilon}{4}\EmpRisk_n(\widehat f_n,f_0)
+\varepsilon F\delta
+\frac{\varepsilon F^2 C_\Sigma'^2 C_{\Lip}}{2C_\Sigma^2}\Delta
+\frac{16C_\Sigma^2}{\varepsilon n\Delta}(2\log N_n+\log 2).
\end{align*}
Moreover, Lemma~\ref{lem:diffusion-type-estimates}(vi) and (vii) yield
\[
2\bbE\Bigl[\Bigl\lvert \frac1n\sum_{k=0}^{n-1}(\widehat f_n-f_{j^*})(X_{k\Delta})Z_{k\Delta}^{i,\prime\prime}\Bigr\rvert\Bigr]
\le
2\delta\sqrt{\frac12 C_\Sigma'^2 C_{\Lip}},
\]
and
\[
2\bbE\Bigl[\Bigl\lvert \frac1n\sum_{k=0}^{n-1}(\widehat f_n-f_{j^*})(X_{k\Delta})Z_{k\Delta}^{i,\prime}\Bigr\rvert\Bigr]
\le
\frac{2C_Z}{\sqrt{n\Delta}}
\int_0^\delta \sqrt{\log \NN(u,\ClassF,\norminf{\cdot})}\,du.
\]
For the jump terms, Lemmas~\ref{lem:jump-net-main-term} and~\ref{lem:jump-chaining} imply
\[
2\bbE\Bigl[\frac1n\sum_{k=0}^{n-1}(f_{j^*}-f_0)(X_{k\Delta})J_{k\Delta}^i\Bigr]
\le
\frac{\varepsilon}{4}\EmpRisk_n(\widehat f_n,f_0)
+\varepsilon F\delta
+\frac{2\log N_n}{\varepsilon_J n\Delta},
\]
where \(\varepsilon_J\) is the constant from Lemma~\ref{lem:jump-net-main-term}, and
\[
2\bbE\Bigl[\Bigl\lvert \frac1n\sum_{k=0}^{n-1}(\widehat f_n-f_{j^*})(X_{k\Delta})J_{k\Delta}^i\Bigr\rvert\Bigr]
\le
\frac{2C_J}{\sqrt{n\Delta}}
\int_0^\delta \log \NN(u,\ClassF,\norminf{\cdot})\,du.
\]
Moreover, since \(\sqrt{x}\le x+1\) for all \(x\ge0\) and
\(n\Delta\ge2\), we have
\[
\begin{aligned}
\frac{2C_Z}{\sqrt{n\Delta}}
\int_0^\delta
\sqrt{\log \NN(u,\ClassF,\norminf{\cdot})}\,du
&\le
\frac{2C_Z}{\sqrt{n\Delta}}
\int_0^\delta
\log \NN(u,\ClassF,\norminf{\cdot})\,du
+
\frac{2C_Z\delta}{\sqrt{n\Delta}}  \\
&\le
\frac{2C_Z}{\sqrt{n\Delta}}
\int_0^\delta
\log \NN(u,\ClassF,\norminf{\cdot})\,du
+
2C_Z\delta .
\end{aligned}
\]
Consequently, the Brownian chaining term and the jump chaining term are
bounded together by
\[
2C_Z\delta
+
\frac{2(C_Z+C_J)}{\sqrt{n\Delta}}
\int_0^\delta
\log \NN(u,\ClassF,\norminf{\cdot})\,du .
\]
Therefore,
\begin{align*}
\EmpRisk_n(\widehat f_n,f_0)
&\le
\overline\Psi_n(\widehat f_n,\bar f)
+(1+\varepsilon)\EmpRisk_n(\bar f,f_0)
+\varepsilon\EmpRisk_n(\widehat f_n,f_0) \\
&\quad
+\left(
\frac{3C_b'^2C_{\Lip}}{2\varepsilon}
+\frac{\varepsilon F^2 C_\Sigma'^2 C_{\Lip}}{2C_\Sigma^2}
\right)\Delta
+2\varepsilon F\delta
+2\delta\sqrt{\frac12 C_\Sigma'^2 C_{\Lip}} \\
&\quad
+2C_Z\delta
+\frac{16C_\Sigma^2}{\varepsilon n\Delta}(2\log N_n+\log 2)
+\frac{2\log N_n}{\varepsilon_J n\Delta} \\
&\quad
+\frac{2(C_Z+C_J)}{\sqrt{n\Delta}}
\int_0^\delta
\log \NN(u,\ClassF,\norminf{\cdot})\,du .
\end{align*}
Since \(F\ge 1\), there exists a constant \(C_\varepsilon>0\), depending only on
\((C_b,\allowbreak C_b',\allowbreak C_\Sigma,\allowbreak C_\Sigma',
\allowbreak C_\Xi,\allowbreak C_\Xi',\allowbreak C_\nu,\allowbreak C_m,
\allowbreak\lambda,\allowbreak\varepsilon)\),
such that
\[
\EmpRisk_n(\widehat f_n,f_0)
\le
\overline\Psi_n(\widehat f_n,\bar f)
+(1+\varepsilon)\EmpRisk_n(\bar f,f_0)
+\varepsilon\EmpRisk_n(\widehat f_n,f_0)
+C_\varepsilon F^2\,\gamma^{(L)}_{\delta,n}.
\]
Hence
\[
(1-\varepsilon)\EmpRisk_n(\widehat f_n,f_0)
\le
\overline\Psi_n(\widehat f_n,\bar f)
+(1+\varepsilon)\EmpRisk_n(\bar f,f_0)
+C_\varepsilon F^2\,\gamma^{(L)}_{\delta,n}.
\]
Since
\[
\overline\Psi_n(\widehat f_n,\bar f)
=
\bbE\bigl[
\ClassQ_n(\widehat f_n)-\ClassQ_n(\bar f)
\bigr]
\le
\bbE\biggl[
\ClassQ_n(\widehat f_n)
-\inf_{f\in\ClassF}\ClassQ_n(f)
\biggr]
=
\Psi_n^{\ClassF}(\widehat f_n),
\]
and \(\EmpRisk_n(\bar f,f_0)=\Risk_n(\bar f,f_0)\) for every non-random
\(\bar f\in\ClassF\),
we conclude, after enlarging \(C_\varepsilon\) if necessary, that
\[
\EmpRisk_n(\widehat f_n,f_0)
\le
\frac{1}{1-\varepsilon}\Psi_n^{\ClassF}(\widehat f_n)
+\frac{1+\varepsilon}{1-\varepsilon}\Risk_n(\bar f,f_0)
+C_\varepsilon F^2\,\gamma^{(L)}_{\delta,n}.
\]
Taking the infimum over \(\bar f\in\ClassF\) completes the proof.
\end{proof}

\subsection{Proof of Theorem~\ref{thm:oracle}}

\begin{proof}[Proof of Theorem~\ref{thm:oracle}]
Set
\[
\ClassF:=\ClassF(L,\mathbf{p},s,F),
\qquad
\delta:=(n\Delta)^{-1},
\qquad
a_n:=\frac{\log(n\Delta)}{C_\beta\Delta},
\qquad
\mu_n:=\left\lfloor \frac{n}{2a_n}\right\rfloor,
\]
and
\[
N_n:=\NN(\delta,\ClassF,\norminf{\cdot})\vee \lceil e^{7.4}\rceil .
\]
We may assume \(\mu_n>0\). Indeed, if \(\mu_n=0\), then
\(n<2a_n\), and hence
\(\log(n\Delta)/(n\Delta)>C_\beta/2\).
Since both \(\widehat f_n\) and \(f_0\) are bounded by \(F\), we have
\[
\Risk_n(\widehat f_n,f_0)
\le 4F^2
\le
\frac{8F^2}{C_\beta}
\frac{\log(n\Delta)}{n\Delta}.
\]
This term is absorbed into the right-hand side of the desired bound
after enlarging the constant. Thus, in the sequel, we suppose that
\(\mu_n>0\).

Fix \(\tau>1\). Choose \(\varepsilon\in(0,1)\) so small that
\((1+\varepsilon)^2/(1-\varepsilon)\le \tau\).
Applying Lemmas~\ref{lem:risk-to-emp-risk} and
\ref{lem:emp-risk-bound} with this \(\varepsilon\), and using
\(\Delta\le1\) and \(F\ge1\), we obtain
\begin{align*}
\Risk_n(\widehat f_n,f_0)
&\le
\tau\left(
\Psi_n(\widehat f_n)
+
\inf_{f\in\ClassF}\Risk_n(f,f_0)
\right)  \\
&\quad
+
c_1F^2
\left(
\frac{\log N_n}{\mu_n}
+
\beta_X(a_n\Delta)
+
\frac{1}{n}
+
\delta
+
\Delta
+
\Gamma_n^{(J)}
+
\frac{\log N_n}{n\Delta}
\right),
\end{align*}
where
\[
\Gamma_n^{(J)}
:=
\frac{1}{\sqrt{n\Delta}}
\int_0^\delta \log \NN(u,\ClassF,\norminf{\cdot})\,du,
\]
and \(c_1>0\) depends only on
\((d,\allowbreak C_b,\allowbreak C_b',\allowbreak C_\Sigma,\allowbreak C_\Sigma',
\allowbreak C_\Xi,\allowbreak C_\Xi',\allowbreak C_\nu,\allowbreak C_m,
\allowbreak\lambda,\allowbreak C_\beta,\allowbreak C_\beta',\allowbreak\tau)\).

Oga and Koike~\cite[Lemma~4.13]{oga-koi24} show that, for \(0<u\le\delta\),
\[
\log \NN(u,\ClassF,\norminf{\cdot})
\le
c_2 s\bigl(L\log s+\log d+\log u^{-1}\bigr),
\]
where \(c_2>0\) is a universal constant. Since
\(u\le \delta=(n\Delta)^{-1}\le 1/2\) by
Assumption~\ref{asm:sampling}, we have
\(\log d \le (\log d/\log 2)\log u^{-1}\),
and hence
\[
\log \NN(u,\ClassF,\norminf{\cdot})
\le
c_3 s\bigl(L\log s+\log u^{-1}\bigr),
\qquad 0<u\le \delta,
\]
for a constant \(c_3>0\) depending only on \(d\).

Therefore,
\begin{align*}
\Gamma_n^{(J)}
&\le
\frac{c_3}{\sqrt{n\Delta}}
\int_0^{(n\Delta)^{-1}}
s(L\log s+\log u^{-1})\,du \\
&=
\frac{c_3s}{\sqrt{n\Delta}}
\int_{n\Delta}^{\infty}
\frac{L\log s+\log r}{r^2}\,dr \\
&\le
c_4
\frac{s(L\log s+\log(n\Delta)+1)}{(n\Delta)^{3/2}} \\
&\le
c_5
\frac{s(L\log s+\log(n\Delta))\log(n\Delta)}{n\Delta},
\end{align*}
where the last inequality follows from Assumption~\ref{asm:sampling},
since \(n\Delta\ge2\).

Moreover, Oga and Koike~\cite[Lemma~4.13]{oga-koi24} and
Assumption~\ref{asm:sampling} imply
\[
\log N_n
\le
c_6 s\bigl(L\log s+\log(n\Delta)\bigr).
\]
Therefore,
\[
\frac{\log N_n}{\mu_n}+\frac{\log N_n}{n\Delta}
\le
c_7
\frac{s(L\log s+\log(n\Delta))\log(n\Delta)}{n\Delta}.
\]
Also, Assumption~\ref{asm:mixing} yields
\[
\beta_X(a_n\Delta)
\le
C_\beta' e^{-C_\beta a_n\Delta}
=
C_\beta'(n\Delta)^{-1}.
\]
Since \(\delta=(n\Delta)^{-1}\), \(n\Delta\ge2\), and
\(s\ge2\), all the remaining lower-order terms are absorbed into
\[
\frac{s(L\log s+\log(n\Delta))\log(n\Delta)}{n\Delta}
+\Delta.
\]
Finally, note that
\(\Risk_n(f,f_0)\le \norminf{f-f_0}^2\) for every \(f\in\ClassF\).
Combining the estimates above completes the proof.
\end{proof}

\section{Proof of Theorem~\ref{thm:composition}}
\label{sec:proof-composition}

The next approximation result essentially follows from the argument used for Schmidt-Hieber~\cite[Theorem~1]{sch-hie20}; see also Oga and Koike~\cite[Appendix~A.2]{oga-koi24} for the corresponding argument in the present compositional setup.

\begin{lemma}\label{lem:composition-approx}
For any \(f\in\ClassG(q,\mathbf{d},\mathbf{t},\bm{\beta},K)\), there exist a function \(\widetilde f_n^*\in\ClassF(L,\mathbf{p},s,F)\) and a constant \(C>0\), depending only on
\((c_L^u,\allowbreak c_p,\allowbreak c_s^{\ell},\allowbreak c_s^u,
\allowbreak q,\allowbreak\mathbf{d},\allowbreak\mathbf{t},\allowbreak\bm{\beta},\allowbreak F)\),
such that
\[
\norminf{\widetilde f_n^*-f}^2 \le C\phi_n.
\]
\end{lemma}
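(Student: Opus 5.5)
We will follow the construction of Schmidt-Hieber~\cite[Theorem~1 and Lemmas~3--5]{sch-hie20}, in the form adapted in Oga and Koike~\cite[Appendix~A.2]{oga-koi24}. The approach is to build a network for each component map \(g_{ij}\) and compose them, checking that the resulting architecture fits inside \(\ClassF(L,\mathbf p,s,F)\) under conditions (i)--(iv) of Theorem~\ref{thm:composition}.

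\textbf{Step 1: rescaling.} Write \(f=g_q\circ\cdots\circ g_0\) and replace each \(g_i\) by its affinely rescaled version \(h_i\). These maps send \([0,1]^{d_i}\) into \([0,1]^{d_{i+1}}\). Because \(\abs{\ell_i}\vee\abs{u_i}\le K\), each \(h_{ij}\) lies in \(\mathcal C^{\beta_i}_{t_i}([0,1]^{t_i},Q_i)\) after dropping inactive coordinates, with \(Q_i\) depending only on \(K\), \(\bm\beta\) and \(q\). Since each \(h_{ij}\) depends on only \(t_i\) coordinates, it can be viewed as a function on \([0,1]^{t_i}\).

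\textbf{Step 2: approximate each component.} For each \(i\), apply Schmidt-Hieber's Theorem~5 with \(m\asymp\log_2(n\Delta)\) and \(N_i\asymp n\Delta\phi_n\). This produces a network \(\widetilde h_{ij}\) of depth \(\lesssim \frac{\beta_i+t_i}{t_i}\log_2(4t_i\vee4\beta_i)\log_2(n\Delta\phi_n)\), width \(\lesssim N_i\), and sparsity \(\lesssim N_i\log(n\Delta)\). Its error is
\[
\norminf{\widetilde h_{ij}-h_{ij}}\lesssim N_i^{-\beta_i/t_i}+N_i2^{-m}.
\]
We choose \(N_i\) so that \(N_i^{-\beta_i/t_i}\asymp (n\Delta)^{-\beta_i^*/(2\beta_i^*+t_i)}\cdot(\text{adjustment})\). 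The adjustment reflects that errors at layer \(i\) are propagated through the later Hölder maps. Following Schmidt-Hieber, we take \(N=\lceil c\,n\Delta\phi_n\rceil\) uniformly. We then append a two-layer clipping \((x\mapsto 1\wedge x_+)\) so that outputs stay in \([0,1]\), which keeps the subsequent inputs in the right domain.

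\textbf{Step 3: compose and propagate errors.} Use Schmidt-Hieber's Lemma~3: for Hölder maps,
\[
\norminf{h_q\circ\cdots\circ h_0-\widetilde h_q\circ\cdots\circ\widetilde h_0}\le C\sum_{i=0}^q\max_j\norminf{h_{ij}-\widetilde h_{ij}}^{\prod_{\ell>i}(\beta_\ell\wedge1)}.
\]
With the above choice, each summand is \(\lesssim (n\Delta)^{-\beta_i^*/(2\beta_i^*+t_i)}\le\phi_n^{1/2}\). Squaring gives \(\norminf{\widetilde f-f}^2\le C\phi_n\), since the \(2^{-m}\) terms are of order \((n\Delta)^{-1}\) and hence negligible.

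\textbf{Step 4: fit into the class.} Place the component networks in parallel and then in series; the total depth is at most \(\sum_i(\text{depth}_i)+(\text{clipping})\). The constant \(11q+8\) and \(c_L\log_2(n\Delta\phi_n)\) in (ii) are exactly what is needed to accommodate this depth. We can enlarge depth via identity layers and enlarge width (iii) freely. The sparsity of order \(n\Delta\phi_n\log(n\Delta)\) is within (iv) after choosing the constant \(c\) in \(N\) small relative to \(c_s^u\), and the sparsity increase from padding with identity layers, of order \(L\le c_L^u n\Delta\phi_n\), is absorbed. Weights remain bounded by \(1\) as in Schmidt-Hieber's construction. Finally, clip the output to \([-F,F]\); since \(\norminf{f}\le K\le F\), this does not increase the error. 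Restrict to \([0,1]^d\) by multiplying with \(\one_{[0,1]^d}\), as in the class definition.

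\textbf{Main obstacle.} The main obstacle is bookkeeping: the sparsity budget must accommodate the identity-padding from the possibly larger \(L\) while staying below \(c_s^u n\Delta\phi_n\log(n\Delta)\), with the lower bound \(c_s^\ell\) guaranteeing enough parameters. We handle this by choosing \(N\) with a constant depending on \(c_s^\ell,c_s^u,c_L^u\). Consequently the final \(C\) depends only on the listed quantities.
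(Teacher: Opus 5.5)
Your route is the one the paper intends. The paper gives no independent argument: it defers to the construction behind Schmidt-Hieber's Theorem~1 as adapted in Oga and Koike's Appendix~A.2, i.e.\ rescaling, his Theorem~5 for each component, clipping into \([0,1]\), his Lemma~3, then depth/width/sparsity synchronization.

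\textbf{The step that fails as written.} This is the choice of \(m\) and your reason for discarding the \(N2^{-m}\) term. What enters Lemma~3 is \(N2^{-m}\), not \(2^{-m}\). With \(N\asymp n\Delta\phi_n\) and \(2^{-m}\asymp(n\Delta)^{-1}\), this term is of order \(\phi_n\). Lemma~3 then raises it to the power \(B_i:=\prod_{\ell=i+1}^{q}(\beta_\ell\wedge1)\), giving \(\phi_n^{B_i}\), which is not \(O(\phi_n^{1/2})\) once \(B_i<1/2\).
\begin{itemize}
\item Error example: take \(q=1\), \(t_0=t_1=1\), \(\beta_0=2\), \(\beta_1=2/5\). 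Then \(\phi_n=(n\Delta)^{-4/9}\), and the \(i=0\) contribution would be \((n\Delta)^{-8/45}\gg(n\Delta)^{-10/45}=\phi_n^{1/2}\).
\item Depth example: \(m=\lceil\log_2(n\Delta)\rceil\) can also break the depth budget (ii). For \(q=0\), \(t_0=1\), \(\beta_0=2\), Theorem~5 then needs depth \(2\log_2(n\Delta)+O(1)\). Condition (ii) only guarantees \(L\ge 8+\tfrac{9}{5}\log_2(n\Delta)\).
\end{itemize}
The consistent choice is \(m_i:=\lceil\tfrac{\beta_i+t_i}{t_i}\log_2 N\rceil\). Then \(N2^{-m_i}\le N^{-\beta_i/t_i}\), so the two error terms are of the same order; neither is negligible. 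Each summand is then \(\lesssim(n\Delta)^{-\beta_i^*/(2\beta_i^*+t_i)}\le\phi_n^{1/2}\), and the \(i\)-th block has depth \(\tfrac{\beta_i+t_i}{t_i}\log_2(4t_i\vee4\beta_i)\log_2N+O(1)\). This is exactly why \(c_L\) carries the factor \((\beta_i+t_i)/t_i\) and why (ii) is stated with \(\log_2(n\Delta\phi_n)\). The depth you state in Step~2 already presupposes this \(m_i\), so you only need to make Steps~2 and~3 consistent.

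\textbf{Further bookkeeping points.}
\begin{itemize}
\item The constant \(c\) in \(N=\lceil c\,n\Delta\phi_n\rceil\) must be small relative to \(c_s^\ell\) and \(c_p\), not \(c_s^u\). The network needs at most \(s\) nonzero parameters and width at most \(\min_i p_i\), and only the lower bounds in (iii)--(iv) are at your disposal. A small \(c\) also absorbs the additive \(O(1)\) depth of Theorem~5 and of the clipping layers through the slack \(c_L\log_2(1/c)\).
\item Padding from the construction depth \(L_0\) to \(L\) costs \(d(L-L_0)\le d\,c_L^u n\Delta\phi_n\) parameters. This fits under \(c_s^\ell n\Delta\phi_n\log(n\Delta)\) only once \(\log(n\Delta)\gtrsim d\,c_L^u/c_s^\ell\). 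For smaller \(n\Delta\), or when \(N\) is below the threshold required in Theorem~5, use the zero network together with \(\norminf{f}^2\le K^2\le F^2\) and \(\phi_n\ge(n\Delta)^{-1}\).
\item In Step~1, rescaling through \([-K,K]\) requires extending \(g_{ij}\) from \([\ell_i,u_i]^{d_i}\) to \([-K,K]^{d_i}\) with a H\"older norm controlled by \(K\), e.g.\ by Whitney extension. The reason is that Lemma~3 evaluates \(h_{i+1}\) at outputs of \(\widetilde h_i\) that need not lie in the rescaled image of \([\ell_{i+1},u_{i+1}]^{d_{i+1}}\). The paper leaves this implicit too, but your claim \(h_{ij}\in\mathcal C^{\beta_i}_{t_i}([0,1]^{t_i},Q_i)\) depends on it.
\end{itemize}
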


\begin{proof}[Proof of Theorem~\ref{thm:composition}]
Assumption~\ref{asm:sampling} together with condition~(iv) yields
\[
\frac{s(L\log s+\log(n\Delta))\log(n\Delta)}{n\Delta}
\le
c\,\phi_n L\log^3(n\Delta),
\]
where \(c>0\) depends only on \((c_s^u,q,\mathbf{t},\bm{\beta})\). Applying Theorem~\ref{thm:oracle} and Lemma~\ref{lem:composition-approx} with \(f=f_0\), we obtain the claimed bound.
\end{proof}

\section{Proof of Lemma~\ref{lem:uniform-mixing-gaussian-cp}}
\label{sec:proof-mixing}

We follow the proof strategy of Oga and Koike~\cite[Lemma~3.1]{oga-koi24}.
The only additional point is the treatment of the compound-Poisson part in
the Foster--Lyapunov drift estimate.

Let \((P_b^t)_{t\ge0}\) denote the transition semigroup of
\eqref{eq:gaussian-cp-submodel}, and let \(\mathcal A_b^J\) denote its infinitesimal generator. We first verify a uniform Foster--Lyapunov
drift condition. Put \(\rho(x)=(1+\abs{x}^2)^{1/2}\), \(V(x)=\exp(\theta\rho(x))\), where \(\theta\in(0,r)\) will be chosen sufficiently small. Specifically,
we show that there exist constants \(a,C_0>0\), independent of
\(b\in\ClassB_0^J(d,r,K)\), such that
\[
    \mathcal A_b^J V(x)\le -aV(x)+C_0,
    \qquad x\in\bbR^d.
\]
Since \(\rho(x)\le1+\abs{x}\) and \(\theta\le r\),
\begin{equation}\label{eq:levy-initial-V-moment}
    \bbE[V(\eta)]
    \le
    e^\theta\bbE[e^{r\abs{\eta}}]
    <\infty .
\end{equation}
Set \(K_d:=\sqrt d\,K\). Then
\[
    \sup_{b\in\ClassB_0^J(d,r,K)}
    \sup_{x\in\bbR^d}\abs{b(x)}
    \le K_d .
\]

For \(f\in C^2(\bbR^d)\) for which the integral below is finite, the generator \(\mathcal A_b^J\) is given by
\[
    \mathcal A_b^J f(x)
    =
    \langle b(x),\nabla f(x)\rangle
    +\frac12\Delta f(x)
    +
    \iota\int_{\bbR^d}
    \{f(x+z)-f(x)\}\varphi_d(z)\,dz .
\]
The diffusion part is handled in the same spirit as in
Oga and Koike~\cite[Lemma~3.1]{oga-koi24}, although we use the smooth
Lyapunov function \(V(x)=\exp(\theta\rho(x))\). Throughout this paragraph,
\(C\) denotes a positive constant independent of \(b\), \(x\), and
\(\theta\in(0,r\wedge1]\), whose value may change from line to line. Direct differentiation gives
\begin{equation}\label{eq:lyapunov-derivative-bounds}
    \nabla V(x)=\theta V(x)\frac{x}{\rho(x)},
    \qquad
    \norm{\nabla^2V(x)}
    \le
    C\left(\theta^2+\frac{\theta}{\rho(x)}\right)V(x).
\end{equation}
Therefore, using
\(\langle b(x),x\rangle\le -r\abs{x}\) for \(\abs{x}>2d\), we obtain
\begin{equation}\label{eq:diffusion-lyapunov-estimate}
\begin{aligned}
    \langle b(x),\nabla V(x)\rangle+\frac12\Delta V(x)
    &\le
    -\theta r\frac{\abs{x}}{\rho(x)}V(x)
    +C\left(\theta^2+\frac{\theta}{\rho(x)}\right)V(x)  \\
    &\le
    -\frac34\theta rV(x)
    +C\left(\theta^2+\frac{\theta}{\rho(x)}\right)V(x)
\end{aligned}
\end{equation}
for all sufficiently large \(\abs{x}\), uniformly over
\(b\in\ClassB_0^J(d,r,K)\).

It remains to estimate the jump part. Since the Gaussian jump distribution is
centered,
\[
    \int_{\bbR^d}z\,\varphi_d(z)\,dz=0,
\]
we may subtract the linear term:
\[
\begin{aligned}
&\int_{\bbR^d}
\{V(x+z)-V(x)\}\varphi_d(z)\,dz     \\
&\quad =
\int_{\bbR^d}
\{V(x+z)-V(x)-\langle\nabla V(x),z\rangle\}
\varphi_d(z)\,dz .
\end{aligned}
\]
By Taylor's formula, for \(\abs{z}\le \rho(x)/2\),
\[
\begin{aligned}
&\abs{V(x+z)-V(x)-\langle \nabla V(x),z\rangle} \\
&\qquad\le
\frac{\abs{z}^2}{2}
\sup_{0\le u\le1}\norm{\nabla^2V(x+uz)}
\le
C\left(\theta^2+\frac{\theta}{\rho(x)}\right)
V(x)\abs{z}^2e^{\theta\abs{z}}.
\end{aligned}
\]
where we used \eqref{eq:lyapunov-derivative-bounds} and
\(\rho(x+uz)\ge \rho(x)/2\).
Since
\(\int_{\bbR^d}\abs{z}^2e^{\theta\abs{z}}\varphi_d(z)\,dz<\infty\),
the contribution from \(\{\abs{z}\le\rho(x)/2\}\) is therefore bounded by
\[
    C\left(\theta^2+\frac{\theta}{\rho(x)}\right)V(x).
\]
On the complement, using again
\(V(x+z)\le V(x)e^{\theta\abs{z}}\) and
\(\abs{\nabla V(x)}\le \theta V(x)\), the contribution is bounded by
\[
    V(x)
    \int_{\{\abs{z}>\rho(x)/2\}}
    \{e^{\theta\abs{z}}+1+\theta\abs{z}\}\varphi_d(z)\,dz
    =\varepsilon_\theta(x)V(x),
\]
where \(\varepsilon_\theta(x)\to0\) as \(\abs{x}\to\infty\), by the Gaussian
exponential moments. Hence
\begin{equation}
\label{eq:jump-lyapunov-estimate}
\begin{aligned}
&\abs{
\int_{\bbR^d}
\{V(x+z)-V(x)-\langle\nabla V(x),z\rangle\}
\varphi_d(z)\,dz
}        \\
&\qquad\le
C\left(\theta^2+\frac{\theta}{\rho(x)}+\varepsilon_\theta(x)\right)V(x).
\end{aligned}
\end{equation}
This is the only estimate in the Foster--Lyapunov argument which differs from
the diffusion case.

Combining \eqref{eq:diffusion-lyapunov-estimate} and
\eqref{eq:jump-lyapunov-estimate}, and absorbing the factor \(\iota\)
into \(C\), we have, for all sufficiently large \(\abs{x}\),
\[
    \mathcal A_b^JV(x)
    \le
    \left\{
        -\frac34\theta r
        +C\left(\theta^2+\frac{\theta}{\rho(x)}+\varepsilon_\theta(x)\right)
    \right\}V(x).
\]
First choose \(\theta\in(0,r\wedge1]\) so small that
\(C\theta^2\le \theta r/8\). Then choose \(R_1>2d\) so large that
\[
    C\left(\frac{\theta}{\rho(x)}+\varepsilon_\theta(x)\right)
    \le \frac{\theta r}{8},
    \qquad \abs{x}\ge R_1 .
\]
Thus, with \(a:=\theta r/2\),
\[
    \mathcal A_b^JV(x)\le -aV(x),
    \qquad \abs{x}\ge R_1,
\]
uniformly over \(b\in\ClassB_0^J(d,r,K)\).

For \(\abs{x}\le R_1\), set
\[
    G_\theta(x)
    :=
    \iota\int_{\bbR^d}
    \abs{
        V(x+z)-V(x)-\langle\nabla V(x),z\rangle
    }
    \varphi_d(z)\,dz .
\]
Define
\[
    C_0
    :=
    1+\sup_{\abs{x}\le R_1}
    \left\{
        aV(x)
        +K_d\abs{\nabla V(x)}
        +\frac12\abs{\Delta V(x)}
        +G_\theta(x)
    \right\}.
\]
Since \(V\), \(\nabla V\), and \(\nabla^2V\) are bounded on
\(\{\abs{x}\le R_1\}\), and since the Gaussian distribution has exponential
moments of every order, \(C_0<\infty\). Moreover, \(C_0\) depends only on
\((d,r,K,\iota)\). Combining the compact and non-compact estimates yields
\begin{equation}
\label{eq:levy-generator-drift}
    \mathcal A_b^J V(x)\le -aV(x)+C_0,
    \qquad x\in\bbR^d,
\end{equation}
uniformly over \(b\in\ClassB_0^J(d,r,K)\).

We next justify the use of the unbounded Lyapunov function \(V\). The process
\(X\) is a time-homogeneous Markov process with c\`adl\`ag trajectories, so
the extended-generator framework of Kulik~\cite[Section~3.1]{kul18}
applies. By the It\^o formula for jump diffusions,
\[
    V(X_t)-V(X_0)-\int_0^t \mathcal A_b^J V(X_s)\,ds
\]
is a local martingale. The nonlocal term is well-defined since
\[
    V(x+z)\le V(x)e^{\theta\abs{z}}
    \quad\text{and}\quad
    \int_{\bbR^d}e^{\theta\abs{z}}\varphi_d(z)\,dz<\infty .
\]
Thus \(V\in\operatorname{Dom}(\mathcal A_b^J)\) in the sense of
Kulik~\cite[Definition~3.2.1]{kul18}. Applying
Kulik~\cite[Theorem~3.2.3 and Example~3.2.5]{kul18} to
\eqref{eq:levy-generator-drift}, with \(\phi(v)=av\), yields
\begin{equation}
\label{eq:levy-lyapunov-semigroup}
    P_b^tV(x)
    \le
    e^{-at}V(x)+\frac{C_0}{a}(1-e^{-at}),
    \qquad t>0 .
\end{equation}
In particular, with \(\gamma:=e^{-a}\) and
\(K_1:=(C_0/a)(1-e^{-a})\),
we have
\begin{equation}
\label{eq:levy-drift-skeleton}
    P_b^1V(x)\le \gamma V(x)+K_1,
    \qquad x\in\bbR^d,
\end{equation}
uniformly over \(b\in\ClassB_0^J(d,r,K)\).

Next we verify the minorization condition. Let \((Q_b^t)_{t\ge0}\) be the
transition semigroup of the diffusion without jumps,
\[
    dY_t=b(Y_t)\,dt+dW_t .
\]
Since the compound-Poisson process has no jump on \([0,1]\) with probability
\(e^{-\iota}\), for every Borel set \(A\subset\bbR^d\),
\[
    P_b^1(x,A)\ge e^{-\iota}Q_b^1(x,A).
\]
Let \(q_b^1(x,y)\) be the transition density of \(Q_b^1\). By the lower bound
of Qian, Russo and Zheng~\cite{qia-etal03}, which is also used by
Oga and Koike~\cite[Lemma~3.1]{oga-koi24}, there exists a constant
\(\zeta_d>0\), depending only on \(d\), such that
\[
    q_b^1(x,y)
    \ge
    \frac{1}{(2\pi)^{d/2}}
    \exp\left\{
        -\frac{(\abs{x-y}+\norm{b})^2}{2}
        -\norm{b}(\zeta_d+\abs{x-y})
    \right\},
\]
where \(\norm{b}:=\sup_{x\in\bbR^d}\abs{b(x)}\). Since \(\norm{b}\le K_d\), this
implies
\[
    q_b^1(x,y)
    \ge
    2^{-3d/2}
    \exp\{-2\abs{x}^2-dK^2(3+\zeta_d)\}\psi(y),
\]
where
\(\psi(y)=(2/\pi)^{d/2}e^{-2\abs{y}^2}\).
Set
\begin{equation}\label{eq:levy-small-set}
    R:=\frac{3C_0}{a},
    \qquad
    \mathcal C:=\{x\in\bbR^d:V(x)\le R\}.
\end{equation}
Since \(\mathcal C\) is compact, there exists
\(\alpha_{\mathrm{diff}}>0\), depending only on \((d,r,K,\iota)\), such that
\[
    Q_b^1(x,dy)\ge \alpha_{\mathrm{diff}}\psi(y)\,dy,
    \qquad x\in\mathcal C.
\]
Therefore
\begin{equation}
\label{eq:levy-minorization}
    P_b^1(x,dy)\ge \alpha_J\psi(y)\,dy,
    \qquad x\in\mathcal C,
\end{equation}
where \(\alpha_J:=e^{-\iota}\alpha_{\mathrm{diff}}>0\). This is the jump-diffusion analogue of the minorization condition used in
Oga and Koike~\cite[Section~6]{oga-koi24}.

The rest of the argument is identical to the Harris-type contraction argument
in Oga and Koike~\cite[Section~6]{oga-koi24}. The drift condition
\eqref{eq:levy-drift-skeleton} and the minorization condition
\eqref{eq:levy-minorization} imply that there exist constants
\(\beta>0\) and \(\bar\alpha\in(0,1)\), depending only on
\((d,r,K,\iota)\), such that
\begin{equation}
\label{eq:levy-weighted-contraction}
    \rho_\beta(\mu_1P_b^n,\mu_2P_b^n)
    \le
    \bar\alpha^n\rho_\beta(\mu_1,\mu_2),
    \qquad n=1,2,\ldots,
\end{equation}
for all probability measures \(\mu_1,\mu_2\) with finite \(V\)-moments, where
\[
    \rho_\beta(\mu_1,\mu_2)
    :=
    \int_{\bbR^d}(1+\beta V(x))\abs{\mu_1-\mu_2}(dx).
\]
In particular, the skeleton chain admits a unique invariant distribution
\(\Pi_b\). Moreover, integrating
\eqref{eq:levy-lyapunov-semigroup} with respect to \(\Pi_b\) gives
\begin{equation}
\label{eq:levy-invariant-moment}
    \int_{\bbR^d}V(x)\Pi_b(dx)\le \frac{C_0}{a}.
\end{equation}

Finally, by the Markov representation of the \(\beta\)-mixing coefficient,
\[
    \beta_X(t)
    =
    \sup_{s\ge0}
    \int_{\bbR^d}
    \norm{\delta_xP_b^t-\Law_\eta P_b^{s+t}}\,
    \Law_\eta P_b^s(dx).
\]
Using \eqref{eq:levy-weighted-contraction} for the integer part of \(t\), and
using \eqref{eq:levy-lyapunov-semigroup} to bound the \(V\)-moment of
\(\Law_\eta P_b^s\) uniformly in \(s\), we obtain
\[
    \beta_X(t)
    \le
    C_\beta'\exp(-C_\beta t),
    \qquad t\ge0.
\]
Here \(C_\beta>0\) depends only on \((d,r,K,\iota)\), and \(C_\beta'\)
depends only on \((d,r,K,\iota)\) and \(\bbE[V(\eta)]\). Since
\(\theta\le r\), \(\bbE[V(\eta)]\le e^\theta
\bbE[e^{r\abs{\eta}}]<\infty\). This proves the desired uniform exponential
\(\beta\)-mixing bound.

\section{Proof of Theorem~\ref{thm:minimax-lower-gaussian-cp}}
\label{sec:proof-minimax}

Throughout the rest of this section, we write
\[
    \ClassB^J
    :=
    \ClassB_i^J(r,q,\mathbf d,\mathbf t,\bm\beta,K),
    \qquad
    \norm{g}_2
    :=
    \norm{g}_{L^2([0,1]^d)} .
\]
The next lemma reduces the lower bound for the generalization error to a
standard \(L^2([0,1]^d)\)-minimax lower bound. 

\begin{lemma}
\label{lem:levy-minimax-risk-reduction}
Assume that \(\bbE[\exp(r\abs{\eta})]<\infty\), \(\Delta\le1\), and
\(n\Delta\to\infty\). Then there exists a constant \(c>0\), depending only on
\((d,r,K,\iota)\), such that
\[
\liminf_{n\to\infty}
\inf_{\widehat f_n}
\sup_{b\in\ClassB^J}
\phi_n^{-1}
\Risk_n(\widehat f_n,f_0)
\ge
c
\liminf_{n\to\infty}
\inf_{\widehat f_n}
\sup_{b\in\ClassB^J}
\phi_n^{-1}
\bbE\!\left[
    \norm{\widehat f_n-f_0}_2^2
\right],
\]
where the infima are taken over all estimators.
\end{lemma}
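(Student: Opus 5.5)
The plan is to compare the generalization risk with the \(L^2([0,1]^d)\)-risk by a density lower bound for the marginal laws of \(X'_{k\Delta}\) on \([0,1]^d\). Since \(X'\) is an independent copy of \(X\) and \(\widehat f_n\) is \(\mathscr D_n\)-measurable, Fubini gives
\[
\Risk_n(\widehat f_n,f_0)
=
\bbE\biggl[\frac1n\sum_{k=0}^{n-1}\int_{[0,1]^d}(\widehat f_n-f_0)^2(x)\,\Law_{X_{k\Delta}}(dx)\biggr],
\]
where we may restrict to \([0,1]^d\) because every element of \(\ClassF\) and \(f_0\) vanishes outside \([0,1]^d\). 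For a general estimator we first replace \(\widehat f_n\) by \(\widehat f_n\one_{[0,1]^d}\), which does not increase either risk. It then suffices to find \(c_0>0\) and \(k_0\), depending only on \((d,r,K,\iota)\) (and the law of \(\eta\) only through \(\bbE[e^{r\abs{\eta}}]\)), such that \(\Law_{X_{k\Delta}}(dx)\ge c_0\,dx\) on \([0,1]^d\) for all \(k\Delta\ge1\), uniformly over \(b\in\ClassB^J\). Granting this, the indices with \(k\Delta<1\) number at most \(1/\Delta+1\), a fraction at most \(1/(n\Delta)+1/n\to0\) of all indices, so
\[
\Risk_n(\widehat f_n,f_0)\ge c_0\Bigl(1-\tfrac{1}{n\Delta}-\tfrac1n\Bigr)\bbE\bigl[\norm{\widehat f_n-f_0}_2^2\bigr],
\]
and taking \(\sup_b\), \(\inf_{\widehat f_n}\) and \(\liminf\) yields the claim with \(c=c_0\).

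The key step is the uniform density lower bound. For \(t\ge1\), write \(\Law_{X_t}=\Law_{X_{t-1}}P_b^1\). The minorization argument of Section~\ref{sec:proof-mixing} gives
\[
P_b^1(x,dy)\ge e^{-\iota}q_b^1(x,y)\,dy\ge e^{-\iota}2^{-3d/2}e^{-2\abs{x}^2-dK^2(3+\zeta_d)}\psi(y)\,dy,
\]
coming from the no-jump event and the Qian--Russo--Zheng lower bound. On \(y\in[0,1]^d\), \(\psi(y)\ge(2/\pi)^{d/2}e^{-2d}\). Hence
\[
\frac{d\Law_{X_t}}{dy}(y)\ge c_1\int e^{-2\abs{x}^2}\,\Law_{X_{t-1}}(dx)\ge c_1e^{-2R_0^2}\,\bbP(\abs{X_{t-1}}\le R_0).
\]
To bound \(\bbP(\abs{X_{t-1}}\le R_0)\) from below uniformly, we use \eqref{eq:levy-lyapunov-semigroup}: \(\bbE[V(X_{s})]\le \bbE[V(\eta)]+C_0/a\) for all \(s\ge0\), and \(\bbE[V(\eta)]\le e^\theta\bbE[e^{r\abs{\eta}}]\). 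By Markov's inequality, choosing \(R_0\) with \(e^{\theta R_0}\ge 2(\bbE V(\eta)+C_0/a)\) gives \(\bbP(\abs{X_{t-1}}\le R_0)\ge1/2\). This yields \(c_0\), which depends on \(\bbE[e^{r\abs{\eta}}]\) as well; presumably the stated dependence ``only on \((d,r,K,\iota)\)'' either tacitly includes this or is obtained by running the argument from a stationary start.

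The main obstacle is making the constants genuinely uniform in \(b\): this requires the Lyapunov constants \(a,C_0,\theta\) and the heat-kernel lower bound to depend only on \(\norm b\le\sqrt d K\), which Section~\ref{sec:proof-mixing} already provides. It also requires some care with measurability and the \(\mathscr D_n\)-independence of \(X'\) in the Fubini step. If the dependence on \(\eta\) must be removed, we would instead use that the initial-segment contribution is negligible and apply the bound only for \(k\Delta\ge T_0\), with \(T_0\) chosen so that \(e^{-aT_0}\bbE V(\eta)\le C_0/a\). This makes \(R_0\) depend only on \(C_0/a\), at the price of discarding a vanishing fraction \(T_0/(n\Delta)\) of the indices.
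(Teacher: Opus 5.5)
Your proof is correct, but it takes a different route from the paper's.

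\textbf{The paper's route.} The paper first proves total-variation ergodicity, \(\norm{\delta_xP_b^t-\Pi_b}_{\mathrm{TV}}\le Ce^{-ct}(1+V(x))\), from the weighted Harris contraction. It then lower-bounds the invariant law of the skeleton chain by \(\Pi_b(dy)\ge c_0\one_{[0,1]^d}(y)\,dy\), using the same one-step minorization and Markov's inequality on \(V\). Next it clips the estimator to \([-K,K]\) so that the loss is bounded. Finally it compares \(\Risk_n\) with the stationary risk up to an additive error \(C/(n\Delta)\), which disappears because \(\phi_n^{-1}=o(n\Delta)\).

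\textbf{Your route.} You run the minorization-plus-tightness argument on each time marginal \(\Law_{X_{t-1}}\) instead of on \(\Pi_b\). This gives \(\Law_{X_t}\ge c_0\one_{[0,1]^d}\,dy\) directly, and hence a multiplicative comparison \(\Risk_n(\widehat f_n,f_0)\ge c_0(1-o(1))\,\bbE[\norm{\widehat f_n-f_0}_2^2]\), uniform in \(b\) and in the estimator.

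\textbf{What each buys.} Your argument needs neither the invariant law \(\Pi_b\), nor geometric ergodicity, nor clipping (only nonnegativity of the integrand is used), nor the rate condition \(\phi_n^{-1}=o(n\Delta)\). The paper's argument gives a two-sided comparison with the stationary risk, and it reuses machinery already built for Lemma~\ref{lem:uniform-mixing-gaussian-cp}.

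\textbf{The constant \(c\).} Your burn-in device correctly makes \(c\) depend only on \((d,r,K,\iota)\). In the paper, the dependence on \(\bbE[e^{r\abs{\eta}}]\) sits only in the vanishing additive term, which achieves the same thing.

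\textbf{Two small points.} The tightness bound is needed at time \(k\Delta-1\), so the indices you discard should be those with \(k\Delta<T_0+1\), not \(k\Delta<T_0\). Also, your radius \(R_0\) clashes with the paper's \(R_0=(\sqrt d+2d)/2\), so it should be renamed.
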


\begin{proof}
The proof follows the reduction argument of
Oga and Koike~\cite[Lemma~7.1]{oga-koi24}. We only verify that the
ergodic and invariant-measure bounds required in the argument are uniform
over the present L\'evy submodel.

Throughout the proof, we use the notation introduced in
Section~\ref{sec:proof-mixing}. Let \(\Pi_b\) be the
invariant distribution of the skeleton chain obtained there. For
\(t\ge0\), write \(t=m+s\), where \(m=\lfloor t\rfloor\) and
\(s\in[0,1)\). By the semigroup property, the invariance of \(\Pi_b\) for the skeleton
chain, and \eqref{eq:levy-weighted-contraction},
\[
\begin{aligned}
\rho_\beta(\delta_xP_b^t,\Pi_b)
&=
\rho_\beta\bigl((\delta_xP_b^s)P_b^m,\Pi_bP_b^m\bigr) \\
&\le
\bar\alpha^m\rho_\beta(\delta_xP_b^s,\Pi_b) \\
&\le
\bar\alpha^m
\left\{
2+\beta P_b^sV(x)
+\beta\int_{\bbR^d}V(y)\Pi_b(dy)
\right\} \\
&\le
C\bar\alpha^m(1+V(x)),
\end{aligned}
\]
where the last inequality follows from
\eqref{eq:levy-lyapunov-semigroup} and
\eqref{eq:levy-invariant-moment}. Hence there exist constants \(C,c>0\),
independent of \(b\in\ClassB^J\), such that
\begin{equation}\label{eq:levy-tv-ergodicity}
    \norm{\delta_xP_b^t-\Pi_b}_{\mathrm{TV}}
    \le
    Ce^{-ct}(1+V(x)),
    \qquad x\in\bbR^d,\quad t\ge0.
\end{equation}

We also need a lower bound for \(\Pi_b\) on \([0,1]^d\).
By the invariance of \(\Pi_b\) and the minorization condition
\eqref{eq:levy-minorization},
\[
    \Pi_b(dy)
    =
    \int_{\bbR^d}P_b^1(x,dy)\Pi_b(dx)
    \ge
    \alpha_J\Pi_b(\mathcal C)\psi(y)\,dy .
\]
Moreover, by \eqref{eq:levy-invariant-moment},
\eqref{eq:levy-small-set}, and Markov's inequality,
\[
    1-\Pi_b(\mathcal C)
    =
    \Pi_b(V>R)
    \le
    R^{-1}\int_{\bbR^d}V(x)\Pi_b(dx)
    \le
    \frac13 .
\]
Consequently, there exists a constant \(c_0>0\), depending only on
\((d,r,K,\iota)\), such that
\begin{equation}
\label{eq:levy-invariant-lower-bound}
    \Pi_b(dy)
    \ge
    c_0\one_{[0,1]^d}(y)\,dy,
    \qquad b\in\ClassB^J .
\end{equation}

Fix an arbitrary estimator \(\widehat f_n\), and define its clipped version by
\[
    \widetilde f_n
    :=
    \bigl(\{(-K)\vee \widehat f_n\}\wedge K\bigr)
    \one_{[0,1]^d}.
\]
Since \(\norminf{f_0}\le K\) and
\(\supp(f_0)\subset[0,1]^d\), we have
\(\abs{\widetilde f_n-f_0}\le \abs{\widehat f_n-f_0}\).
Hence
\begin{equation}\label{eq:levy-risk-clipping}
    \Risk_n(\widehat f_n,f_0)
    \ge
    \Risk_n(\widetilde f_n,f_0).
\end{equation}

Using the preceding total variation bound and
\(\abs{\widetilde f_n-f_0}\le2K\), we obtain, uniformly in
\(b\in\ClassB^J\),
\begin{equation}\label{eq:levy-risk-stationary-comparison}
\begin{aligned}
&\abs{
\Risk_n(\widetilde f_n,f_0)
-
\bbE\left[
    \int_{\bbR^d}
    \abs{\widetilde f_n(y)-f_0(y)}^2\,\Pi_b(dy)
\right]
}                                                    \\
&\qquad\le
4K^2
\frac1n\sum_{k=0}^{n-1}
\int_{\bbR^d}
\norm{\delta_xP_b^{k\Delta}-\Pi_b}_{\mathrm{TV}}\,
\Law_\eta(dx)
\le
\frac{C}{n\Delta}.
\end{aligned}
\end{equation}
The last inequality follows from
\eqref{eq:levy-tv-ergodicity},
\eqref{eq:levy-initial-V-moment}, and
\[
    \frac1n\sum_{k=0}^{n-1}e^{-ck\Delta}
    \le
    \frac{C}{n\Delta},
    \qquad \Delta\le1.
\]
The constant \(C\) may depend on
\(\bbE[V(\eta)]\), but is independent of \(b\), \(n\), and \(\Delta\).

By \eqref{eq:levy-invariant-lower-bound},
\begin{equation}\label{eq:levy-stationary-L2-lower-bound}
    \bbE\left[
    \int_{\bbR^d}
    \abs{\widetilde f_n(y)-f_0(y)}^2\,\Pi_b(dy)
    \right]
    \ge
    c_0
    \bbE\left[
        \norm{\widetilde f_n-f_0}_2^2
    \right].
\end{equation}
Combining
\eqref{eq:levy-risk-clipping},
\eqref{eq:levy-risk-stationary-comparison}, and
\eqref{eq:levy-stationary-L2-lower-bound}, we obtain
\[
    \Risk_n(\widehat f_n,f_0)
    \ge
    c_0
    \bbE\left[
        \norm{\widetilde f_n-f_0}_2^2
    \right]
    -
    \frac{C}{n\Delta}.
\]Taking the supremum over \(b\in\ClassB^J\) and then the infimum over all
estimators yields
\[
\inf_{\widehat f_n}
\sup_{b\in\ClassB^J}
\Risk_n(\widehat f_n,f_0)
\ge
c_0
\inf_{\widehat f_n}
\sup_{b\in\ClassB^J}
\bbE\left[
    \norm{\widehat f_n-f_0}_2^2
\right]
-
\frac{C}{n\Delta},
\]
because \(\widetilde f_n\) is itself an estimator.

Finally, since
\(\phi_n^{-1}=o(n\Delta)\),
multiplying both sides by \(\phi_n^{-1}\) and taking the lower limit gives the
claim.
\end{proof}

By Lemma~\ref{lem:levy-minimax-risk-reduction}, it remains to derive a lower
bound for the \(L^2([0,1]^d)\)-estimation error over
\(\ClassGb(q,\mathbf d,\mathbf t,\boldsymbol\beta,K)\).
The following lemma provides a finite packing of the compositional class
together with compactly supported, globally Lipschitz extensions needed
for the drift construction. Its proof is given in
\ref{app:packing-compositional-class}.

Set \(R_0:=(\sqrt d+2d)/2\).

\begin{lemma}
\label{lem:packing-compositional-class}
Assume that \(\beta^{**}\ge1\) and
\(t_j\le\min\{d_0,\ldots,d_{j-1}\}\) for \(j=1,\ldots,q\).
There exist \(K_{\rm pack},C_{\rm pack}>0\), depending only on
\((q,\mathbf d,\mathbf t,\boldsymbol\beta)\), such that for every \(T>0\)
there are \(\kappa_T,N_T>0\), depending only on \(T\) and these class
parameters, for which the following holds whenever \(n\Delta\ge N_T\).
There exist an integer \(M\ge2\), functions
\[
f_{(0)},\ldots,f_{(M)}
\in
\ClassGb
(q,\mathbf d,\mathbf t,\boldsymbol\beta,K_{\rm pack}),
\]
and extensions
\(\widetilde f_{(0)},\ldots,\widetilde f_{(M)}:\bbR^d\to\bbR\)
satisfying:
\begin{enumerate}
    \item[\emph{(i)}] For all \(0\le j<k\le M\),
    \[
        n\Delta\,
        \norm{\widetilde f_{(j)}-\widetilde f_{(k)}
        }_{L^2(\bbR^d)}^2
        \le
        \frac{\log M}{9T},
    \]
    and
    \[
        \norm{f_{(j)}-f_{(k)}}_{L^2([0,1]^d)}^2
        \ge
        \kappa_T\phi_n .
    \]

    \item[\emph{(ii)}] For every \(j=0,1,\ldots,M\),
    \[
        f_{(j)}
        =
        \widetilde f_{(j)}\one_{[0,1]^d},
    \]
    \[
        \supp(\widetilde f_{(j)})
        \subset
        \{x\in\bbR^d:\abs{x}\le R_0\},
    \]
    and
    \[
        \abs{\widetilde f_{(j)}(x)-\widetilde f_{(j)}(y)}
        \le
        K_{\rm pack}\abs{x-y},
        \qquad
        x,y\in\bbR^d .
    \]

    \item[\emph{(iii)}] For every \(j=0,1,\ldots,M\),
    \[
        \norminf{\widetilde f_{(j)}}
        \le C_{\rm pack}.
    \]
\end{enumerate}
\end{lemma}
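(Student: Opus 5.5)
The plan is to adapt the Schmidt-Hieber packing construction for the compositional lower bound, as used in Oga and Koike's lower-bound proof, and then to add a smooth cutoff so that the resulting functions extend to all of \(\bbR^d\). First I fix the rate-determining index \(i^*\) and build a Varshamov--Gilbert packing of a Hölder ball in \(t^*\) variables, at bandwidth \(h\asymp (n\Delta)^{-1/(2\beta^{**}+t^*)}\) times a constant depending on \(T\). Concretely, let \(K\) be a \(C^\infty\) bump supported in \((0,1)^{t^*}\). Put \(\psi_{\mathbf u}(x)=h^{\beta^*}K((x-\mathbf u)/h)\) on a grid of \(m\asymp h^{-t^*}\) cells, and take \(\phi_{\omega}=\sum_{\mathbf u}\omega_{\mathbf u}\psi_{\mathbf u}\) with \(\omega\in\{0,1\}^m\). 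By Varshamov--Gilbert there are \(\omega^{(0)},\dots,\omega^{(M)}\) with \(\log M\ge m/8\) and pairwise Hamming distance at least \(m/8\).

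Next, I embed \(\phi_\omega\) into the composition exactly as Schmidt-Hieber does. The layers \(j<i^*\) act as coordinate projections, which needs \(t_j\le\min\{d_0,\dots,d_{j-1}\}\). Layer \(i^*\) is \(\phi_\omega\). The layers \(j>i^*\) are maps \(y\mapsto y_1^{\beta_j\wedge1}\), appropriately shifted, or the identity when \(\beta_j\ge1\). This turns the \(L^2\) separation into \(h^{2\beta^*B}\cdot h^{t^*}\cdot(\text{Hamming distance})\asymp h^{2\beta^{**}}\asymp\phi_n\), which gives the lower bound in (i) with \(\kappa_T\) depending on \(T\) through the constant in \(h\). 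The upper bound in (i) comes from the same computation: each pairwise squared \(L^2\) distance is \(\lesssim h^{2\beta^{**}}\), and \(n\Delta h^{2\beta^{**}}\lesssim c_T^{-\,\cdot}\,h^{-t^*}\lesssim \log M/(9T)\) once the bandwidth constant is chosen small in terms of \(T\). All Hölder norms stay bounded by a constant \(K_{\rm pack}\) depending only on the class parameters. The requirement \(n\Delta\ge N_T\) ensures \(h\le 1\) and \(m\ge 8\), so that \(M\ge2\).

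For (ii) and (iii), I write each \(f_{(j)}\) as the restriction to \([0,1]^d\) of a globally defined function. The naive extension of the composition need not be compactly supported or Lipschitz, because the maps \(y^{\beta\wedge 1}\) are non-Lipschitz at zero. The condition \(\beta^{**}\ge1\) is meant to resolve this. Since \(\beta^{**}=\beta^* B\ge 1\), the overall function \(h^{\beta^*B}K(\cdot/h)^{B}\)-type bumps have gradient of order \(h^{\beta^{**}-1}\le1\), uniformly in \(n\). To use this, I choose the bumps so that the inner layer values are nonnegative and vanish to high order; then powers of them remain \(C^1\) with uniformly bounded gradients. I then define \(\widetilde f_{(j)}:=\chi\cdot F_{(j)}\), where \(F_{(j)}\) is the composition evaluated on \(\bbR^d\) with the inner coordinate projections extended naturally. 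Here \(\chi\) is a smooth cutoff equal to \(1\) on \([0,1]^d\) and supported in \(\{\abs{x}\le R_0\}\), which is possible since \(\sqrt d/2\) is the radius of the cube about its center and \(R_0\) is larger. Placing all bumps in the interior cells keeps \(\widetilde f_{(j)}\) equal to \(f_{(j)}\) on the cube, and the \(L^2(\bbR^d)\) distances of the extensions are comparable to those on the cube up to a constant, which is absorbed into the \(T\)-dependent bandwidth. Boundedness in (iii) is immediate.

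I expect the main obstacle to be the uniform Lipschitz bound across the outer non-Lipschitz layers. I would first try to make all intermediate outputs equal to a fixed constant outside the bump support, so that the outer maps are evaluated only near points where they are smooth. The singular point \(0\) of \(y^{\beta\wedge1}\) would then be reached only through the bump itself, and there the chain rule together with \(\beta^{**}\ge1\) controls the derivative.
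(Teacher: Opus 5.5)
Your proposal is essentially the paper's proof: Schmidt-Hieber's embedding (coordinate carriers below \(i^*\), \(\varphi_w\) at layer \(i^*\), unshifted maps \(x_1\mapsto x_1^{\beta_i\wedge1}\) above). The composite then equals \(\varphi_w^B=\sum_u w_u h^{\beta^{**}}H((\cdot-u)/h)\), where \(H=\prod_r\mathsf K(z_r)^B\) is smooth because \(\mathsf K(x)=c\exp\{-1/(x(1-x))\}\) is flat at \(0,1\). The remaining steps also match: the gradient bound \(h^{\beta^{**}-1}\norminf{\nabla H}\) from \(\beta^{**}\ge1\), the cutoff \(\chi\), the \(L^2\) comparison with constant \(1\vee(2R_0)^{d-t^*}\), and a Varshamov--Gilbert packing whose bandwidth constant shrinks with \(T\).

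The obstacle you raise at the end is already settled by that identity. The paper simply takes the explicit bump sum as the global extension, so no non-Lipschitz map is ever composed on \(\bbR^d\). Drop the alternative of holding intermediate outputs at a nonzero constant: it would keep the outer maps in their smooth regime, reduce the bump height to order \(h^{\beta^*}\) instead of \(h^{\beta^{**}}\), and forfeit the rate \(\phi_n\).
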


We next embed this packing into a family of admissible drift coefficients.
For a bounded globally Lipschitz drift \(b\), let \(P_{b,n}\) denote the law of
\(X^n=(X_t)_{0\le t\le n\Delta}\) on
\(D([0,n\Delta],\bbR^d)\) under \eqref{eq:gaussian-cp-submodel}, and let
\(E_{b,n}\) denote the corresponding expectation.

Set
\[
    R
    :=
    r\vee C_{\rm pack},
    \qquad
    t_0
    :=
    \frac{\log2}{36R^2}.
\]
Let
\[
    v_d
    :=
    \lambda_d\bigl(\{z\in\bbR^d: \abs{z}\le1\}\bigr)
    =
    \frac{\pi^{d/2}}{\Gamma(d/2+1)}
\]
be the volume of the \(d\)-dimensional unit ball, and define
\[
    \Gamma
    :=
    \frac{2^d}{v_d}
    \left(
        R+\frac{1}{\sqrt{2\pi t_0}}
    \right)^d .
\]
Let \(\psi\in C^\infty(\bbR;[0,1])\) be such that
\[
    \psi(u)=0\quad\text{for } \abs{u}\le R_0,
    \qquad
    \psi(u)=1\quad\text{for } \abs{u}>2d .
\]
Define
\[
    D(x)
    :=
    \begin{cases}
        r\dfrac{x}{\abs{x}}\psi(\abs{x}),&x\ne0,\\[2mm]
        0,&x=0,
    \end{cases}
\]
and denote its Lipschitz constant by \(L_D<\infty\).
Choose
\[
    K_0\ge R\vee(K_{\rm pack}+L_D)
\]
and fix \(K\ge K_0\).

Apply Lemma~\ref{lem:packing-compositional-class} with \(T=\Gamma\), and take
\(n\) sufficiently large that
\[
    n\Delta\ge N_\Gamma\vee t_0 .
\]
This yields \(M\ge2\), functions
\(f_{(0)},\ldots,f_{(M)}\), and extensions
\(\widetilde f_{(0)},\ldots,\widetilde f_{(M)}\) satisfying the properties of
Lemma~\ref{lem:packing-compositional-class}.

For \(j=0,\ldots,M\), define
\begin{equation}
\label{eq:levy-fano-drift-family}
    b_j(x)
    :=
    \widetilde f_{(j)}(x)\mathbf e_i-D(x),
    \qquad x\in\bbR^d,
\end{equation}
where \(\mathbf e_i\) is the \(i\)-th coordinate vector.

Since
\(\supp(\widetilde f_{(j)})
\subset\{\abs{x}\le R_0\}\)
and \(D=0\) on \(\{\abs{x}\le R_0\}\),
\begin{equation}
\label{eq:levy-fano-drift-bound}
    \abs{b_j(x)}
    \le
    \max\{\norminf{\widetilde f_{(j)}},r\}
    \le R,
    \qquad x\in\bbR^d.
\end{equation}
Moreover, for all \(x,y\in\bbR^d\),
\begin{equation}
\label{eq:levy-fano-drift-lipschitz}
\begin{aligned}
    \abs{b_j(x)-b_j(y)}
    &\le
    \abs{\widetilde f_{(j)}(x)-\widetilde f_{(j)}(y)}
    +
    \abs{D(x)-D(y)} \\
    &\le
    (K_{\rm pack}+L_D)\abs{x-y}
    \le
    K\abs{x-y}.
\end{aligned}
\end{equation}
Thus \(b_0,\ldots,b_M\) are bounded and globally Lipschitz.

Write
\[
    P_j:=P_{b_j,n},
    \qquad
    E_{j,n}:=E_{b_j,n},
    \qquad
    j=0,\ldots,M,
\]
and, for \(j=1,\ldots,M\), set
\[
    \theta_j(x)
    :=
    b_j(x)-b_0(x)
    =
    \bigl(
        \widetilde f_{(j)}(x)-\widetilde f_{(0)}(x)
    \bigr)\mathbf e_i.
\]

\begin{lemma}
\label{lem:levy-girsanov-kl}
For every \(j=1,\ldots,M\),
\[
    P_j\sim P_0,
\]
and
\begin{equation}
\label{eq:levy-kl-identity-final}
    \int
    \log\left(\frac{dP_j}{dP_0}\right)dP_j
    =
    \frac12
    E_{j,n}\left[
        \int_0^{n\Delta}
        \abs{\widetilde f_{(j)}(X_u)-\widetilde f_{(0)}(X_u)}^2\,du
    \right].
\end{equation}
\end{lemma}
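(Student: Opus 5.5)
The plan is to apply a Girsanov theorem for semimartingales with jumps in which only the drift changes. The Brownian part has identity covariance, and the jump compensator $\iota\varphi_d(z)\,dz\,dt$ is the same under $P_j$ and $P_0$. The density process should therefore be a pure Brownian exponential, with no jump correction. Concretely, on the canonical space $D([0,n\Delta],\bbR^d)$ I will use the coordinate process $X$. I will decompose its canonical semimartingale characteristics under $P_0$ as follows: the continuous martingale part is $X^c$; the jump measure $\mu^X$ has compensator $dt\,\iota\varphi_d(z)\,dz$; and the drift is $b_0(X_{t-})\,dt$, with jumps taken in non-truncated form. This is legitimate because $\nu_J$ is finite and centered, so no truncation function is needed. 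I then define
\[
    L_t:=\exp\Bigl(\int_0^t\theta_j(X_{u-})\cdot dX^c_u-\frac12\int_0^t\abs{\theta_j(X_u)}^2\,du\Bigr),
    \qquad 0\le t\le n\Delta .
\]
Since $\theta_j=(\widetilde f_{(j)}-\widetilde f_{(0)})\mathbf e_i$ is bounded by $2C_{\rm pack}$ (Lemma~\ref{lem:packing-compositional-class}(iii)), Novikov's criterion holds trivially on the finite horizon $[0,n\Delta]$. Hence $L$ is a true $P_0$-martingale with $E_{0,n}[L_{n\Delta}]=1$.

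Next define $Q$ by $dQ:=L_{n\Delta}\,dP_0$. By Girsanov's theorem (Jacod and Shiryaev, Theorem~III.3.24), under $Q$ the characteristics of $X$ are as follows: the drift becomes $b_0+\theta_j=b_j$; the covariance is unchanged; and the jump compensator is unchanged, because the density has no $\mu^X$-component (the factor $Y\equiv1$). Thus $Q$ solves the martingale problem for \eqref{eq:gaussian-cp-submodel} with drift $b_j$ and initial law $\Law_\eta$. Well-posedness of this martingale problem, which holds for bounded $b_j$ as noted via \cite[Corollary~III.2.41]{jac-shi03}, then gives $Q=P_j$. Moreover, $L_{n\Delta}>0$ $P_0$-a.s., so $P_j\sim P_0$ with $dP_j/dP_0=L_{n\Delta}$. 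Measurability is a minor point: $X^c$ must be realised as a functional on the canonical space. It is the continuous martingale part of $X$ under $P_0$, and equals $X_t-X_0-\int_0^tb_0(X_s)\,ds-\sum_{s\le t}\Delta X_s$ (a finite sum, since activity is finite).

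For the KL identity, I will write
\[
    \log L_{n\Delta}=\int_0^{n\Delta}\theta_j(X_{u-})\cdot dX^c_u-\frac12\int_0^{n\Delta}\abs{\theta_j(X_u)}^2\,du .
\]
Under $P_j$, the process $\widetilde W_t:=X^c_t-\int_0^t\theta_j(X_u)\,du$ is a standard Brownian motion; it is the continuous martingale part of $X$ under $P_j$. Substituting gives
\[
    \log L_{n\Delta}=\int_0^{n\Delta}\theta_j\cdot d\widetilde W_u+\frac12\int_0^{n\Delta}\abs{\theta_j(X_u)}^2\,du .
\]
The stochastic integral has a bounded integrand on a finite horizon, so it has zero $P_j$-expectation. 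Using $\abs{\theta_j}^2=\abs{\widetilde f_{(j)}-\widetilde f_{(0)}}^2$ then yields \eqref{eq:levy-kl-identity-final}.

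The main obstacle is the identification $Q=P_j$. This requires that Girsanov's theorem be applied in the semimartingale-characteristics form, so that the absence of any jump factor in $L$ provably leaves the jump compensator intact, and it relies on uniqueness in law for the jump SDE. I will handle this by checking the characteristics transformation formula explicitly and then invoking the well-posedness already cited for $\ClassB_0^J$-type bounded drifts. Here $b_j$ is bounded by \eqref{eq:levy-fano-drift-bound} and Lipschitz by \eqref{eq:levy-fano-drift-lipschitz}, so even pathwise uniqueness is available as a fallback.
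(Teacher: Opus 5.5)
Your proposal is correct and is essentially the paper's argument. In both, the likelihood ratio is the pure Brownian exponential $\exp\{\int_0^{n\Delta}\theta_j(X_{u-})^\top dX^c_u-\frac12\int_0^{n\Delta}\abs{\theta_j(X_u)}^2\,du\}$ with no jump factor. The KL identity then follows by writing $X^c$ as the $P_j$-Brownian motion plus $\int\theta_j\,du$ and dropping the mean-zero stochastic integral. The only difference is how this density is obtained. The paper cites S{\o}rensen's Corollary~2.2, with jump-density ratio identically one, first for $P_j^x,P_0^x$ and then integrated over $\Law_\eta$. You instead derive it directly via Novikov, Jacod--Shiryaev's Theorem~III.3.24 with $Y\equiv1$, and uniqueness of the martingale problem (or pathwise uniqueness for the Lipschitz drift $b_j$), which makes the identification $Q=P_j$ explicit.
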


\begin{proof}
We apply S{\o}rensen~\cite[Corollary~2.2]{sor91} with \(T=n\Delta\).
For \(x\in\bbR^d\), let \(P_j^x\) denote the law of \(X\) on
\(D([0,T],\bbR^d)\) conditional on \(X_0=x\).
The assumptions of the corollary are satisfied since \(b_j\) is bounded,
the diffusion covariance is \(I_d\), and, for every \(j\), the jump mechanism
is the same compound-Poisson mechanism with intensity \(\iota\), jump-size
density \(\varphi_d\), and jump transformation \(z\mapsto z\).
Hence the jump-density ratio in S{\o}rensen's notation is identically one.
Moreover,
\[
    \int_0^T \abs{b_j(X_{u-})-b_0(X_{u-})}^2\,du<\infty
\]
almost surely, since \(b_j-b_0=\theta_j\) is bounded.

Hence \(P_j^x\sim P_0^x\), and
\[
    \frac{dP_j^x}{dP_0^x}
    =
    \exp\left\{
        \int_0^T \theta_j(X_{u-})^\top dM_u^{0,c}
        -
        \frac12\int_0^T \abs{\theta_j(X_{u-})}^2\,du
    \right\},
\]
where \(M^{0,c}\) is the continuous martingale part under \(P_0^x\).
Since the initial law is common to all \(j\), integration with respect to
\(\Law(\eta)\) gives the same identity for \(P_j\) and \(P_0\), and
in particular \(P_j\sim P_0\).

Under \(P_j\),
\[
    M_t^{j,c}
    =
    M_t^{0,c}
    -
    \int_0^t \theta_j(X_{u-})\,du,
\]
so
\[
    \log\left(\frac{dP_j}{dP_0}\right)
    =
    \int_0^T \theta_j(X_{u-})^\top dM_u^{j,c}
    +
    \frac12\int_0^T \abs{\theta_j(X_{u-})}^2\,du .
\]
The stochastic integral has mean zero because \(\theta_j\) is bounded.
Therefore,
\[
    \int
    \log\left(\frac{dP_j}{dP_0}\right)dP_j
    =
    \frac12
    E_{j,n}\left[
        \int_0^T \abs{\theta_j(X_u)}^2\,du
    \right].
\]
Finally,
\(\abs{\theta_j(x)}^2 = \abs{\widetilde f_{(j)}(x)-\widetilde f_{(0)}(x)}^2\),
which gives \eqref{eq:levy-kl-identity-final}.
\end{proof}

\begin{lemma}
\label{lem:levy-occupation-bound}
For every \(j=1,\ldots,M\),
\[
\begin{aligned}
& E_{j,n}\left[
    \int_0^{n\Delta}
    \abs{\widetilde f_{(j)}(X_u)-\widetilde f_{(0)}(X_u)}^2\,du
\right] \\
&\qquad\le
4R^2t_0
+
\Gamma n\Delta
\norm{\widetilde f_{(j)}-\widetilde f_{(0)}
}_{L^2(\bbR^d)}^2 .
\end{aligned}
\]
\end{lemma}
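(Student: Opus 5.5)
The plan is to split the time integral at \(t_0\). Put \(g:=\abs{\widetilde f_{(j)}-\widetilde f_{(0)}}^2\). By Lemma~\ref{lem:packing-compositional-class}(iii) and \(R=r\vee C_{\rm pack}\), we have \(0\le g\le 4R^2\). On the initial stretch \([0,t_0]\) (recall \(n\Delta\ge t_0\)), the trivial bound gives
\[
E_{j,n}\Bigl[\int_0^{t_0}g(X_u)\,du\Bigr]\le 4R^2t_0 .
\]
On \([t_0,n\Delta]\), Fubini reduces the remaining contribution to \(\int_{t_0}^{n\Delta}E_{j,n}[g(X_u)]\,du\). It therefore suffices to show
\[
E_{j,n}[g(X_u)]\le\Gamma\int_{\bbR^d}g(y)\,dy \qquad\text{for every } u\ge t_0,
\]
that is, a uniform \(L^\infty\) bound \(\Gamma\) on the density of \(X_u\) under \(P_j\) for \(u\ge t_0\). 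Integrating this bound over an interval of length at most \(n\Delta\) gives the second term.

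To obtain the density bound, condition on \(\mathcal F_{u-t_0}\). By the Markov property, it is enough to bound, uniformly in the starting point \(x\), the density of \(X_{t_0}\) started at \(x\) under drift \(b_j\). Write
\[
X_{t_0}=x+\int_0^{t_0}b_j(X_s)\,ds+W_{t_0}+J_{t_0}.
\]
Condition further on the jump process \(J\), which is independent of \(W\). Given \(J\), the process solves an SDE with bounded drift \(\abs{b_j}\le R\) (by \eqref{eq:levy-fano-drift-bound}), driven by Brownian motion plus a deterministic c\`adl\`ag shift. The jump component contributes only a translation, and a translation does not affect a sup-norm density bound. So it remains to bound \(\sup_y\) of the density of \(x+\int_0^{t_0}b_j\,ds+W_{t_0}\) with \(\abs{b_j}\le R\).

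The constant \(\Gamma=\frac{2^d}{v_d}(R+(2\pi t_0)^{-1/2})^d\) suggests the following elementary small-ball argument, which avoids Girsanov. For a ball \(B(y,\rho)\), the event \(\{X_{t_0}\in B(y,\rho)\}\) forces \(W_{t_0}\) into a set whose size is controlled by the drift displacement, of magnitude at most \(Rt_0\), plus \(\rho\). One then bounds \(\bbP(X_{t_0}\in B(y,\rho))/(v_d\rho^d)\) and lets \(\rho\to0\).

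The constant \(\Gamma\) may instead come from a cruder scheme: cover by cubes of side \(\rho\) and use that the Gaussian density is at most \((2\pi t_0)^{-d/2}\), giving a bound of the form \(\bigl(\sqrt{d}\,R+\ldots\bigr)^d\). Alternatively, bound the density per coordinate, using one-dimensional Gaussian tails \((2\pi t_0)^{-1/2}\) and a drift-induced factor \(R\). The latter would produce a product of terms \((R+(2\pi t_0)^{-1/2})\) together with the \(2^d/v_d\) conversion between cubes and balls.

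\emph{Main obstacle.} The key step, which I expect to be the hardest, is this uniform density bound with exactly the constant \(\Gamma\). The drift depends on the path, so \(X_{t_0}\) is not simply Gaussian plus a deterministic shift. A clean route is to compare with \(P_0\) through Girsanov on \([0,t_0]\), using the Cauchy--Schwarz bound \(E[\exp(\cdot)]\le e^{R^2t_0}\) and \(t_0=\log 2/(36R^2)\), which makes such exponential factors at most \(2^{1/36}\). Combined with the Gaussian density bound, this yields a constant of the required form, possibly after enlarging it to \(\Gamma\).
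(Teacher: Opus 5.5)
Your outer structure matches the paper's. You split at \(t_0\), bound the initial stretch by \(4R^2t_0\), apply Fubini on \([t_0,n\Delta]\), and reduce to a uniform bound \(\Gamma\) on the density of \(X_u\). To get that bound you condition on the compound-Poisson path, so that \(Y=X-J\) solves a Brownian SDE with drift \(b_j(\cdot+J_{u-})\) bounded by \(R\). Your Markov reduction to time \(t_0\) is harmless but not needed.

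\textbf{The gap.} The step you flag as the main obstacle is the entire content of the lemma, and none of your proposed routes produces an \(L^\infty\) density bound.
\begin{itemize}
\item \emph{Small balls.} The drift displacement \(Rt_0\) does not shrink with \(\rho\). You only get \(\bbP(X_{t_0}\in B(y,\rho))/(v_d\rho^d)\le(2\pi t_0)^{-d/2}(1+Rt_0/\rho)^d\), which diverges as \(\rho\to0\). Restricting the drift to a final window of length \(\varepsilon\) and optimizing (\(\varepsilon=\rho/R\)) still leaves a factor of order \(\rho^{-d/2}\).
\item \emph{Girsanov with Cauchy--Schwarz.} Against a driftless reference this gives \(\bbP(X_{t_0}\in A)\le e^{R^2t_0/2}\,\bbP_{\mathrm{ref}}(X_{t_0}\in A)^{1/2}\le C\abs{A}^{1/2}\). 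This is sublinear in the Lebesgue measure \(\abs{A}\), so it bounds no density; H\"older with other exponents gives \(\abs{A}^{1/q}\) with \(q>1\). A density bound along these lines would need the conditional expectation of the likelihood ratio given \(X_{t_0}=y\) controlled uniformly in \(y\). That is a Brownian-bridge computation you do not carry out.
\item \emph{Coordinatewise products.} Multiplying one-dimensional bounds of the form \(R+(2\pi t)^{-1/2}\) does not bound a joint density when the drift couples the coordinates. The one-dimensional bound is itself a nontrivial comparison result: the extremal drift is the one pushing toward the target point.
\end{itemize}

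The paper closes this step with the optimal density bounds of Ba\~nos and Kr\"uhner~\cite{ban-kru16} for It\^o processes with drift bounded by \(R\) and identity diffusion. They give \(q_{j,J}^t(x,z)\le\frac{2^d}{v_d}\bigl(R+(2\pi t)^{-1/2}\bigr)^d\) for every \(t>0\), uniformly in the path of \(J\). Since this is decreasing in \(t\), it is at most \(\Gamma\) for \(t\ge t_0\), and then \(p_j^t(x,y)=E_J[q_{j,J}^t(x,y-J_t)]\le\Gamma\).

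Without such an input, or a full proof of an equivalent bound, your argument does not close. Also, the statement fixes the constant \(\Gamma\), so a density bound with some other constant would not prove the lemma as stated. It would still suffice downstream after re-choosing \(T\) in Lemma~\ref{lem:packing-compositional-class}. Finally, \(t_0=\log 2/(36R^2)\) is chosen so that \(4R^2t_0=(\log 2)/9\) in the proof of Lemma~\ref{lem:levy-fano-family}(iv), not to control Girsanov exponential factors.
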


\begin{proof}
Conditionally on the compound-Poisson path \(J\), define
\[
    Y_u:=X_u-J_u,\qquad 0\le u\le n\Delta .
\]
Then \(Y\) is continuous and satisfies
\[
    dY_u=b_j(Y_u+J_{u-})\,du+dW_u .
\]
For each fixed path of \(J\), define
\[
    a_J(u,z):=b_j(z+J_{u-}) .
\]
Using \(J_{u-}\) makes \(a_J\) predictable in time, and replacing \(J_{u-}\)
by \(J_u\) would not change the drift integral because \(J_{u-}=J_u\) for
Lebesgue-a.e. \(u\). Moreover,
\[
    \abs{a_J(u,z)}\le R,
    \qquad u\ge0,\ z\in\bbR^d,
\]
uniformly in the path of \(J\). Hence, by the density bounds of Ba\~nos and
Kr\"uhner
\cite[Theorem~2.1, Theorem~2.2 and Corollary~2.3]{ban-kru16},
the conditional transition density \(q_{j,J}^t(x,\cdot)\) of \(Y_t\), given
\(Y_0=x\) and \(J\), satisfies
\[
    q_{j,J}^t(x,z)
    \le
    \frac{2^d}{v_d}
    \left(
        R+\frac{1}{\sqrt{2\pi t}}
    \right)^d,
    \qquad t>0,\quad z\in\bbR^d.
\]
In particular, for \(t\ge t_0\),
\[
    q_{j,J}^t(x,z)\le \Gamma .
\]
Since \(X_t=Y_t+J_t\), for every Borel set \(A\subset\bbR^d\),
\[
\begin{aligned}
    P_j^x(X_t\in A\mid J)
    &=
    \int_A q_{j,J}^t(x,y-J_t)\,dy,
\end{aligned}
\]
where \(P_j^x\) denotes the law starting from \(X_0=x\). Taking expectation
with respect to \(J\), we obtain a version of the transition density of \(X_t\)
under \(P_j^x\):
\[
    p_j^t(x,y)
    =
    E_J\left[
        q_{j,J}^t(x,y-J_t)
    \right].
\]
Therefore,
\begin{equation}
\label{eq:levy-transition-upper-minimax}
    p_j^t(x,y)\le \Gamma,
    \qquad
    t\ge t_0,\quad x,y\in\bbR^d .
\end{equation}

Using
\(\norminf{\widetilde f_{(j)}-\widetilde f_{(0)}} \le 2R\),
\eqref{eq:levy-transition-upper-minimax}, and Fubini's theorem, we obtain
\[
\begin{aligned}
& E_{j,n}\left[
    \int_0^{n\Delta}
    \abs{\widetilde f_{(j)}(X_u)-\widetilde f_{(0)}(X_u)}^2\,du
\right] \\
&\qquad\le
4R^2t_0
+
\int_{t_0}^{n\Delta}
\int_{\bbR^d}
\int_{\bbR^d}
\abs{\widetilde f_{(j)}(y)-\widetilde f_{(0)}(y)}^2
p_j^u(x,y)\,dy\,\Law_\eta(dx)\,du \\
&\qquad\le
4R^2t_0
+
\Gamma n\Delta
\norm{\widetilde f_{(j)}-\widetilde f_{(0)}
}_{L^2(\bbR^d)}^2 .
\end{aligned}
\]
This proves the lemma.
\end{proof}

\begin{lemma}
\label{lem:levy-fano-family}
For all sufficiently large \(n\), the functions
\(b_0,b_1,\ldots,b_M\) defined in
\eqref{eq:levy-fano-drift-family} satisfy the following properties:
\begin{enumerate}
    \item[\emph{(i)}]
    \[
        b_0,b_1,\ldots,b_M\in\ClassB^J.
    \]

    \item[\emph{(ii)}]
    There exists a constant \(\kappa>0\), independent of \(n\), such that
    \begin{equation}
    \label{eq:levy-fano-separation}
        \norm{b_j^i-b_k^i}_2^2
        \ge
        \kappa\phi_n,
        \qquad 0\le j<k\le M.
    \end{equation}

    \item[\emph{(iii)}]
    \begin{equation}
    \label{eq:levy-fano-ac}
        P_j\ll P_0,
        \qquad j=1,\ldots,M.
    \end{equation}

    \item[\emph{(iv)}]
    \begin{equation}
    \label{eq:levy-fano-kl}
        \frac1M\sum_{j=1}^M
        \int
        \log\left(\frac{dP_j}{dP_0}\right)dP_j
        \le
        \frac19\log M .
    \end{equation}
\end{enumerate}
\end{lemma}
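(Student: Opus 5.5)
We verify the four properties in turn, using the constructions just made and Lemmas~\ref{lem:packing-compositional-class}, \ref{lem:levy-girsanov-kl} and \ref{lem:levy-occupation-bound}.

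For (i), we must show that each \(b_j\) lies in \(\ClassB_0^J(d,r,K)\), is \(K\)-Lipschitz, and has \(b_j^i\one_{[0,1]^d}\in\ClassGb(q,\mathbf d,\mathbf t,\bm\beta,K)\).
\begin{itemize}
\item Boundedness follows from \eqref{eq:levy-fano-drift-bound}: \(\abs{b_j}_\infty\le\abs{b_j}\le R\le K_0\le K\).
\item The Lipschitz bound is exactly \eqref{eq:levy-fano-drift-lipschitz}.
\item For the drift condition, take \(\abs{x}>2d\ge R_0\). Then \(\widetilde f_{(j)}(x)=0\) and \(\psi(\abs{x})=1\), so \(b_j(x)=-rx/\abs{x}\) and \(\langle b_j(x),x\rangle=-r\abs{x}\).
\item For the compositional condition, note \(R_0\ge\sqrt d/2\), so \([0,1]^d\subset\{\abs{x}\le\sqrt d\}\). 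I should check that \(D\) vanishes on \([0,1]^d\). Indeed \(R_0=(\sqrt d+2d)/2\ge\sqrt d\), since \(2d\ge\sqrt d\). Hence on \([0,1]^d\) we have \(b_j^i=\widetilde f_{(j)}\), and \(b_j^i\one_{[0,1]^d}=f_{(j)}\) by Lemma~\ref{lem:packing-compositional-class}(ii).
\item Finally, \(f_{(j)}\in\ClassGb(\ldots,K_{\rm pack})\subset\ClassGb(\ldots,K)\), provided \(K\ge K_{\rm pack}\) and the classes are monotone in \(K\). Monotonicity holds since the norm constraint loosens and the interval constraint \(\abs{\ell_i}\vee\abs{u_i}\le K\) also loosens. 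The inequality \(K\ge K_{\rm pack}\) holds because \(K_0\ge K_{\rm pack}+L_D\).
\end{itemize}

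For (ii), on \([0,1]^d\) the difference \(b_j^i-b_k^i\) equals \(f_{(j)}-f_{(k)}\). Lemma~\ref{lem:packing-compositional-class}(i) with \(T=\Gamma\) then gives the separation with \(\kappa=\kappa_\Gamma\), which depends only on \(\Gamma\) and the class parameters, hence not on \(n\). Part (iii) is contained in Lemma~\ref{lem:levy-girsanov-kl}.

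For (iv), combine Lemma~\ref{lem:levy-girsanov-kl} and Lemma~\ref{lem:levy-occupation-bound} to get, for each \(j\),
\[
\int\log\Bigl(\frac{dP_j}{dP_0}\Bigr)dP_j\le 2R^2t_0+\frac{\Gamma}{2}\,n\Delta\norm{\widetilde f_{(j)}-\widetilde f_{(0)}}_{L^2(\bbR^d)}^2 .
\]
By Lemma~\ref{lem:packing-compositional-class}(i) with \(T=\Gamma\), the second term is at most \(\frac{\Gamma}{2}\cdot\frac{\log M}{9\Gamma}=\frac{\log M}{18}\). The choice \(t_0=\log2/(36R^2)\) gives \(2R^2t_0=\log2/18\le\log M/18\), since \(M\ge2\). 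Summing, each divergence is at most \(\log M/9\), and averaging over \(j\) yields \eqref{eq:levy-fano-kl}.

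The requirement \(n\Delta\ge t_0\) is used in Lemma~\ref{lem:levy-occupation-bound} to split the time integral at \(t_0\). The only delicate point is bookkeeping: ensuring \(D\equiv0\) on \([0,1]^d\) and that the class inclusion in \(K\) holds. Otherwise the argument is direct.
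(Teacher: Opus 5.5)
Your proposal is correct and follows the paper's proof essentially step by step. Parts (i)--(iii) come from the drift bounds, the vanishing of $D$ on $[0,1]^d$, and Lemmas~\ref{lem:packing-compositional-class} and~\ref{lem:levy-girsanov-kl}; part (iv) combines Lemma~\ref{lem:levy-occupation-bound} with the packing bound for $T=\Gamma$ and $4R^2t_0=\log 2/9\le\log M/9$ (you halve before summing, the paper after). Your explicit checks that $\sqrt d\le R_0\le 2d$ and $K\ge K_{\rm pack}$ fill in points the paper leaves implicit.
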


\begin{proof}
We first verify \emph{(i)} and \emph{(ii)}.
The bounds \eqref{eq:levy-fano-drift-bound} and
\eqref{eq:levy-fano-drift-lipschitz} already give the required boundedness
and global Lipschitz continuity.

For \(\abs{x}>2d\),
\(\widetilde f_{(j)}(x)=0\) and \(\psi(\abs{x})=1\), so
\(\langle b_j(x),x\rangle=-r\abs{x}\).
Moreover, since
\([0,1]^d\subset\{\abs{x}<R_0\}\),
we have \(D=0\) on \([0,1]^d\), and property \emph{(ii)} of
Lemma~\ref{lem:packing-compositional-class} yields
\[
    b_j^i\one_{[0,1]^d}
    =
    f_{(j)}
    \in
    \ClassGb
    (q,\mathbf d,\mathbf t,\bm\beta,K_{\rm pack})
    \subset
    \ClassGb
    (q,\mathbf d,\mathbf t,\bm\beta,K).
\]
Together with \eqref{eq:levy-fano-drift-bound} and
\eqref{eq:levy-fano-drift-lipschitz}, this shows that
\(b_0,\ldots,b_M\in\ClassB^J\),
proving \emph{(i)}.

Moreover, \(b_j^i-b_k^i=f_{(j)}-f_{(k)}\) on \([0,1]^d\),
and therefore property \emph{(i)} of
Lemma~\ref{lem:packing-compositional-class} yields
\[
    \norm{b_j^i-b_k^i}_2^2
    \ge
    \kappa_\Gamma\phi_n,
    \qquad 0\le j<k\le M.
\]
Thus \eqref{eq:levy-fano-separation} holds with
\(\kappa=\kappa_\Gamma>0\), proving \emph{(ii)}.

The boundedness and Lipschitz estimates above allow us to apply
Lemma~\ref{lem:levy-girsanov-kl}, which gives \(P_j\sim P_0\) and hence
\emph{(iii)}.

It remains to prove \emph{(iv)}. By
Lemma~\ref{lem:levy-occupation-bound} and the packing property with
\(T=\Gamma\),
\[
\begin{aligned}
&E_{j,n}\left[
    \int_0^{n\Delta}
    \abs{\widetilde f_{(j)}(X_u)-\widetilde f_{(0)}(X_u)}^2\,du
\right] \\
&\qquad\le
4R^2t_0
+
\Gamma n\Delta
\norm{\widetilde f_{(j)}-\widetilde f_{(0)}
}_{L^2(\bbR^d)}^2 \\
&\qquad\le
\frac{\log2}{9}
+
\frac{\log M}{9}
\le
\frac{2\log M}{9},
\end{aligned}
\]
where we used \(M\ge2\).
Hence, by Lemma~\ref{lem:levy-girsanov-kl},
\[
    \int
    \log\left(\frac{dP_j}{dP_0}\right)dP_j
    \le
    \frac{\log M}{9},
    \qquad j=1,\ldots,M.
\]
Averaging over \(j=1,\ldots,M\) proves \emph{(iv)}.
\end{proof}

\begin{proof}[Proof of Theorem~\ref{thm:minimax-lower-gaussian-cp}]
By Lemma~\ref{lem:levy-minimax-risk-reduction}, it is enough to prove the
corresponding lower bound for the \(L^2([0,1]^d)\)-estimation error.

Since every estimator based on the discrete observations is also admissible
under continuous observation,
\begin{equation}
\label{eq:levy-discrete-continuous-comparison}
\begin{aligned}
&\inf_{\widehat f_n^{\rm disc}}
  \sup_{b\in\ClassB^J}
  E_{b,n}\!\left[
      \norm{\widehat f_n^{\rm disc}-f_0}_2^2
  \right] \\
&\qquad\ge
\inf_{\widehat f_n^{\rm cont}}
  \sup_{b\in\ClassB^J}
  E_{b,n}\!\left[
      \norm{\widehat f_n^{\rm cont}-f_0}_2^2
  \right],
\end{aligned}
\end{equation}

By Lemma~\ref{lem:levy-fano-family}, for all sufficiently large \(n\) there
exist \(b_0,\ldots,b_M\in\ClassB^J\) satisfying
\eqref{eq:levy-fano-separation}, \eqref{eq:levy-fano-ac}, and
\eqref{eq:levy-fano-kl}. Therefore, Fano's method
\cite[Theorem~2.7]{tsy09} yields
\[
    \liminf_{n\to\infty}
    \phi_n^{-1}
    \inf_{\widehat f_n^{\rm cont}}
    \sup_{b\in\ClassB^J}
    E_{b,n}\left[
        \norm{\widehat f_n^{\rm cont}-f_0}_2^2
    \right]
    >0 .
\]
Together with \eqref{eq:levy-discrete-continuous-comparison}, this gives
\[
    \liminf_{n\to\infty}
    \phi_n^{-1}
    \inf_{\widehat f_n}
    \sup_{b\in\ClassB^J}
    \bbE\left[
        \norm{\widehat f_n-f_0}_2^2
    \right]
    >0 ,
\]
where the infimum is now over estimators based on the discrete observations.
Finally, Lemma~\ref{lem:levy-minimax-risk-reduction} gives
\[
    \liminf_{n\to\infty}
    \phi_n^{-1}
    \inf_{\widehat f_n}
    \sup_{b\in\ClassB^J}
    \Risk_n(\widehat f_n,f_0)
    >0 .
\]
This proves the theorem.
\end{proof}

\appendix
\renewcommand{\thetheorem}{\Alph{section}.\arabic{theorem}}

\section{Auxiliary \texorpdfstring{\(\psi_1\)}{psi1}-chaining bound}
\label{app:psi1-chaining}
We next isolate the chaining step used to pass from fixed-pair
\(\psi_1\)-increment bounds to a uniform local estimate.
For completeness, recall that \(\psi_1(x):=e^x-1\) for \(x\ge0\),
and that, for a real-valued random variable \(Y\), the associated Orlicz
norm is defined by
\[
    \norm{Y}_{\psi_1}
    :=
    \inf\left\{
        c>0:
        \bbE\left[
            \psi_1\left(\frac{\abs{Y}}{c}\right)
        \right]
        \le1
    \right\},
\]
with the convention that the infimum over the empty set is \(+\infty\).

The following result is a direct specialization of the maximal
inequality of van der Vaart and Wellner~\cite{van-wel96}.
We say that a stochastic process
\(\{X_t:t\in\mathcal T\}\) indexed by a semimetric space
\((\mathcal T,d)\) is separable if there exists a countable subset
\(\mathcal T_0\subset\mathcal T\) such that, for every \(a>0\),
\[
    \sup_{d(s,t)<a}\abs{X_s-X_t}
    =
    \sup_{\substack{s,t\in\mathcal T_0\\ d(s,t)<a}}
    \abs{X_s-X_t}
    \qquad\text{a.s.}
\]

\begin{lemma}\label{lem:psi1-chaining}
Let \((\mathcal T,d)\) be a semimetric space, and let
\(\{X_t:t\in\mathcal T\}\) be a separable stochastic process. Assume that
for some \(C>0\),
\[
    \norm{X_s-X_t}_{\psi_1}\le C\,d(s,t),
    \qquad s,t\in\mathcal T .
\]
Assume moreover that \(\mathcal T\) is not a singleton. Then, for every
\(\delta\) satisfying
\[
    0<\delta<\operatorname{diam}(\mathcal T,d),
    \qquad
    \operatorname{diam}(\mathcal T,d)
    :=
    \sup_{s,t\in\mathcal T}d(s,t),
\]
there exists a constant \(K>0\), depending only on \(C\), such that
\[
    \bbE\Bigl[
        \sup_{d(s,t)\le\delta}\abs{X_s-X_t}
    \Bigr]
    \le
    K\int_0^\delta
    \log \NN(\varepsilon,\mathcal T,d)\,d\varepsilon .
\]
\end{lemma}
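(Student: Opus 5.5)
The plan is to obtain the bound as a specialization of the general Orlicz-norm maximal inequality of van der Vaart and Wellner~\cite[Theorem~2.2.4]{van-wel96}, applied with \(\psi=\psi_1\). That theorem states: if \(\norm{X_s-X_t}_{\psi}\le C\,d(s,t)\) and the process is separable, then for all \(\eta,\delta>0\),
\[
\Bigl\|\sup_{d(s,t)\le\delta}\abs{X_s-X_t}\Bigr\|_{\psi}
\le K_0\Bigl[\int_0^\eta \psi^{-1}\bigl(D(\varepsilon,d)\bigr)\,d\varepsilon
+\delta\,\psi^{-1}\bigl(D(\eta,d)^2\bigr)\Bigr],
\]
where \(D\) is the packing number and \(K_0\) depends only on \(\psi\) and \(C\). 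For \(\psi_1\) we have \(\psi_1^{-1}(m)=\log(1+m)\).

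The key steps are as follows.

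\begin{enumerate}
\item Take \(\eta=\delta\). Use the elementary bound \(\bbE\abs{Y}\le \norm{Y}_{\psi_1}\), which follows from \(e^x-1\ge x\), to pass from the Orlicz norm to the expectation.
\item Replace packing numbers by covering numbers via \(D(\varepsilon,d)\le \NN(\varepsilon/2,\mathcal T,d)\). The change of variables \(\varepsilon\mapsto 2\varepsilon\) then only alters the constant, modulo the upper limit of integration; see the next step.
\item Absorb the second term. Since \(\log(1+m^2)\le 2\log(1+m)\), it suffices to show that \(\delta\log(1+\NN(\delta/2))\) is at most a constant times \(\int_0^\delta\log \NN(\varepsilon)\,d\varepsilon\). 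This uses the monotonicity of \(\varepsilon\mapsto\NN(\varepsilon)\):
\[
\int_0^{\delta}\log\NN(\varepsilon/2)\,d\varepsilon\ \ge\ \delta\,\log\NN(\delta/2).
\]
\end{enumerate}

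The hypotheses \(\delta<\operatorname{diam}(\mathcal T,d)\) and that \(\mathcal T\) is not a singleton serve to handle the \(\log(1+m)\) versus \(\log m\) discrepancy. Because \(\delta<\operatorname{diam}\), there exist \(s,t\) with \(d(s,t)>\delta\). Hence \(\NN(\varepsilon)\ge2\) for all \(\varepsilon\le\delta/2\), and \(\log(1+m)\le (\log3/\log2)\log m\) for \(m\ge2\).

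\textbf{Main obstacle.} The real care lies in matching the upper limit \(\delta\) of the integral and the factor \(1/2\) in the covering radius. After substitution, the integral becomes \(\int_0^{\delta/2}\log\NN(\varepsilon)\,d\varepsilon\), up to a factor \(2\), and this is dominated by \(\int_0^\delta\). The subtlety is that the boundary term involves \(\NN(\delta/2)\) rather than \(\NN(\delta)\). I would first try the monotonicity bound
\[
\delta\log\NN(\delta/2)\le 2\int_0^{\delta/2}\log\NN(\varepsilon)\,d\varepsilon,
\]
which holds since \(\log\NN(\varepsilon)\ge\log\NN(\delta/2)\) on \((0,\delta/2]\). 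Every remaining constant depends only on \(C\) and the universal \(K_0\). If the integral is infinite, the statement is trivial.
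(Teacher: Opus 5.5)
Your proposal is correct and follows essentially the same route as the paper. Both arguments apply Theorem~2.2.4 of van der Vaart and Wellner with \(\psi=\psi_1\) and \(\eta=\delta\), use the hypothesis \(\delta<\operatorname{diam}(\mathcal T,d)\) to make the relevant counts at least \(2\) so that \(\log(1+m)\) and \(\log(1+m^2)\) are bounded by constant multiples of \(\log m\), absorb the boundary term by monotonicity, pass from packing to covering numbers via \(D(\varepsilon,d)\le\NN(\varepsilon/2,\mathcal T,d)\) with a change of variables, and finish with an Orlicz-to-\(L^1\) comparison. The differences are cosmetic: you get \(\NN(\varepsilon,\mathcal T,d)\ge2\) for \(\varepsilon\le\delta/2\) directly from the triangle inequality, whereas the paper uses \(D(\delta,d)\ge2\) before switching to covering numbers, and you justify \(\bbE\abs{Y}\le\norm{Y}_{\psi_1}\) through \(e^x-1\ge x\) instead of citing the textbook exercise.
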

\begin{proof}[Proof of Lemma~\ref{lem:psi1-chaining}]
We prove a slightly stronger Orlicz-norm estimate and then deduce the
claimed expectation bound. For \(\varepsilon>0\), let
\(D(\varepsilon,d)\) denote the packing number of \((\mathcal T,d)\),
that is, the maximal cardinality of an \(\varepsilon\)-separated subset
of \(\mathcal T\), where \(\varepsilon\)-separated means that
\(d(s,t)>\varepsilon\) for every two distinct points \(s,t\). Recall the
standard comparison between covering and packing numbers:
\[
\NN(\varepsilon,\mathcal T,d)\le D(\varepsilon,d)
\le \NN(\varepsilon/2,\mathcal T,d),
\qquad \varepsilon>0 .
\]
Apply the maximal inequality of van der Vaart and Wellner
\cite[Theorem~2.2.4]{van-wel96} with \(\psi=\psi_1\) and with
the auxiliary radius in that theorem chosen as \(\delta\). Then there
exists a constant \(K_0>0\), depending only on \(C\), such that
\[
\norm{
\sup_{d(s,t)\le\delta}\abs{X_s-X_t}
}_{\psi_1}
\le
K_0\left[
\int_0^\delta \psi_1^{-1}\!\bigl(D(\varepsilon,d)\bigr)\,d\varepsilon
+
\delta\,\psi_1^{-1}\!\bigl(D(\delta,d)^2\bigr)
\right].
\]
Since \(\psi_1^{-1}(u)=\log(1+u)\), this gives
\[
\norm{
\sup_{d(s,t)\le\delta}\abs{X_s-X_t}
}_{\psi_1}
\le
K_0\left[
\int_0^\delta \log\!\bigl(1+D(\varepsilon,d)\bigr)\,d\varepsilon
+
\delta\log\!\bigl(1+D(\delta,d)^2\bigr)
\right].
\]
Because \(\mathcal T\) is not a singleton and
\(\delta<\operatorname{diam}(\mathcal T,d)\), there exist
\(s^*,t^*\in\mathcal T\) such that \(d(s^*,t^*)>\delta\), so
\(\{s^*,t^*\}\) is \(\delta\)-separated and hence
\(D(\delta,d)\ge2\). Since \(D(\varepsilon,d)\) is nonincreasing in
\(\varepsilon\), we also have \(D(\varepsilon,d)\ge2\) for every
\(0<\varepsilon\le\delta\). Therefore, for \(m\ge2\),
\[
\log(1+m)\le 2\log m,
\qquad
\log(1+m^2)\le 3\log m .
\]
Consequently,
\[
\norm{
\sup_{d(s,t)\le\delta}\abs{X_s-X_t}
}_{\psi_1}
\le
K_1\left[
\int_0^\delta \log D(\varepsilon,d)\,d\varepsilon
+
\delta\log D(\delta,d)
\right]
\]
for a constant \(K_1>0\) depending only on \(C\). Since
\(\log D(\varepsilon,d)\) is nonincreasing in \(\varepsilon\),
\[
\delta\log D(\delta,d)
\le
\int_0^\delta \log D(\varepsilon,d)\,d\varepsilon .
\]
Therefore, for some constant \(K_2>0\) depending only on \(C\),
\[
\norm{
\sup_{d(s,t)\le\delta}\abs{X_s-X_t}
}_{\psi_1}
\le
K_2
\int_0^\delta \log D(\varepsilon,d)\,d\varepsilon .
\]
Using \(D(\varepsilon,d)\le \NN(\varepsilon/2,\mathcal T,d)\), we obtain
\[
\begin{aligned}
\norm{
\sup_{d(s,t)\le\delta}\abs{X_s-X_t}
}_{\psi_1}
&\le
K_2\int_0^\delta
\log \NN(\varepsilon/2,\mathcal T,d)\,d\varepsilon  \\
&=
2K_2\int_0^{\delta/2}
\log \NN(u,\mathcal T,d)\,du  \\
&\le
2K_2\int_0^\delta
\log \NN(u,\mathcal T,d)\,du .
\end{aligned}
\]

Put
\[
Y:=\sup_{d(s,t)\le\delta}\abs{X_s-X_t}.
\]
By the standard Orlicz-norm comparison
\cite[Problems~2.2.4--2.2.5]{van-wel96}, applied with the fixed
Young function \(\psi_1(x)=e^x-1\), there exists a numerical constant
\(c_{\psi_1}>0\) such that
\[
    \bbE[Y]=\norm{Y}_1\le c_{\psi_1}\norm{Y}_{\psi_1}.
\]
Absorbing the fixed constant \(c_{\psi_1}\) into \(K\), we obtain the
claimed expectation bound with \(K\) depending only on \(C\).
\end{proof}

\section{Packing construction for the compositional class}
\label{app:packing-compositional-class}

\begin{proof}[Proof of Lemma~\ref{lem:packing-compositional-class}]
The construction below is adapted from the packing argument in the proof of
Schmidt-Hieber~\cite[Theorem~3]{sch-hie20}, with additional regularity
chosen for the global drift extension used below.

Set \(N:=n\Delta\). By the definition of the rate-determining index \(i^*\),
\(\phi_n=N^{-2\beta^{**}/(2\beta^{**}+t^*)}\) and
\(\beta^{**}=\beta^*B\). Choose
\[
    \mathsf K(x)
    :=
    c\exp\!\left\{-\frac{1}{x(1-x)}\right\}\one_{(0,1)}(x),
\]
where \(c>0\) is small enough that
\(0\le\mathsf K\le1\) and the derivatives and H\"older seminorms
appearing in the \(\beta^*\)-H\"older quantity are bounded by \(1\).
The function \(\mathsf K\) belongs to \(C_c^\infty(\bbR)\) and
is flat at \(0\) and \(1\); hence
\(\mathsf K^B\in C_c^\infty(\bbR)\).

Let \(\rho\ge1\), to be fixed later, and put
\(m:=\lfloor\rho N^{1/(2\beta^{**}+t^*)}\rfloor\) and \(h:=m^{-1}\).
For \(N\) large enough, \(m\ge1\) and \(h\le1\). Define
\[
    U
    :=
    \left\{
        u\in[0,1]^{t^*}:
        u_r\in\{0,h,\ldots,(m-1)h\},
        \ r=1,\ldots,t^*
    \right\},
\]
\[
    \psi_u(z)
    :=
    h^{\beta^*}
    \prod_{r=1}^{t^*}
    \mathsf K\!\left(\frac{z_r-u_r}{h}\right).
\]
Thus \(\abs{U}=m^{t^*}\). For every multi-index \(\alpha\) with
\(\abs{\alpha}_1<\beta^*\),
\(\norminf{\partial^\alpha\psi_u}
 \le C h^{\beta^*-\abs{\alpha}_1}\le C\);
for \(\abs{\alpha}_1=\underline{\beta^*}\), the corresponding
\((\beta^*-\underline{\beta^*})\)-H\"older seminorm is bounded by \(C\).
Hence the defining H\"older quantity of \(\psi_u\) is bounded uniformly
in \(u\) and \(N\).

For \(w=(w_u)_{u\in U}\in\{0,1\}^{\abs{U}}\), set
\[
    \varphi_w:=\sum_{u\in U}w_u\psi_u .
\]
The nonzero regions of distinct bumps are disjoint, and all derivatives
relevant to the H\"older norm vanish at their common cell boundaries.
Consequently,
\[
    \varphi_w
    \in
    \mathcal C_{t^*}^{\beta^*}([0,1]^{t^*},C),
    \qquad
    0\le\varphi_w\le1,
\]
with \(C\) independent of \(N,\rho\), and \(w\).

We now embed \(\varphi_w\) into the compositional structure. Taking
\([0,1]\) as the intermediate coordinate range, define
\[
    g_i(x)
    :=
    \begin{cases}
    (x_1,\ldots,x_{t^*},0,\ldots,0)^\top,
        & i<i^*,\\[1mm]
    \bigl(\varphi_w(x_1,\ldots,x_{t^*}),0,\ldots,0\bigr)^\top,
        & i=i^*,\\[1mm]
    (x_1^{\beta_i\wedge1},0,\ldots,0)^\top,
        & i>i^* .
    \end{cases}
\]
The dimension assumptions, together with \(t_i\le d_i\), imply that \(t^*\le d_j\) for \(j=0,\ldots,i^*\), so the first \(t^*\) coordinates can be carried to the \(i^*\)-th layer.
The carrier components are smooth,
the bottleneck component satisfies the uniform H\"older bound above,
and \(x\mapsto x^{\beta_i\wedge1}\) belongs to
\(\mathcal C_1^{\beta_i}([0,1],C)\). Hence there is a constant
\(K_{\rm comp}>0\), depending only on
\((q,\mathbf d,\mathbf t,\boldsymbol\beta)\), for which all these maps
satisfy the defining conditions of the compositional class.

Writing
\(g_w:=g_q\circ\cdots\circ g_0\), we therefore have
\[
\begin{gathered}
    g_w(x)
    =
    \varphi_w(x_1,\ldots,x_{t^*})^B
    =
    \sum_{u\in U}
    w_u h^{\beta^{**}}
    H\!\left(
        \frac{(x_1,\ldots,x_{t^*})-u}{h}
    \right),\\
    H(z)
    :=\prod_{r=1}^{t^*}\mathsf K(z_r)^B .
\end{gathered}
\]
For later use, define
\[
    F_w(z)
    :=
    \sum_{u\in U}
    w_u h^{\beta^{**}}
    H\!\left(\frac{z-u}{h}\right),
    \qquad z\in\bbR^{t^*}.
\]
Then \(g_w(x)=F_w(x_1,\ldots,x_{t^*})\) for \(x\in[0,1]^d\).
Here we used the nonnegativity and the pairwise disjoint nonzero sets
of the bumps, together with \(\beta^{**}=\beta^*B\). In particular,
\(H\in C_c^\infty(\bbR^{t^*})\) and
\[
    g_w
    \in
    \ClassG
    (q,\mathbf d,\mathbf t,\boldsymbol\beta,K_{\rm comp})
\]
uniformly in \(N,\rho\), and \(w\).

The representation above also gives the regularity needed for the global
extension. The nonzero sets of the final bumps are pairwise disjoint, and a change
of variables gives
\[
    \norminf{F_w}
    \le h^{\beta^{**}}\norminf{H}
    \le \norminf{H},
    \qquad
    \norminf{\nabla F_w}
    \le h^{\beta^{**}-1}\norminf{\nabla H} .
\]
The assumption \(\beta^{**}\ge1\) therefore yields a Lipschitz bound
uniform in \(N,\rho\), and \(w\).

Choose once and for all
\(\chi\in C_c^\infty(\bbR^d;[0,1])\) such that
\(\chi=1\) on a neighborhood of \([0,1]^d\) and
\(\supp(\chi)\subset\{x:\abs{x}\le R_0\}\). Define
\(G_w(x):=F_w(x_1,\ldots,x_{t^*})\) on \(\bbR^d\) and
\(\widetilde f_w:=\chi G_w\), and let
\(f_w:=\widetilde f_w\one_{[0,1]^d}\). Then
\(f_w|_{[0,1]^d}=g_w\), so
\(f_w\in \ClassGb (q,\mathbf d,\mathbf t,\boldsymbol\beta,K_{\rm comp})\).
Moreover,
\(\supp(\widetilde f_w)\subset\{x:\abs{x}\le R_0\}\) and
\[
    \Lip(\widetilde f_w)
    \le
    \norminf{\chi}\Lip(G_w)
    +\Lip(\chi)\norminf{G_w}
    \le C,
    \qquad
    \norminf{\widetilde f_w}\le C .
\]
These bounds are uniform in \(N,\rho\), and \(w\).

We next compute the \(L^2\)-geometry of the family. For
\(w,w'\in\{0,1\}^{\abs{U}}\), let
\(\operatorname{Ham}(w,w')
:=\sum_{u\in U}\one_{\{w_u\ne w'_u\}}\).
The final bumps have disjoint supports, and a change of variables gives
\[
    \norm{F_w-F_{w'}}_{L^2(\bbR^{t^*})}^2
    =
    \operatorname{Ham}(w,w')\,
    h^{2\beta^{**}+t^*}\norm{H}_2^2 .
\]
Since \(g_w-g_{w'}\) depends only on the first \(t^*\) coordinates,
the same identity holds on the unit cube:
\[
    \norm{f_w-f_{w'}}_{L^2([0,1]^d)}^2
    =
    \operatorname{Ham}(w,w')\,
    h^{2\beta^{**}+t^*}\norm{H}_2^2 .
\]

For \(m^{t^*}\) sufficiently large, the Varshamov--Gilbert bound
provides binary vectors
\(w^{(0)},\ldots,w^{(M)}\in\{0,1\}^{\abs{U}}\), with \(M\ge2\), such that
for universal constants \(c_{\rm H},c_{\rm V}>0\),
\[
    \operatorname{Ham}(w^{(j)},w^{(k)})
    \ge c_{\rm H}m^{t^*},
    \qquad
    \log M\ge c_{\rm V}m^{t^*},
    \qquad 0\le j<k\le M .
\]
It follows that
\[
\begin{aligned}
    \norm{f_{w^{(j)}}-f_{w^{(k)}}}_{L^2([0,1]^d)}^2
    &\ge
    c_{\rm H}\norm{H}_2^2 h^{2\beta^{**}} \\
    &\ge
    c_{\rm H}\norm{H}_2^2
    \rho^{-2\beta^{**}}\phi_n ,
\end{aligned}
\]
where the last inequality uses
\(m\le\rho N^{1/(2\beta^{**}+t^*)}\).

It remains to choose \(\rho\) so that the extensions satisfy the upper
packing bound. Since \(G_w-G_{w'}\) is supported in
\([0,1]^{t^*}\) in its active coordinates, Fubini's theorem yields
\[
    \norm{\widetilde f_w-\widetilde f_{w'}}_{L^2(\bbR^d)}^2
    \le
    C_{\rm ext}
    \norm{f_w-f_{w'}}_{L^2([0,1]^d)}^2,
    \qquad
    C_{\rm ext}:=1\vee(2R_0)^{d-t^*}.
\]
Using \(\operatorname{Ham}(w,w')\le m^{t^*}\), we therefore obtain
\[
\begin{aligned}
    N\norm{\widetilde f_w-\widetilde f_{w'}}_{L^2(\bbR^d)}^2
    &\le
    C_{\rm ext}\norm{H}_2^2
    N m^{t^*}h^{2\beta^{**}+t^*}  \\
    &=
    C_{\rm ext}\norm{H}_2^2Nh^{2\beta^{**}} .
\end{aligned}
\]
On the other hand, \(\log M\ge c_{\rm V}m^{t^*}\). Hence it is enough
that
\[
    Nh^{2\beta^{**}+t^*}
    \le
    \frac{c_{\rm V}}{9T C_{\rm ext}\norm{H}_2^2}.
\]
Let \(p:=2\beta^{**}+t^*\). For each \(T>0\), choose
\(\rho_T\ge1\) sufficiently large that
\[
    2^p\rho_T^{-p}
    \le
    \frac{c_{\rm V}}{9T C_{\rm ext}\norm{H}_2^2}.
\]
If \(N\) is large enough that
\(m=\lfloor\rho_TN^{1/p}\rfloor
 \ge \rho_TN^{1/p}/2\),
then \(Nh^p\le2^p\rho_T^{-p}\), and consequently
\[
    N\norm{\widetilde f_{w^{(j)}}-\widetilde f_{w^{(k)}}
    }_{L^2(\bbR^d)}^2
    \le
    \frac{\log M}{9T},
    \qquad 0\le j<k\le M .
\]
With
\(\kappa_T := c_{\rm H}\norm{H}_2^2\rho_T^{-2\beta^{**}}\),
the lower bound above becomes
\[
    \norm{f_{w^{(j)}}-f_{w^{(k)}}}_{L^2([0,1]^d)}^2
    \ge
    \kappa_T\phi_n .
\]

Finally, choose \(N_T\) large enough that the preceding integer-rounding
condition holds and the Varshamov--Gilbert construction is applicable
whenever \(N\ge N_T\). Relabel
\(f_{(j)}:=f_{w^{(j)}}\) and
\(\widetilde f_{(j)}:=\widetilde f_{w^{(j)}}\),
\(j=0,\ldots,M\). Taking \(K_{\rm pack}\) larger than
\(K_{\rm comp}\) and the uniform Lipschitz constant above, and taking
\(C_{\rm pack}\) larger than the uniform supremum bound, gives all the
claims of the lemma. Here \(K_{\rm pack}\) and \(C_{\rm pack}\) depend
only on \((q,\mathbf d,\mathbf t,\boldsymbol\beta)\), whereas
\(\kappa_T\) and \(N_T\) may additionally depend on \(T\).
\end{proof}

\section*{Acknowledgements}
Teppei Ogihara was supported by Japan Society for the Promotion of Science KAKENHI Grant Number JP25K03083.
ChatGPT (OpenAI) was used to assist with manuscript drafting and editing,
\LaTeX{} formatting, literature searches, and mathematical discussions and
cross-checks. The authors reviewed the AI-assisted material, verified the
proofs through their own analysis and detailed discussions, and take full
responsibility for the manuscript.

\bibliographystyle{plainnat}
\bibliography{references_Levy_DNN}

@book {app09,
    AUTHOR = {Applebaum, David},
     TITLE = {L\'{e}vy processes and stochastic calculus},
    SERIES = {Cambridge Studies in Advanced Mathematics},
    VOLUME = {116},
   EDITION = {Second},
 PUBLISHER = {Cambridge University Press, Cambridge},
      YEAR = {2009},
     PAGES = {xxx+460},
      ISBN = {978-0-521-73865-1},
   MRCLASS = {60-02 (60G44 60G51 60H05 60H30 60J35 60J60 60J65)},
  MRNUMBER = {2512800},
MRREVIEWER = {Dora Sele\v{s}i},
       DOI = {10.1017/CBO9780511809781},
       URL = {https://doi.org/10.1017/CBO9780511809781},
}

@article {ban-kru16,
    AUTHOR = {Ba\~nos, David and Kr\"uhner, Paul},
     TITLE = {Optimal density bounds for marginals of {I}t\^o processes},
   JOURNAL = {Commun. Stoch. Anal.},
  FJOURNAL = {Communications on Stochastic Analysis},
    VOLUME = {10},
      YEAR = {2016},
    NUMBER = {2},
     PAGES = {131--150},
       DOI = {10.31390/cosa.10.2.01},
       URL = {https://doi.org/10.31390/cosa.10.2.01},
}

@article {bar98,
    AUTHOR = {Barndorff-Nielsen, Ole E.},
     TITLE = {Processes of normal inverse {G}aussian type},
   JOURNAL = {Finance Stoch.},
  FJOURNAL = {Finance and Stochastics},
    VOLUME = {2},
      YEAR = {1998},
    NUMBER = {1},
     PAGES = {41--68},
       DOI = {10.1007/s007800050032},
       URL = {https://doi.org/10.1007/s007800050032},
}

@article {car-etal02,
    AUTHOR = {Carr, Peter and Geman, H\'{e}lyette and Madan, Dilip B. and Yor, Marc},
     TITLE = {The fine structure of asset returns: an empirical investigation},
   JOURNAL = {J. Bus.},
  FJOURNAL = {The Journal of Business},
    VOLUME = {75},
      YEAR = {2002},
    NUMBER = {2},
     PAGES = {305--332},
       DOI = {10.1086/338705},
       URL = {https://doi.org/10.1086/338705},
}

@article {com-etal07,
    AUTHOR = {Comte, Fabienne and Genon-Catalot, Valentine and Rozenholc, Yves},
     TITLE = {Penalized nonparametric mean square estimation of the coefficients of diffusion processes},
   JOURNAL = {Bernoulli},
  FJOURNAL = {Bernoulli},
    VOLUME = {13},
      YEAR = {2007},
    NUMBER = {2},
     PAGES = {514--543},
       DOI = {10.3150/07-BEJ5173},
       URL = {https://doi.org/10.3150/07-BEJ5173},
}

@article {dal-rei07,
    AUTHOR = {Dalalyan, Arnak S. and Rei{\ss}, Markus},
     TITLE = {Asymptotic statistical equivalence for ergodic diffusions: the multidimensional case},
   JOURNAL = {Probab. Theory Related Fields},
  FJOURNAL = {Probability Theory and Related Fields},
    VOLUME = {137},
      YEAR = {2007},
    NUMBER = {1-2},
     PAGES = {25--47},
       DOI = {10.1007/s00440-006-0502-7},
       URL = {https://doi.org/10.1007/s00440-006-0502-7},
}

@article {dal05,
    AUTHOR = {Dalalyan, Arnak S.},
     TITLE = {Sharp adaptive estimation of the drift function for ergodic diffusions},
   JOURNAL = {Ann. Statist.},
  FJOURNAL = {The Annals of Statistics},
    VOLUME = {33},
      YEAR = {2005},
    NUMBER = {6},
     PAGES = {2507--2528},
       DOI = {10.1214/009053605000000615},
       URL = {https://doi.org/10.1214/009053605000000615},
}

@article {dex25,
    AUTHOR = {Dexheimer, Niklas},
     TITLE = {Adaptive nonparametric drift estimation for multivariate jump diffusions under sup-norm risk},
   JOURNAL = {Stochastic Process. Appl.},
  FJOURNAL = {Stochastic Processes and their Applications},
    VOLUME = {190},
      YEAR = {2025},
     PAGES = {104741},
       DOI = {10.1016/j.spa.2025.104741},
       URL = {https://doi.org/10.1016/j.spa.2025.104741},
}

@article {gob-etal04,
    AUTHOR = {Gobet, Emmanuel and Hoffmann, Marc and Rei{\ss}, Markus},
     TITLE = {Nonparametric estimation of scalar diffusions based on low frequency data},
   JOURNAL = {Ann. Statist.},
  FJOURNAL = {The Annals of Statistics},
    VOLUME = {32},
      YEAR = {2004},
    NUMBER = {5},
     PAGES = {2223--2253},
       DOI = {10.1214/009053604000000797},
       URL = {https://doi.org/10.1214/009053604000000797},
}

@inproceedings {ima-fuk19,
    AUTHOR = {Imaizumi, Masaaki and Fukumizu, Kenji},
     TITLE = {Deep neural networks learn non-smooth functions effectively},
 BOOKTITLE = {Proceedings of the Twenty-Second International Conference on Artificial Intelligence and Statistics},
    SERIES = {Proceedings of Machine Learning Research},
    VOLUME = {89},
      YEAR = {2019},
     PAGES = {869--878},
 PUBLISHER = {PMLR},
       URL = {https://proceedings.mlr.press/v89/imaizumi19a.html},
}

@book {jac-shi03,
    AUTHOR = {Jacod, Jean and Shiryaev, Albert N.},
     TITLE = {Limit theorems for stochastic processes},
    SERIES = {Grundlehren der mathematischen Wissenschaften [Fundamental
              Principles of Mathematical Sciences]},
    VOLUME = {288},
   EDITION = {Second},
 PUBLISHER = {Springer-Verlag, Berlin},
      YEAR = {2003},
     PAGES = {xx+661},
      ISBN = {3-540-43932-3},
   MRCLASS = {60-02 (60F17 60G48 60H05)},
  MRNUMBER = {1943877},
MRREVIEWER = {Dominique L\'{e}pingle},
       DOI = {10.1007/978-3-662-05265-5},
       URL = {https://doi.org/10.1007/978-3-662-05265-5},
}

@article {koh-lan21,
    AUTHOR = {Kohler, Michael and Langer, Sophie},
     TITLE = {On the rate of convergence of fully connected deep neural network regression estimates},
   JOURNAL = {Ann. Statist.},
  FJOURNAL = {The Annals of Statistics},
    VOLUME = {49},
      YEAR = {2021},
    NUMBER = {4},
     PAGES = {2231--2249},
       DOI = {10.1214/20-AOS2034},
       URL = {https://doi.org/10.1214/20-AOS2034},
}

@book {kul18,
    AUTHOR = {Kulik, Alexei},
     TITLE = {Ergodic behavior of {M}arkov processes: with applications to limit theorems},
    SERIES = {De Gruyter Studies in Mathematics},
    VOLUME = {67},
 PUBLISHER = {De Gruyter, Berlin/Boston},
      YEAR = {2018},
       DOI = {10.1515/9783110458930},
       URL = {https://doi.org/10.1515/9783110458930},
}

@article {mas07,
    AUTHOR = {Masuda, Hiroki},
     TITLE = {Ergodicity and exponential {$\beta$}-mixing bounds for
              multidimensional diffusions with jumps},
   JOURNAL = {Stochastic Process. Appl.},
  FJOURNAL = {Stochastic Processes and their Applications},
    VOLUME = {117},
      YEAR = {2007},
    NUMBER = {1},
     PAGES = {35--56},
      ISSN = {0304-4149},
     CODEN = {STOPB7},
   MRCLASS = {60J60},
  MRNUMBER = {2287102 (2008j:60194)},
       DOI = {10.1016/j.spa.2006.04.010},
       URL = {http://dx.doi.org/10.1016/j.spa.2006.04.010},
}

@article {men-etal21,
    AUTHOR = {Menozzi, St\'{e}phane and Pesce, Antonello and Zhang, Xicheng},
     TITLE = {Density and gradient estimates for non degenerate {B}rownian {SDE}s with unbounded measurable drift},
   JOURNAL = {J. Differential Equations},
  FJOURNAL = {Journal of Differential Equations},
    VOLUME = {272},
      YEAR = {2021},
     PAGES = {330--369},
       DOI = {10.1016/j.jde.2020.09.004},
       URL = {https://doi.org/10.1016/j.jde.2020.09.004},
}

@article {mie21,
    AUTHOR = {Mies, Fabian},
     TITLE = {Estimation of state-dependent jump activity and drift for {M}arkovian semimartingales},
   JOURNAL = {J. Statist. Plann. Inference},
  FJOURNAL = {Journal of Statistical Planning and Inference},
    VOLUME = {210},
      YEAR = {2021},
     PAGES = {114--140},
       DOI = {10.1016/j.jspi.2020.04.009},
       URL = {https://doi.org/10.1016/j.jspi.2020.04.009},
}

@article {nic-ray20,
    AUTHOR = {Nickl, Richard and Ray, Kolyan},
     TITLE = {Nonparametric statistical inference for drift vector fields of multi-dimensional diffusions},
   JOURNAL = {Ann. Statist.},
  FJOURNAL = {The Annals of Statistics},
    VOLUME = {48},
      YEAR = {2020},
    NUMBER = {3},
     PAGES = {1383--1408},
       DOI = {10.1214/19-AOS1851},
       URL = {https://doi.org/10.1214/19-AOS1851},
}

@article {oga-koi24,
    AUTHOR = {Oga, Akihiro and Koike, Yuta},
     TITLE = {Drift estimation for a multi-dimensional diffusion process using deep neural networks},
   JOURNAL = {Stochastic Process. Appl.},
  FJOURNAL = {Stochastic Processes and their Applications},
    VOLUME = {170},
      YEAR = {2024},
     PAGES = {104240},
       DOI = {10.1016/j.spa.2023.104240},
       URL = {https://doi.org/10.1016/j.spa.2023.104240},
}

@article {qia-etal03,
    AUTHOR = {Qian, Zhongmin and Russo, Francesco and Zheng, Weian},
     TITLE = {Comparison theorem and estimates for transition probability densities of diffusion processes},
   JOURNAL = {Probab. Theory Related Fields},
  FJOURNAL = {Probability Theory and Related Fields},
    VOLUME = {127},
      YEAR = {2003},
    NUMBER = {3},
     PAGES = {388--406},
       DOI = {10.1007/s00440-003-0291-1},
       URL = {https://doi.org/10.1007/s00440-003-0291-1},
}

@article {ros07,
    AUTHOR = {Rosi\'{n}ski, Jan},
     TITLE = {Tempering stable processes},
   JOURNAL = {Stochastic Process. Appl.},
  FJOURNAL = {Stochastic Processes and their Applications},
    VOLUME = {117},
      YEAR = {2007},
    NUMBER = {6},
     PAGES = {677--707},
       DOI = {10.1016/j.spa.2006.10.003},
       URL = {https://doi.org/10.1016/j.spa.2006.10.003},
}

@article {sch-hie20,
    AUTHOR = {Schmidt-Hieber, Johannes},
     TITLE = {Nonparametric regression using deep neural networks with {ReLU} activation function},
   JOURNAL = {Ann. Statist.},
  FJOURNAL = {The Annals of Statistics},
    VOLUME = {48},
      YEAR = {2020},
    NUMBER = {4},
     PAGES = {1875--1897},
       DOI = {10.1214/19-AOS1875},
       URL = {https://doi.org/10.1214/19-AOS1875},
}

@article {sch14,
    AUTHOR = {Schmisser, \'{E}meline},
     TITLE = {Non-parametric adaptive estimation of the drift for a jump diffusion process},
   JOURNAL = {Stochastic Process. Appl.},
  FJOURNAL = {Stochastic Processes and their Applications},
    VOLUME = {124},
      YEAR = {2014},
    NUMBER = {1},
     PAGES = {883--914},
       DOI = {10.1016/j.spa.2013.09.012},
       URL = {https://doi.org/10.1016/j.spa.2013.09.012},
}

@inproceedings {suz19,
    AUTHOR = {Suzuki, Taiji},
     TITLE = {Adaptivity of deep {ReLU} network for learning in {B}esov and mixed smooth {B}esov spaces: optimal rate and curse of dimensionality},
 BOOKTITLE = {International Conference on Learning Representations},
      YEAR = {2019},
       URL = {https://openreview.net/forum?id=H1ebTsActm},
}

@book {tsy09,
    AUTHOR = {Tsybakov, Alexandre B.},
     TITLE = {Introduction to nonparametric estimation},
    SERIES = {Springer Series in Statistics},
 PUBLISHER = {Springer, New York},
      YEAR = {2009},
     PAGES = {x+214},
       DOI = {10.1007/b13794},
       URL = {https://doi.org/10.1007/b13794},
}

@book {van-wel96,
    AUTHOR = {van der Vaart, Aad W. and Wellner, Jon A.},
     TITLE = {Weak convergence and empirical processes: with applications to statistics},
    SERIES = {Springer Series in Statistics},
 PUBLISHER = {Springer, New York},
      YEAR = {1996},
     PAGES = {xvi+508},
       DOI = {10.1007/978-1-4757-2545-2},
       URL = {https://doi.org/10.1007/978-1-4757-2545-2},
}

@article {fan-etal22,
    AUTHOR = {Fang, Cheng and Lu, Yubin and Gao, Ting and Duan, Jinqiao},
     TITLE = {An end-to-end deep learning approach for extracting stochastic dynamical systems with {$\alpha$}-stable {L}\'{e}vy noise},
   JOURNAL = {Chaos},
  FJOURNAL = {Chaos: An Interdisciplinary Journal of Nonlinear Science},
    VOLUME = {32},
      YEAR = {2022},
    NUMBER = {6},
     PAGES = {063112},
       DOI = {10.1063/5.0089832},
       URL = {https://doi.org/10.1063/5.0089832},
}

@article {ram-sun26,
    AUTHOR = {Ram\'{i}rez-Gonz\'{a}lez, Jos\'{e} Hermenegildo and Sun, Ying},
     TITLE = {Neural networks for the {T}amed--{M}ilstein approximation of {SDE}s with additive symmetric jumps},
   JOURNAL = {Chaos},
  FJOURNAL = {Chaos: An Interdisciplinary Journal of Nonlinear Science},
    VOLUME = {36},
      YEAR = {2026},
    NUMBER = {1},
     PAGES = {013122},
       DOI = {10.1063/5.0289767},
       URL = {https://doi.org/10.1063/5.0289767},
}

@article {sch13,
    AUTHOR = {Schmisser, \'{E}meline},
     TITLE = {Penalized nonparametric drift estimation for a multidimensional diffusion process},
   JOURNAL = {Statistics},
    VOLUME = {47},
      YEAR = {2013},
    NUMBER = {1},
     PAGES = {61--84},
       DOI = {10.1080/02331888.2011.591931},
       URL = {https://doi.org/10.1080/02331888.2011.591931},
}

@incollection{sor91,
  author    = {S{\o}rensen, Michael},
  title     = {Likelihood methods for diffusions with jumps},
  booktitle = {Statistical Inference in Stochastic Processes},
  editor    = {Prabhu, N. U. and Basawa, I. V.},
  pages     = {67--105},
  publisher = {Marcel Dekker},
  address   = {New York},
  year      = {1991}
}
\end{document}